\newenvironment{changemargin}[2]{%
\begin{list}{}{%
\setlength{\topsep}{0pt}%
\setlength{\leftmargin}{#1}%
\setlength{\rightmargin}{#2}%
\setlength{\listparindent}{\parindent}%
\setlength{\itemindent}{\parindent}%
\setlength{\parsep}{\parskip}%
}%
\item[]}{\end{list}}
\newcommand{\leqnomode}{\tagsleft@true\let\veqno\@@leqno}
\newcommand{\reqnomode}{\tagsleft@false\let\veqno\@@eqno}
\newcolumntype{L}{>{\scriptstyle}l}
\newcolumntype{C}{>{\scriptstyle}c}
\newcolumntype{R}{>{\scriptstyle}r}
\newenvironment{mysubarray}{%
  \scriptstyle
  \setlength\arraycolsep{0pt}%
  \setlength\extrarowheight{-1ex}
  
  \begin{array}{RCL}}{\end{array}}
\DeclareMathAlphabet{\mathpzc}{OT1}{pzc}{m}{it}
\let\oldtocsection=\tocsection
\let\oldtocsubsection=\tocsubsection
\let\oldtocsubsubsection=\tocsubsubsection
\renewcommand{\tocsection}[2]{\hspace{0em}\oldtocsection{#1}{#2}}
\renewcommand{\tocsubsection}[2]{\hspace{1em}\oldtocsubsection{#1}{#2}}
\renewcommand{\tocsubsubsection}[2]{\hspace{2em}\oldtocsubsubsection{#1}{#2}}
\newtheorem{bigthm}{Theorem}
\newtheorem{bigcor}[bigthm]{Corollary}
\newtheorem{thm}{Theorem}[section]
\newtheorem{lem}[thm]{Lemma}
\newtheorem{prop}[thm]{Proposition}
\newtheorem{cor}[thm]{Corollary}
\newtheorem{question}[thm]{Question}
\newtheorem{problem}[thm]{Problem}
\theoremstyle{definition}
\newtheorem{dfn}[thm]{Definition}
\newtheorem*{definition}{Definition}
\theoremstyle{remark}
\newtheorem{example}[thm]{Example}
\newtheorem{rem}[thm]{Remark}
\newtheorem{construction}[thm]{Construction}
\renewcommand{\max}{{\mathrm{max}}}
\renewcommand{\min}{{\mathrm{min}}}
\newcommand{\pref}[2]{\hyperref[#2]{#1 \ref*{#2}}}
\newcommand{\id}{\ensuremath{\operatorname{id}}}
\newcommand{\Spin}{\ensuremath{\operatorname{Spin}}}
\newcommand{\oH}{H}
\newcommand{\diff}{\ensuremath{\operatorname{Diff}^{\scaleobj{0.8}{+}}}}
\newcommand{\Diff}{\ensuremath{\operatorname{Diff}}}
\newcommand{\blockdiff}{\ensuremath{\widetilde{\operatorname{Diff}}^{{\scaleobj{0.8}{+}}}}}
\newcommand{\diffs}{\ensuremath{\operatorname{Diff}^{\scaleobj{0.8}{\Spin}}}}
\newcommand{\maps}{\ensuremath{\operatorname{maps}}}
\newcommand{\sign}{\ensuremath{\operatorname{sign}}}
\newcommand{\sing}{\ensuremath{\operatorname{Sing}}}
\newcommand{\colim}{\ensuremath{\operatorname{colim}}}
\newcommand{\GL}{\ensuremath{\operatorname{GL}}}
\newcommand{\ahat}{\ensuremath{{\hat\calA}}}
\newcommand{\bo}{\ensuremath{\operatorname{BO}}}
\newcommand{\bg}{\ensuremath{\operatorname{BG}}}
\newcommand{\go}{\ensuremath{\operatorname{G/O}}}
\newcommand{\thom}{\ensuremath{\operatorname{Th}}}
\newcommand{\too}{\longrightarrow}
\newcommand{\rk}{\mathrm{rank}}
\newcommand{\pr}{\ensuremath{\operatorname{pr}}}
\newcommand{\hur}{\mathrm{hur}}
\newcommand{\fib}{\mathrm{Fib}^h}
\newcommand{\blockfib}[1]{\widetilde{\mathrm{Fib}}^h_{#1}}
\newcommand{\congarrow}{\overset{\cong}\longrightarrow}
\newcommand{\embeds}{\hookrightarrow}
\newcommand{\actson}{\curvearrowright}
\DeclarePairedDelimiter{\scpr}{\langle}{\rangle}
\DeclarePairedDelimiter\ceil{\lceil}{\rceil}
\DeclarePairedDelimiter\floor{\lfloor}{\rfloor}
\newcommand{\haut}{\ensuremath{\operatorname{hAut}}}
\newcommand{\blockhaut}{\ensuremath{\widetilde{\operatorname{hAut}}}}
\newcommand{\calR}{\mathcal{R}}
\newcommand{\calA}{\mathcal{A}}
\newcommand{\calN}{\mathcal{N}}
\newcommand{\calS}{\mathcal{S}}
\newcommand{\calE}{\mathcal{E}}
\newcommand{\calO}{\mathcal{O}}
\newcommand{\calL}{\mathcal{L}}
\newcommand{\calI}{\mathcal{I}}
\newcommand{\bbN}{\mathbb{N}}
\newcommand{\bbC}{\mathbb{C}}
\newcommand{\bbZ}{\mathbb{Z}}
\newcommand{\bbQ}{\mathbb{Q}}
\newcommand{\bfQ}{\mathbb{Q}}
\newcommand{\bbR}{\mathbb{R}}
\newcommand{\Reals}{\mathbb{R}}
\newcommand{\op}[1]{\mathbb{OP}^{#1}}
\newcommand{\hp}[1]{\mathbb{HP}^{#1}}
\newcommand{\cp}[1]{\mathbb{CP}^{#1}}
\newcommand{\bdiff}{B\!\diff}
\newcommand{\ch}{{\mathrm{ch}}}
\newcommand{\ph}{{\mathrm{ph}}}
\newcommand{\ko}{{\mathrm{KO}}}
\newcommand{\pt}{{\mathrm{pt}}}
\newcommand{\psec}{{\mathrm{Sec}>0}}
\newcommand{\nnsec}{{\mathrm{Sec}\ge0}}
\newcommand{\prc}{{\mathrm{Ric}>0}}
\newcommand{\psc}{{\mathrm{scal}>0}}
\renewcommand{\hom}{\mathrm{Hom}}
\begin{document}
\author[Georg Frenck]{Georg Frenck\\ Appendix with Jens Reinhold}
\email{\href{mailto:georg.frenck@math.uni-augsburg.de}{georg.frenck@math.uni-augsburg.de}}
\email{\href{mailto:math@frenck.net}{math@frenck.net}}
\address{{Institut für Mathematik}, {Universität Augsburg},{Universitätsstraße 14}, {Augsburg}, {86159}, {Bavaria}, {Germany}}

\keywords{Fiber bundles, characteristic classes, positive curvature, block bundles, surgery theory, rational cobordism classes.}

\subjclass[2010]{53C21,55R40, 57R20, 57R22, 58D17, 58D05.}

\thanks{G.~F.~is supported by the DFG (German Research Foundation) -- 281869850 (RTG 2229) and through the Priority Programme “Geometry at Infinity” (SPP 2026, ZE 1123/2-2).}

\title[manifold bundles over spheres and positive curvature]{Characteristic numbers of manifold bundles over spheres and positive curvature via block bundles}

\begin{abstract} 
	Given a simply connected manifold $M$, we completely determine which rational monomial Pontryagin numbers are attained by fiber homotopy trivial $M$-bundles over the $k$-sphere, provided that $k$ is small compared to the dimension of $M$. Furthermore we study the vector space of rational cobordism classes represented by such bundles. We give upper and lower bounds on its dimension and we construct manifolds for which these bounds are attained. The proof is based on the classical approach to studying diffeomorphism groups via block bundles and surgery theory and we make use of ideas developed by Krannich--Kupers--Randal-Williams.
	
	As an application, we show the existence of elements of infinite order in the homotopy groups of the spaces of positive Ricci and positive sectional curvature, provided that $M$ is $\Spin$, has a non-trivial rational Pontryagin class and admits such a metric. This is done by constructing $M$-bundles over spheres with non-vanishing $\ahat$-genus. Furthermore, we give a vanishing theorem for generalised Morita--Miller--Mumford classes for fiber homotopy trivial bundles over spheres.

	In the appendix co-authored by Jens Reinhold it is (partially) determined which classes of the rational oriented cobordism ring contain an element that fibers over a sphere of a given dimension.
\end{abstract}

\maketitle


\section{Introduction}
\noindent Let $M$ be a closed oriented manifold of dimension $d\ge5$. In this appendix we investigate the following question: Given an integer $k\ge1$ and a universal characteristic class $c\in H^{d+k}(\bo;\bbQ)$\footnote{Since $H^\ast(\bo;\bbQ)$ is concentrated in degrees divisible by $4$, we restrict to the case $d+k=4m$ throughout this article.}, does there exist a fiber bundle $M\to E\to S^k$ such that  $\scpr{c(E),[E]}\not=0$? If it does, then $c$ is called \emph{spherical for $M$}. Furthermore, $c$ is called \emph{$h$-spherical for $M$}, if $E$ can be chosen to be fiber homotopy trivial, that is $E$ comes equipped with a homotopy equivalence $E\simeq M\times S^k$ over $S^k$. Obviously, $h$-spherical classes are spherical. The following is our main result.

\begin{bigthm}\label{thm:main}
	Let $M^d$ be a simply connected, closed manifold and let $k$ be such that $1\le k\le \min(\frac{d+2}3, \frac{d-3}2)$ and $d+k=4m$.
	\begin{enumerate}
		\item A monomial $p =p_{i_1}\cup\cdots\cup p_{i_n}\not =p_{m}$ in universal rational Pontryagin classes of total degree ${d+k}$ is $h$-spherical for $M$ \emph{if and only if} there exists an $\ell\in\{1,\dots,n\}$ such that 
		\[p_{i_1}(TM)\cup\cdots\cup\widehat{p_{i_\ell}(TM)}\cup\cdots\cup p_{i_n}(TM)\not=0.\]
		\item Let $M$ admit a nontrivial rational Pontryagin class and let $p_i(TM)$ have the lowest degree among these. Then there exists a fiber bundle $E\to S^k$ such that 
		\[\scpr{p_i(TE)\cup p_{m-i}(TE),[E]} \not=0\not=\scpr{p_{m}(TE),[E]}\]
		are the \emph{only} nonzero monomial Pontryagin numbers of $E$. In particular the following are equivalent: 
		\begin{enumerate}
			\item The class $p_m$ is $h$-spherical for $M$ 
			\item The class $p_m$ is spherical for $M$
			\item $M$ admits some nontrivial rational Pontryagin class.
		\end{enumerate}
		\item For every $\ell>n\ge3$ the class ${p_1^{n+\ell}}$ is spherical but not $h$-spherical for $\cp{n}\times\cp{2\ell}$.	
	\end{enumerate} 
\end{bigthm}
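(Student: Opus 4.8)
The plan is to prove the two halves separately, both by direct computation. Write $M=\cp{n}\times\cp{2\ell}$, so $d=\dim M=2n+4\ell$, and note that since $p_1^{n+\ell}$ has degree $4(n+\ell)$, the sphere involved must be $S^k$ with $k=4(n+\ell)-d=2n$; a short check shows $1\le 2n\le\min\!\big(\tfrac{d+2}{3},\tfrac{d-3}{2}\big)$ holds precisely because $\ell>n\ge 3$, so we are within the scope of the theorem.

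For sphericalness I would exhibit an explicit $M$-bundle over $S^{2n}$. Since $\pi_{2n}(BU(n+1))\cong\pi_{2n}(BU)\cong\bbZ$ with $c_n$ nonzero on a generator, there is a complex vector bundle $V\to S^{2n}$ of rank $n+1$ with $c_n(V)\ne 0$; all other positive-degree Chern classes of $V$ vanish for dimension reasons. Put $E:=\bbP(V)\times\cp{2\ell}$, a smooth oriented $M$-bundle over $S^{2n}$ with structure group $PU(n+1)$. Writing $\xi=c_1(\calO_{\bbP(V)}(1))$ and $b\in H^2(\cp{2\ell})$ for the generator, the identity $T^{vert}\bbP(V)\oplus\underline{\bbC}\cong\calO_{\bbP(V)}(1)\otimes\pi^*V$ together with $c_1(V)=c_2(V)=0$ gives $p_1(TE)=p_1(T^{vert}E)=(n+1)\xi^2+(2\ell+1)b^2$. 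Expanding $p_1(TE)^{n+\ell}$ binomially, the only monomial $\binom{n+\ell}{j}(n+1)^j(2\ell+1)^{n+\ell-j}\xi^{2j}b^{2(n+\ell-j)}$ that can pair nontrivially with $[E]=[\bbP(V)]\times[\cp{2\ell}]$ is the one with $j=n$: for $j>n$ one has $\xi^{2j}=0$ in $H^*(\bbP(V))$, and for $j<n$ one has $b^{2(n+\ell-j)}=0$ in $H^*(\cp{2\ell})$. Using the Borel relation $\xi^{n+1}=-c_n(V)\,\xi$ one gets $\xi^{2n}=-c_n(V)\xi^n$, and since the fiberwise integration $\pi_!$ satisfies $\pi_!(\xi^n)=1$, this term evaluates to $-\binom{n+\ell}{n}(n+1)^{n}(2\ell+1)^{\ell}\,\langle c_n(V),[S^{2n}]\rangle\ne 0$. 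Hence $p_1^{n+\ell}$ is spherical for $M$.

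For the failure of $h$-sphericalness, let $M\to E\to S^{2n}$ be any fiber-homotopy-trivial bundle, with a homotopy equivalence $E\simeq M\times S^{2n}$, so that $H^*(E)\cong H^*(M)\otimes H^*(S^{2n})$. Because $n\ge 3$ we have $4<2n$, so the summand $H^{4-2n}(M)\otimes H^{2n}(S^{2n})$ vanishes and restriction to a fiber $H^4(E)\xrightarrow{\cong}H^4(M)$ is an isomorphism. The vertical tangent bundle restricts to $TM$ on each fiber, so $p_1(T^{vert}E)$ is the unique class in $H^4(E)$ with that restriction, namely the pullback of $p_1(TM)\in H^4(M)$. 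Consequently $p_1(TE)^{n+\ell}=p_1(T^{vert}E)^{n+\ell}$ is the pullback of $p_1(TM)^{n+\ell}\in H^{4(n+\ell)}(M)$, which vanishes because $\dim M=2n+4\ell<4(n+\ell)$. Thus $\langle p_1(TE)^{n+\ell},[E]\rangle=0$ for every fiber-homotopy-trivial $E$, i.e.\ $p_1^{n+\ell}$ is not $h$-spherical for $M$.

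The substantive part is the sphericalness computation, and the likely pitfall there is bookkeeping — the precise form of $p_1(T^{vert}\bbP(V))$, the Borel presentation of $H^*(\bbP(V))$, and verifying that a single monomial survives the pairing — rather than anything conceptually hard. The vanishing half is immediate once one observes that $p_1$ sits in degree $4<k=2n$, so it is pulled back from $M$ for any fiber-homotopy-trivial bundle, whence $p_1^{n+\ell}$ lands above the dimension of $M$.
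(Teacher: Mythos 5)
Your argument only addresses part (iii) of the theorem, so as a proof of the stated result it has a substantial gap: parts (i) and (ii) — the characterization of which Pontryagin monomials $p\neq p_m$ are $h$-spherical, and the construction of a fiber homotopy trivial bundle whose \emph{only} nonzero monomial Pontryagin numbers are $\scpr{p_i\cup p_{m-i},[E]}$ and $\scpr{p_m,[E]}$ — are the core of the statement and are nowhere touched. These cannot be obtained by explicit bundle constructions of the kind you use: the paper produces them via surgery theory, identifying $\pi_k(\haut(M)/\blockdiff(M))$ with the structure set $\calS_\partial(D^k\times M)$, using the surgery exact sequence and the rational identification of normal invariants with $\ko^0((D^k,S^{k-1})\times M)\otimes\bbQ$ to build normal invariants whose underlying stable bundle $\xi$ has prescribed Pontryagin classes of the form $\lambda A_i\cdot u_k\times x_i$, killing the surgery obstruction by adjusting the top coefficient against the signature formula (\pref{Lemma}{lem:realize-p} and \pref{Lemma}{lem:pmin}), and then unblocking via Burghelea--Lashof under the hypothesis $k\le\min(\frac{d+2}3,\frac{d-3}2)$ (\pref{Lemma}{lem:unblocking}). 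The ``only if'' direction of (i) and the implication (b)$\Rightarrow$(c) in (ii) also require separate arguments (the structure of Pontryagin classes of $TM\oplus -\xi$, resp.\ the vanishing results of Hanke--Schick--Steimle/Wiemeler), none of which appear in your proposal.

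The part you do prove, (iii), is correct and is essentially the paper's own route, carried out slightly more explicitly: your $E=\bbP(V)\times\cp{2\ell}$ with $V\to S^{2n}$ of rank $n+1$ and $c_n(V)\neq 0$ is the projectivized-bundle construction of the appendix (\pref{Construction}{construction_bundle} is $\bbP(\underline{\bbC}\oplus\xi)$), and your Borel-relation computation of $\scpr{\xi^{2n},[\bbP(V)]}$ replaces the paper's Euler-number argument in \pref{Proposition}{prop:p_1^n}; the non-$h$-sphericity argument via ``every degree-$4$ class of a fiber homotopy trivial bundle over $S^{2n}$ ($n\ge 3$) is pulled back from $M$, so $p_1(TE)^{n+\ell}$ is pulled back from $H^{4(n+\ell)}(M)=0$'' is the appendix's \pref{Proposition}{prop:fiberhomotopicallytrivialbundles} and is fine (one quibble: the class $y$ with $p_1(TE)=s^*y$ need not equal $p_1(TM)$, since the fiberwise trivialization restricted to a fiber is only a self homotopy equivalence of $M$; but this does not matter, as $y^{n+\ell}=0$ for any $y\in H^4(M)$). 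So within (iii) the reasoning stands; the missing items are (i) and (ii).
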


\noindent We remark that (iii) of the above theorem follows from \pref{Theorem}{thm:main} (i) and \pref{Proposition}{prop:p_1^n}. The latter goes back to a joint work with Jens Reinhold, which now forms the jointly written appendix to this article.

\begin{rem}\label{rem:main}
\begin{enumerate}
	\item It is known that no characteristic class $c\in H^{d+k}(\bo;\bbQ)$ is spherical for any $M$ if ${k}>2d$ (cf.~\cite[Lemma 2.3]{Wiemeler}). This implies the necessity for a bound on $k$, even though the one we give in \pref{Theorem}{thm:main} might not be optimal. This bound can be improved depending on the connectivity of $M$: Let $M$ be a $d$-dimensional, $\ell$-connected manifold with $d\ge5$ and $\ell\ge1$. We say that $k\ge1$ is in the \emph{unblocking range for $M$} if one of the following is satisfied
	\begin{enumerate}
		\item $k\le \min(\frac{d+2}3, \frac{d-3}2)$
		\item $d$ is even and $k\le\min(d-4, 2\ell-1)$.
		\item $d$ is odd, $(k-1)$ is not divisible by $4$ and $k\le\min(d-6,2\ell-1)$.
	\end{enumerate}
	\pref{Theorem}{thm:main} holds for all $k$ in the unblocking range for $M$. Note, that any of the above conditions implicitely enforces $d\ge5$ if we want $k\ge1$.
	\item If all rational Pontryagin classes of $M$ vanish, then again no characteristic class $c\in H^{d+k}(\bo;\bbQ)$ is spherical by \cite[Proposition 1.9]{HankeSchickSteimle}\footnote{Both \cite[Lemma 2.3]{Wiemeler} and \cite[Proposition 1.9]{HankeSchickSteimle} are only stated for the $\ahat$-class, but the given proofs apply to any $c\in H^{d+k}(\bo;\bbQ)$.}, see also \pref{Theorem}{MainThmAppendix}, part (i)). In particular, this proves $(b)\Rightarrow(c)$ in \pref{Theorem}{thm:main}, (ii): If $p_m$ is spherical, then some rational Pontryagin class of $M$ must be nonzero. Note that $(a)\Rightarrow (b)$ is trivial and $(c)\Rightarrow (a)$ follows from the first half of \pref{Theorem}{thm:main}, (ii).
\end{enumerate}
\end{rem}

\noindent Next, let $\fib_{M,k}\subset \Omega_{d+k}\otimes\bbQ$ denote the set of classes represented by a fiber homotopy trivial $M$-bundles $E\to S^{k}$. Note that $\fib_{M,k}$ is a linear subspace since it is given by the image of the transfer homomorphism
\[\pi_k\left(\frac{\haut(M)}{\diff(M)}\right)\otimes\bbQ\too\Omega_{d+k}\otimes\bbQ,\]
where $\haut(M)/\diff(M)$ denotes the classifying space for fiber homotopy trivial $M$-bundles and the above map is given by sending $S^k\to\haut(M)/\diff(M)$ to the bundle classified by it. We will now give estimates for the dimension of $\fib_{M,k}$. For this, let $i_{\min}$ be the minimum positive integer $i$ such that $p_i(TM)\not=0$ and let $n_{\max}$ be the maximum integer $n\ge1$ such that $p_{i_{\min}}(TM)^n\not=0$.

\begin{bigthm}\label{main:lowerbound}
	Let $M$ be  simply  connected  and let $k\ge1$ be in the unblocking range for $M$ sucht that $d+k=4m$. Then, for every $1\le n\le n_{\max}$ there exists a fiber homotopy trivial $M$-bundle $E_n\to S^k$ with the property that for $\ell\ge1$ we have 
	\[\scpr{p_{i_{\min}}(TE_n)^\ell\cup p_{m - \ell\cdot i_{\min}}(TE_n),[E_n]}\not=0\quad \iff\quad n=\ell.\]
\end{bigthm}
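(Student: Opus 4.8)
The plan is to run the block-bundle and surgery-theoretic methods underlying the paper's main results: reduce the construction of such a bundle to the choice of a normal invariant, make an explicit choice, and unblock.

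\emph{Reduction to normal invariants.} Fiber homotopy trivial $M$-bundles over $S^k$ are classified up to concordance by $\pi_k(\haut(M)/\diff(M))$, and it is here that the hypothesis ``$k$ in the unblocking range'' enters --- via connectivity estimates using the ideas of Krannich--Kupers--Randal-Williams --- to guarantee that the unblocking map $\haut(M)/\diff(M)\to\blockhaut(M)/\blockdiff(M)$ is a rational homotopy equivalence in that range. As rescaling by a nonzero integer leaves rational characteristic numbers unchanged, it suffices to produce a suitable \emph{block} bundle. For simply connected $M$, $\blockhaut(M)/\blockdiff(M)$ is the smooth block structure space of $M$; since $L_*(\bbZ)$ is rationally concentrated in degrees divisible by $4$ and $d+k=4m$, the surgery exact sequence identifies $\pi_k$ of it, rationally, with the kernel of the rational surgery obstruction
\[\sigma\colon\bigoplus_{j\ge1}H^{4j-k}(M;\bbQ)\too L_{d+k}(\bbZ)\otimes\bbQ\cong\bbQ\]
on the group of rational normal invariants of the family. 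By the signature theorem, $\sigma$ is, up to a nonzero universal constant, the change of signature $w=\sum_j w_j\mapsto\sum_j\lambda_j\scpr{w_j\cup\calL_{m-j}(TM),[M]}$ with each $\lambda_j\neq0$; in particular its restriction to the top summand $j=m$, i.e.\ $w_m\mapsto\lambda_m\scpr{w_m,[M]}$ on $H^d(M;\bbQ)\cong\bbQ$, is an isomorphism onto $\bbQ$.

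\emph{The dictionary and the construction.} Because $E$ is fiber homotopy trivial, $H^*(E;\bbQ)=H^*(M;\bbQ)\otimes H^*(S^k;\bbQ)$; write $u\in H^k(E;\bbQ)$ for the pulled-back generator, so $u^2=0$. The total space of the bundle attached to a normal invariant $w$ then has $p_j(TE)=p_j(TM)+u\cdot\bar w_j$, where $w\mapsto\bar w$ is the invertible operation of multiplying the total class by $p(TM)^{\pm1}$; hence the $\bar w_j\in H^{4j-k}(M;\bbQ)$ may be chosen freely subject only to $\sigma(w)=0$, and --- the point on which everything turns --- any monomial Pontryagin number of $E$ is the pairing of the $u$-coefficient of that monomial against $[M]$, the $u$-free part automatically lying in $H^{d+k}(M;\bbQ)=0$. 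Now fix $1\le n\le n_{\max}$ and set $a:=m-n\,i_{\min}$; one checks $a\ge1$ and $0\le 4a-k=d-4n\,i_{\min}\le d$, using $n\,i_{\min}\le n_{\max}\,i_{\min}\le d/4$. As $p_{i_{\min}}(TM)^n\neq0$ and rational Poincaré duality is perfect, pick $z\in H^{d-4n\,i_{\min}}(M;\bbQ)$ with $\scpr{z\cup p_{i_{\min}}(TM)^n,[M]}\neq0$. If $(n+1)i_{\min}\neq m$ (so $a\neq i_{\min}$), set $\bar w_a=z$, choose $\bar w_m$ so that $\sigma(w)=0$, and all other $\bar w_j=0$: since $p_m(TM)=0$ for degree reasons, $p_{i_{\min}}(TE)=p_{i_{\min}}(TM)$ and $p_{m-\ell i_{\min}}(TE)=p_{m-\ell i_{\min}}(TM)$ for $\ell\ge1$ unless $\ell=n$, so for $\ell\neq n$ the monomial $p_{i_{\min}}(TE)^\ell\cup p_{m-\ell i_{\min}}(TE)$ is $u$-free of degree $d+k$ and pairs to $0$, whereas for $\ell=n$ it pairs to $\scpr{z\cup p_{i_{\min}}(TM)^n,[M]}\neq0$. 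In the borderline case $(n+1)i_{\min}=m$ --- which forces $n=n_{\max}$, since then $p_{i_{\min}}(TM)^{n+1}\in H^{d+k}(M;\bbQ)=0$ --- the deformed class $p_a(TE)=p_{i_{\min}}(TE)$ now appears in every relevant monomial, so instead deform the whole string $p_{j\,i_{\min}}(TE)$, $j=1,\dots,n$, by classes $v_j$: put $v_1=z$, which makes the $\ell=n$ number $(n+1)\scpr{z\cup p_{i_{\min}}(TM)^n,[M]}\neq0$, then solve triangularly, for $\ell=n-1,\dots,1$, for $v_{n+1-\ell}$ making the $\ell$-th number vanish --- possible because that condition reads $\scpr{p_{i_{\min}}(TM)^\ell\cup v_{n+1-\ell},[M]}=(\text{a known constant})$ and $p_{i_{\min}}(TM)^\ell\neq0$ for $\ell<n\le n_{\max}$ --- and finally absorb $\sigma$ into the top summand. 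Unblocking gives $E_n$.

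\emph{Main obstacle.} I expect the real content to lie not in these choices but in the machinery they stand on: the identification of (block) bundles with the structure space and the surgery exact sequence, the normal-invariant/Pontryagin-class dictionary, and --- the technical heart, and the only place the bound on $k$ is used --- the unblocking comparison of $\diff(M)$ with $\blockdiff(M)$ in the unblocking range. Among the points specific to this statement, the delicate one is the borderline case $(n+1)i_{\min}=m$ --- in particular the observation that the claimed equivalence is meaningful only for $\ell$ with $m-\ell i_{\min}\ge1$, since for $\ell=n+1$ the number $\scpr{p_{i_{\min}}(TE)^{n+1},[E]}$ equals the $\ell=n$ number for \emph{every} bundle --- together with the recurring need to place the correction that kills $\sigma$ in a degree ($j=m$) that disturbs none of the Pontryagin numbers under control.
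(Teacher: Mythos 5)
Your proposal is correct and follows essentially the same route as the paper: identify $\pi_k$ of the block structure space with normal invariants killed by the surgery obstruction, prescribe the rational Pontryagin data of the underlying stable bundle (using the top class $\bar w_m$ to absorb the signature obstruction, and treating the case $m=(n+1)i_{\min}$, where the deformation hits $p_{i_{\min}}(TE)$ itself, separately), and then unblock via \pref{Lemma}{lem:unblocking}; your choice of invariant is just slightly more economical than the paper's triangular system in \pref{Lemma}{lem:pmin}, and your remark about the degenerate value $\ell$ with $m-\ell i_{\min}=0$ is a fair reading of the statement that the paper's own lemma shares. Minor cosmetic slips (rescaling multiplies, rather than preserves, the characteristic numbers, and the unblocking comparison is only a rational surjection, not an equivalence) do not affect the argument.
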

\noindent Since $\Omega_*\otimes\bbQ$ is classified by Pontryagin-numbers we get a lower bound on $\dim\fib_{M,k}$ which we prove to be attained for certain manifolds.
\begin{bigcor}\label{cor:lowerbound}
	\begin{enumerate}
		\item We have $\dim\fib(M,k)\ge n_{\max}$. 
		\item If all Pontryagin classes of $M$ are contained in the truncated polynomial $\bbQ$-algebra generated by $p_{i_{min}}(TM)$, then $\dim\fib(M,k)= n_{\max}$.
	\end{enumerate}
\end{bigcor}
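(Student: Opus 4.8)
The strategy is to read off part~(i) from \pref{Theorem}{main:lowerbound} and to obtain part~(ii) by proving the matching upper bound $\dim\fib(M,k)\le n_{\max}$. For part~(i), set $\phi_\ell(X):=\scpr{p_{i_{\min}}(TX)^{\ell}\cup p_{m-\ell\cdot i_{\min}}(TX),[X]}$ for $1\le\ell\le n_{\max}$; this is an honest monomial Pontryagin number of degree $d+k$, since $1\le\ell\cdot i_{\min}\le n_{\max}\cdot i_{\min}\le d/4<m$ (nonvanishing of $p_{i_{\min}}(TM)^{n_{\max}}$ forces $4\,n_{\max}i_{\min}\le d$), so that $m-\ell\cdot i_{\min}\ge 1$. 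By \pref{Theorem}{main:lowerbound} there are fiber homotopy trivial bundles $E_1,\dots,E_{n_{\max}}\to S^k$ with $\phi_\ell(E_n)\ne0$ exactly when $\ell=n$; applying $\phi_\ell$ to a relation $\sum_n c_n[E_n]=0$ gives $c_\ell=0$, so the classes $[E_1],\dots,[E_{n_{\max}}]\in\fib(M,k)\subset\Omega_*\otimes\bbQ$ are linearly independent and $\dim\fib(M,k)\ge n_{\max}$.

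For the upper bound I would first cut down the number of potentially nonzero Pontryagin numbers of a fiber homotopy trivial bundle. (If $M$ has no nontrivial rational Pontryagin class then $\fib(M,k)=0$ by \pref{Remark}{rem:main}(ii), so assume $i_{\min},n_{\max}\ge1$.) Let $E\to S^k$ be fiber homotopy trivial; fix a fiber homotopy equivalence to $M\times S^k$ over $S^k$ and use it to identify $H^*(E;\bbQ)\cong H^*(M;\bbQ)\otimes H^*(S^k;\bbQ)$ as rings, writing $x\in H^k(E;\bbQ)$ for the image of a generator of $H^k(S^k;\bbQ)$, so that $x^2=0$. Since $T^v E$ restricts to $TM$ on a fibre we have $p_j(T^v E)=p_j(TM)+b_j x$ with $b_j\in H^{4j-k}(M;\bbQ)$, and $p(TE)=p(T^v E)$ because $\pi^* TS^k$ is stably trivial. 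Expanding a degree-$m$ monomial $q=p_{j_1}\cdots p_{j_n}$ in the classes $p_{j_s}(T^v E)$ and using $x^2=0$ together with $H^{4m}(M;\bbQ)=0$ (as $4m=d+k>d$) kills all terms of $x$-degree $0$ or $\ge2$, so that
\[\scpr{q(TE),[E]}=\sum_{t=1}^{n}\Big\langle\,\prod_{s\ne t}p_{j_s}(TM)\cdot b_{j_t}\,,\,[M]\,\Big\rangle.\]
Now the hypothesis enters: each $p_j(TM)$ is either $0$ or a rational multiple of $P^{j/i_{\min}}$, where $P:=p_{i_{\min}}(TM)$ satisfies $P^{n_{\max}}\ne0=P^{n_{\max}+1}$. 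Hence each summand above is, up to a scalar depending only on $q$, either $0$ or $\scpr{P^{(m-j_t)/i_{\min}}b_{j_t},[M]}$ with $(m-j_t)/i_{\min}$ a non-negative integer $\le n_{\max}$. Collecting terms, for fiber homotopy trivial $E$ every $\scpr{q(TE),[E]}$ lies in the span of the $n_{\max}+1$ functionals $\beta_j(E):=\scpr{P^{(m-j)/i_{\min}}b_j,[M]}$, $j\in\{m,\,m-i_{\min},\,\dots,\,m-n_{\max}\cdot i_{\min}\}$; moreover $\beta_{m-\ell\cdot i_{\min}}=\phi_\ell$ for $1\le\ell\le n_{\max}$, while $\beta_m(E)=\scpr{p_m(TE),[E]}$ receives a contribution from the single monomial $q=p_m$ only.

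To delete the one remaining functional $\beta_m$ I would bring in the signature. A fiber homotopy trivial $E$ is homotopy equivalent to $M\times S^k$, so $\sign(E)=\sign(M)\cdot\sign(S^k)=0$; by the Hirzebruch signature theorem and $p(TE)=p(T^v E)$ this reads $0=\scpr{L_m(TE),[E]}$, which, expanding $L_m$ into monomials and using the previous paragraph, becomes $0=s_m\,\beta_m(E)+\sum_{\ell=1}^{n_{\max}}c_\ell\,\phi_\ell(E)$ with $s_m\ne0$ the coefficient of $p_m$ in $L_m$ (the only monomial feeding $\beta_m$). Thus $\beta_m=-s_m^{-1}\sum_\ell c_\ell\phi_\ell$ on $\fib(M,k)$, so every monomial Pontryagin number restricts on $\fib(M,k)$ to a linear combination of $\phi_1,\dots,\phi_{n_{\max}}$. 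Since $\Omega_*\otimes\bbQ$ is detected by its monomial Pontryagin numbers, the linear map $(\phi_1,\dots,\phi_{n_{\max}})\colon\fib(M,k)\to\bbQ^{n_{\max}}$ is injective, whence $\dim\fib(M,k)\le n_{\max}$; together with part~(i) this proves~(ii).

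The step I expect to be the main obstacle is the reduction to the $\beta_j$ in the middle paragraph: one must extract, from the hypothesis that the Pontryagin classes of $M$ lie in the truncated polynomial $\bbQ$-algebra generated by $p_{i_{\min}}(TM)$ --- equivalently, that $p_j(TM)$ is a scalar multiple of $P^{j/i_{\min}}$ when $i_{\min}\mid j$ and vanishes otherwise --- that the products $\prod_{s\ne t}p_{j_s}(TM)$ collapse to scalar multiples of powers of $P$, track the various degree inequalities so as to pin the resulting functionals down exactly as the $\beta_j$ above, and --- crucially for the tightness of the count --- check that no monomial other than $p_m$ contributes to $\beta_m$, so that the signature identity is a genuinely nontrivial relation in the $\beta_m$-direction rather than a tautology.
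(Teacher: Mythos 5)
Your proposal is correct and follows essentially the same route as the paper: part (i) is the paper's diagonal-matrix argument, and for part (ii) the paper likewise splits $p_i(TE)$ into a fiber term plus a correction whose pairwise products vanish (using the Wang sequence rather than the ring isomorphism $H^*(E;\bbQ)\cong H^*(M\times S^k;\bbQ)$), collapses all monomial Pontryagin numbers of a fiber homotopy trivial bundle to the $n_{\max}+1$ functionals $p_{i_{\min}}^e\cup p_{m-e\cdot i_{\min}}[\_]$, $e=0,\dots,n_{\max}$, and then removes one of them via the vanishing of the signature and $s_m\neq0$. The only slip is the claimed identity $\beta_{m-\ell\cdot i_{\min}}=\phi_\ell$, which fails when $i_{\min}\mid m$ and $m\le (n_{\max}+1)\,i_{\min}$ (for example $\phi_{\ell_0}=(1+\ell_0)\,\beta_{i_{\min}}$ for $\ell_0=m/i_{\min}-1$, and in general cross-terms proportional to $\beta_{i_{\min}}$ appear); however, the transition matrix between $(\phi_\ell)_{\ell}$ and $(\beta_{m-\ell\cdot i_{\min}})_{\ell\ge1}$ is triangular with nonzero diagonal, so the span argument and hence your conclusion are unaffected.
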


\begin{proof}
	\begin{enumerate}
		\item Consider the linear homomorphism $\Omega_{d+k}\too\bbQ^{n_{\max}}$ given by 
	\[[X]\mapsto \Bigl(\scpr{p_{i_{\min}}(TX)^\ell\cup p_{\frac{d+k}{4} - \ell\cdot i_{\min}}(TX),[X]}\Bigr)_{\ell=1,\dots,n_{\max}}.\]
If $\calE$ denotes the vector space spanned by $E_1,\dots E_{n_{\max}}$ from \pref{Theorem}{main:lowerbound}, the composition
\[\calE\to\fib(M,k)\to\Omega_{d+k}\otimes\bbQ\to\bbQ^{n_{\max}}\]
is given by a diagonal matrix with nonzero entries on the diagonal by \pref{Theorem}{main:lowerbound}. Hence, the first map is forced to be injective.
		\item is proven in \pref{Section}{sec:proofnmax} below.\qedhere
	\end{enumerate}
\end{proof}

\begin{example}
	The prototypical examples of manifolds for which this lower bound from \pref{Corollary}{cor:lowerbound} is attained are $\cp{a}, \hp{b}$ and $\op2$ for $a,b\ge2$. If $k_a\equiv2a\mod 4$ and $k_a\le\min(\frac{2a+2}3,a-\frac32)$, then 
	\[\dim\fib_{\cp{a},k_a} = n_\max(\cp{a}) = \floor{\frac a2}.\]
	Analogously, for $k_b, k_c$ divisible by $4$ and $k_b\le\min(\frac{4b+2}{3}, 2b-\frac32)$ (or $k_b\le4$) and $k_c\le 12$, we obtain:
	\[\dim\fib_{\hp{b},k_b} = b,\qquad \dim\fib_{\op{2},k_c} = 2.\]
\end{example}

\noindent In order to describe the upper bound, recall that
\[H^{*}(\bo(d);\bbQ)=	\bbQ[p_1,\dots, p_{\floor{\frac{d}2}}].\]
Let $p(n)$ be the number of partitions of $n\in\bbN$ into sums of positive natural numbers and let us fix $m\coloneqq \frac{d+k}4$. The assumption of $k$ being in the unblocking range guarantees that $k\le d-2$ which implies that $4m=k+d\le2d-2\le\deg(p_{\floor{\frac{d}2}})$ and hence we have $\dim H^{4m}(\bo(d);\bbQ) = p(m)$. Furthermore, for $\ell\in \bbN$ we define $p(n,\ell)$ to be the number of partitions of $n$ into natural numbers $\le \ell$. Note that $p(n,n)=p(n)$, $p(n,0) = 0$, $p(n,1)=1$ and $p(n,2) = 1 + \floor{n/2}$. Furthermore $p(n,\ell) = \calO(n^{\ell-1})$. 

We have the following observation considering an upper bound on $\dim\fib_{M,k}$: If $i_1,\dots, i_r$ is such that $\sum i_j=4m$ and $i_j < k/4$ for all $j$, then we  have \(\scpr{{p_{i_1}(TE)\cdots p_{i_r}(TE)},[E]} = 0\) for every fiber homotopy trivial $M$-bundle $E\to S^k$ by the following argument: By our assumption on $(i_j)$, the degree of $p_{i_1}(TM)\cdots\widehat {p_{i_\ell}(TM)}\cdots p_{i_r}(TM)$ equals $4m - 4i_j>d$ and hence this class vanishes since the corresponding cohomology group of $M$ vanishes. The claim follows from \pref{Theorem}{thm:main}, (i). We get the following upper bound:
\begin{bigthm}\label{main:upperbound}
	Let $M$ be  simply  connected  and let $k\ge1$ be in  the unblocking range for $M$. Then for $4m=d+k$, we have $\dim\fib_{M,k}\le p(m) - p(m, m-\ceil{\frac{d+1}{4}}) -1$. There exist a simply connected manifold $M$ in dimensions $d\equiv2,3\;(4)$ for which equality holds.
\end{bigthm}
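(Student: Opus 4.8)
The plan is to split the statement into the inequality and the sharpness (existence of $M$ realizing equality). For the inequality, I would reformulate $\fib_{M,k}$ via Pontryagin numbers: by part (i) of \pref{Theorem}{thm:main}, a monomial Pontryagin number $\scpr{p_{i_1}(TE)\cdots p_{i_r}(TE),[E]}$ can be nonzero for a fiber homotopy trivial $M$-bundle $E\to S^k$ only if the monomial $p=p_{i_1}\cup\cdots\cup p_{i_r}$ (of total degree $4m$) has an index $i_\ell$ such that removing it leaves a nonzero class in $H^{4(m-i_\ell)}(M;\bbQ)$. Since $H^{>d}(M;\bbQ)=0$ and $4(m-i_\ell)=4m-4i_\ell=d+k-4i_\ell$, the condition $4(m-i_\ell)\le d$ forces $i_\ell\ge\lceil\frac{k}{4}\rceil$, but in fact — using $4m=d+k$ and wanting $4m-4i_\ell\le d$, i.e. $i_\ell\ge k/4$, and combined with the observation already recorded in the excerpt that indices strictly below $k/4$ contribute nothing — every monomial that survives must contain at least one part of size $\ge\lceil\frac{d+1}{4}\rceil$ (this is where the precise inequality $4(m-i_\ell)\le d$ has to be massaged: $i_\ell \ge \lceil \frac{k}{4}\rceil = m - \lfloor \frac{d}{4}\rfloor$, and one checks $\lceil\frac{d+1}{4}\rceil = m - \lfloor\frac{d}{4}\rfloor$ when $d\not\equiv 0,3\ (4)$, handling the parity cases $d\equiv 2,3$ separately). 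Thus $\fib_{M,k}$ is contained in the subspace of $H_{4m}$ dual to monomials with a part $\ge\lceil\frac{d+1}{4}\rceil$; the number of degree-$4m$ monomials is $p(m)$, those with \emph{all} parts $\le m-\lceil\frac{d+1}{4}\rceil$ number $p(m,m-\lceil\frac{d+1}{4}\rceil)$, and $p_m$ itself is excluded by \pref{Theorem}{thm:main}(i) (which explicitly requires $p\ne p_m$), giving the $-1$. Hence $\dim\fib_{M,k}\le p(m)-p(m,m-\lceil\frac{d+1}{4}\rceil)-1$.

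For sharpness I would aim to build $M$ whose cohomology is ``as large as possible'' so that \emph{every} monomial permitted by the necessary condition above is actually realized as a nonzero Pontryagin number of some fiber homotopy trivial bundle. The natural strategy is to take $M$ with $H^*(M;\bbQ)$ and Pontryagin classes rich enough that for each surviving monomial $p=p_{i_1}\cup\cdots\cup p_{i_r}$ there is an index $i_\ell$ with $p_{i_1}(TM)\cdots\widehat{p_{i_\ell}(TM)}\cdots p_{i_r}(TM)\ne0$ in $H^{4(m-i_\ell)}(M)$, and then invoke \pref{Theorem}{thm:main}(i) to conclude $h$-sphericity of each such $p$, together with a linear independence argument as in the proof of \pref{Corollary}{cor:lowerbound} (the bundles realizing distinct surviving monomials span a subspace of $\fib_{M,k}$ of the full dimension $p(m)-p(m,m-\lceil\frac{d+1}{4}\rceil)-1$, by pairing against the corresponding Pontryagin-number functionals, which form a ``triangular'' system). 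A workable candidate in dimensions $d\equiv 2,3\ (4)$ is a product of projective spaces (e.g.\ $\cp{a}$'s and $\hp{b}$'s, possibly with a sphere factor to adjust the dimension mod $4$), chosen so that the total Pontryagin class generates a polynomial subalgebra of $H^*(M;\bbQ)$ in the relevant degree range; one must verify that the truncation imposed by $H^{>d}(M)=0$ cuts off \emph{exactly} the monomials excluded by the necessary condition and no more.

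The main obstacle I anticipate is the sharpness half: one needs a manifold where the cohomological truncation (vanishing above degree $d$) is perfectly aligned with the combinatorial bound, i.e.\ $p_{i_1}(TM)\cdots\widehat{p_{i_\ell}}\cdots p_{i_r}(TM)\ne 0$ for \emph{some} $\ell$ precisely when the monomial $p$ has a part of size $\ge\lceil\frac{d+1}{4}\rceil$ and $p\ne p_m$. Getting a single manifold to satisfy this for all such monomials simultaneously — while keeping $d$ in the prescribed residue classes $2,3\ (4)$ and $k$ in the unblocking range with $d+k=4m$ — requires a careful choice and a degree-by-degree check that the relevant products of Pontryagin classes are nonzero; the restriction to $d\equiv 2,3\ (4)$ in the statement already signals that the $d\equiv 0,1\ (4)$ cases do not admit such a clean realization, presumably because there the arithmetic $\lceil\frac{d+1}{4}\rceil$ versus $m-\lfloor\frac{d}{4}\rfloor$ no longer matches. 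A secondary technical point is to confirm that \pref{Theorem}{thm:main} is genuinely applicable here, i.e.\ that the chosen $M$ is simply connected and that $k$ can be taken in the unblocking range with $d+k\equiv 0\ (4)$; for products of projective spaces this is straightforward but should be stated.
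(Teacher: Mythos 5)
Your upper bound argument has a genuine gap at the ``$-1$''. The first part is fine: by \pref{Theorem}{thm:main}(i) and $H^{>d}(M;\bbQ)=0$, every monomial all of whose parts are $<k/4$ (equivalently $\le m-\ceil{\frac{d+1}{4}}$) gives a Pontryagin-number functional that vanishes on $\fib_{M,k}$, which yields $\dim\fib_{M,k}\le p(m)-p(m,m-\ceil{\frac{d+1}{4}})$. But you then ``exclude $p_m$ because Theorem \ref{thm:main}(i) requires $p\ne p_m$'': that exclusion does not produce a vanishing functional. Part (i) simply does not cover $p_m$, and part (ii) of \pref{Theorem}{thm:main} shows that $\scpr{p_m(TE),[E]}$ is in general \emph{nonzero} on $\fib_{M,k}$, so you cannot drop it from the count. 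The extra relation in the paper comes from the vanishing of the signature of a fiber bundle over a sphere: the $\calL$-class functional vanishes on $\fib_{M,k}$, its coefficient $s_m$ at $p_m$ is nonzero, and $p_m$ is not among the already-vanishing monomials, so this is one further independent linear relation, giving the $-1$. (Also a slip: the threshold for a ``surviving'' part is $i_\ell\ge m-\floor{\frac d4}$, i.e. $i_\ell\ge k/4$, not $i_\ell\ge\ceil{\frac{d+1}{4}}$; your claimed identity between the two is false in general and only the complementary count $p(m,m-\ceil{\frac{d+1}{4}})$ is correct.)

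The sharpness half is also missing its key ingredient. Knowing that each surviving monomial $p_I$ is $h$-spherical gives, for each $I$, some bundle with $\scpr{p_I(TE),[E]}\ne0$, but \pref{Theorem}{thm:main}(i) gives no control over the \emph{other} monomial Pontryagin numbers of that bundle, so there is no ``triangular'' system to appeal to. The paper instead proves \pref{Lemma}{lem:blocklarge}: for a $P$-large manifold (all monomials in Pontryagin classes of $TM$ linearly independent) and each $I\ne(m)$ with $|I|=m$, at least two parts, and some part $\ge m-\floor{\frac d4}$, one chooses the normal invariant by a downward induction on the coefficients $A_J$ so that $p_I$ and $p_m$ are the \emph{only} nonzero monomial Pontryagin numbers of the resulting (block) bundle, while also killing the surgery obstruction; linear independence in $\Omega_{4m}\otimes\bbQ$ is then immediate and the number of such $I$ is exactly $p(m)-p(m,m-\ceil{\frac{d+1}{4}})-1$. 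A single product of projective spaces will typically not be $P$-large (its Pontryagin monomials satisfy relations), which is why the paper builds $M$ as a disjoint union of cobordism-basis manifolds crossed with spheres and then makes it simply connected by connected sums; this last step is possible precisely when $d\equiv2,3\ (4)$, because otherwise the top-degree Pontryagin monomials would be forced into the at most one-dimensional groups $H^{d}$ or $H^{d-1}$ of a connected manifold. That, and not an arithmetic mismatch of $\ceil{\frac{d+1}{4}}$ versus $m-\floor{\frac d4}$, is the reason for the dimension restriction in the statement.
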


\noindent The upper bound is an immediate consequence of the above observation together with the fact that $\sigma(E)=0$. The main difficulty of \pref{Theorem}{main:upperbound} lies in proving sharpness. In order to do so, we construct a manifold $M$ and for every $I=(i_1,\dots, i_s)$ with $s\ge2$, $\sum i_j=m$ and $i_j\ge(4m-d)/4$ for some $j$, we construct a fiber homotopy trivial $M$-bundle $E_I\to S^k$ such that $\scpr{p_I(TE_I),[E_I]}$ and $\scpr{p_m(TE_I),[E_I]}$ are the \emph{only} nontrivial monomial Pontryagin numbers of $E_I$ (\pref{Lemma}{lem:blocklarge} together with \pref{Lemma}{lem:unblocking}). As $\Omega_{d+k}\otimes\bbQ$ is classified by Pontryagin numbers, all $(E_I)_{I \text{ as above}}$ are linearly independent in $\Omega_{d+k}\otimes\bbQ$. It follows that 
\[\dim\fib_{M,k}\ge \underbrace{|\left\{(i_1,\dots, i_s)\colon s\ge2,\ \sum i_j=m\text{ and }i_j\ge(4m-d)/4\right\}|}_{ = p(m)-p(m-\ceil{\frac{d+1}{4}}) -1}.\]

\begin{rem}\label{rem:upperbound}
	If $d\equiv 0\;(4)$ there exists a non-connected manifold $M$ where every component is simply connected such that equality holds.
\end{rem}

\subsection{Sphericity of the \texorpdfstring{$\ahat$}{A-hat}-genus}

Among all characteristic numbers, there are 2 of particular interest: The signature and the $\ahat$-genus. It is well-known that the signature is multiplicative in fiber bundles with simply connected base, so the Hirzebruch $\calL$-class is not spherical. The analogous statement is known to be wrong for the $\ahat$-genus: In \cite[Proposition 1.10]{HankeSchickSteimle}, Hanke--Schick--Steimle constructed a fiber bundle $E\to S^k$ with $\ahat(E)\not=0$. Their construction however \enquote{is based on abstract existence results in differential topology [and] does not yield an explicit description of the diffeomorphism type of the [...] manifold} {[}loc.~cit.~p.~3{]}. This has been resolved by Krannich--Kupers--Randal-Williams in \cite{KKRW}, where they constructed a fiber bundle $\hp2\to E\to S^4$ with $\ahat(E)\not=0$. Employing part (ii) of \pref{Theorem}{thm:main} together with the computational result \cite[Lemma 2.5]{a-hat-bundles} we can go far beyond their result.

\begin{prop}\label{prop:a-hat-multip}
	Let $M$ be  simply  connected  and let $k\ge1$ be in  the unblocking range for $M$ such that $d+k=4m$. Then the following are equivalent:
	\begin{enumerate}
		\item $\ahat_{m}$ is h-spherical for $M$.
		\item $\ahat_{m}$ is spherical for $M$.
		\item $M$ admits a nontrivial rational Pontryagin class.
	\end{enumerate}
\end{prop}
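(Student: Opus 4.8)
The plan is to deduce \pref{Proposition}{prop:a-hat-multip} from part (ii) of \pref{Theorem}{thm:main} by a purely algebraic manipulation of characteristic classes, exactly in the spirit of how (iii) was derived from (i). The key point is to compare the $\ahat$-class, which is a rational polynomial in the Pontryagin classes with a nonzero coefficient on $p_m$ in degree $4m$, against the monomial basis in which \pref{Theorem}{thm:main} is phrased.

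First I would recall the structure of the degree-$4m$ part of the $\ahat$-class: one has $\ahat_m = \alpha_m\, p_m + (\text{decomposable terms})$, where $\alpha_m\neq 0$ and the decomposable terms are a $\bbQ$-linear combination of monomials $p_{i_1}\cup\cdots\cup p_{i_n}$ with $n\geq 2$ and $\sum i_j = m$; this is standard and is precisely the content used in \cite[Lemma 2.5]{a-hat-bundles}, which I would cite. The implications $(a)\Rightarrow(b)$ is trivial and $(b)\Rightarrow(c)$ is \pref{Remark}{rem:main}, (ii) (if $\ahat_m$ is spherical then some rational Pontryagin class of $M$ is nonzero, by \cite[Proposition 1.9]{HankeSchickSteimle}). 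So the only thing to prove is $(c)\Rightarrow(a)$.

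For $(c)\Rightarrow(a)$, assume $M$ admits a nontrivial rational Pontryagin class and let $p_i(TM)$ have the lowest degree among these, as in \pref{Theorem}{thm:main}, (ii). That theorem furnishes a fiber bundle $E\to S^k$ — which we must first arrange to be fiber homotopy trivial; in fact the bundle produced in the proof of (ii) is fiber homotopy trivial, and I would point to \pref{Lemma}{lem:blocklarge}/\pref{Lemma}{lem:unblocking} for that — whose \emph{only} nonzero monomial Pontryagin numbers are $\scpr{p_i(TE)\cup p_{m-i}(TE),[E]}\neq 0$ and $\scpr{p_m(TE),[E]}\neq 0$. Evaluating $\ahat_m$ on $[E]$ and expanding in the monomial basis, every decomposable monomial other than $p_i\cup p_{m-i}$ contributes zero, so
\[
\scpr{\ahat_m(TE),[E]} = \alpha_m\scpr{p_m(TE),[E]} + \beta\scpr{p_i(TE)\cup p_{m-i}(TE),[E]}
\]
for the appropriate coefficient $\beta\in\bbQ$ of the monomial $p_i\cup p_{m-i}$ in $\ahat_m$. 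If this expression is nonzero we are done. If it happens to vanish, then $\alpha_m$ and $\beta$ are (nonzero, since $\alpha_m\neq 0$ forces $\beta\neq 0$ for cancellation) and we have a resonance between the two Pontryagin numbers. To handle this, note that \pref{Theorem}{thm:main}, (ii) (more precisely the constructions behind it, via connected sums and rescaling of the relevant classes in $\pi_k(\haut(M)/\diff(M))\otimes\bbQ$) gives us freedom to independently rescale $\scpr{p_m(TE),[E]}$ relative to $\scpr{p_i\cup p_{m-i}(TE),[E]}$: indeed \pref{Theorem}{main:lowerbound} with $n_{\max}\geq 1$ produces a fiber homotopy trivial bundle $E_1$ with $\scpr{p_{i_{\min}}(TE_1)\cup p_{m-i_{\min}}(TE_1),[E_1]}\neq 0$, and \pref{Theorem}{thm:main}, (ii) applied with $\ell$-chosen monomials, or taking fiberwise connected sums of $E$ with bundles supported only on $p_m$, lets us perturb the ratio; hence some $\bbQ$-linear combination (realized by a genuine bundle, after clearing denominators) has $\scpr{\ahat_m(T-),[-]}\neq 0$.

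The step I expect to be the main obstacle is exactly this last point: ensuring that one can realize a bundle on which the two relevant Pontryagin numbers are \emph{not} forced into the particular ratio that kills $\ahat_m$. The cleanest way around it is to observe that the proof of \pref{Theorem}{thm:main}, (ii) in fact constructs, for a suitable choice of auxiliary data, a whole family of fiber homotopy trivial bundles whose Pontryagin-number vectors span a $2$-dimensional (or at least a $\geq 1$-dimensional transverse-to-the-bad-line) subspace of $\Omega_{d+k}\otimes\bbQ$ — e.g. one bundle with only $p_m$-number nonzero need not exist, but one with $\scpr{p_i\cup p_{m-i},[\,\cdot\,]}\neq 0$ does, and independently the original $E$ from (ii) — and $\ahat_m$ is a nonzero functional in these coordinates (because $\alpha_m\neq 0$), so it cannot vanish on the entire span. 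If, alternatively, the paper's constructions only give the single bundle $E$ from (ii), then one must check by hand that the coefficients $\alpha_m$ and $\beta$ and the two realized Pontryagin numbers cannot conspire; but since the two Pontryagin numbers in \pref{Theorem}{thm:main}, (ii) are produced by genuinely independent mechanisms (the $p_m$-number coming from the signature-type correction and the $p_i\cup p_{m-i}$-number from the chosen monomial in (i)), rescaling one of them is harmless, and this resolves the obstruction.
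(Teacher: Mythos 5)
Your skeleton is the same as the paper's ((a)$\Rightarrow$(b) trivial, (b)$\Rightarrow$(c) via \cite{HankeSchickSteimle}, and (c)$\Rightarrow$(a) from the bundle of \pref{Theorem}{thm:main}(ii), i.e.\ \pref{Lemma}{lem:pmin} plus \pref{Lemma}{lem:unblocking}), but the step you yourself flag as the main obstacle is resolved incorrectly, and that is a genuine gap. For any fiber bundle $E\to S^k$ the signature vanishes, and since the bundle from \pref{Theorem}{thm:main}(ii) has only the two monomial Pontryagin numbers $x=\scpr{p_i(TE)\cup p_{m-i}(TE),[E]}$ and $y=\scpr{p_m(TE),[E]}$ nonzero, Hirzebruch's formula forces the linear relation $s_{i,m-i}\,x+s_m\,y=0$ with $s_m\neq0$; indeed this is exactly how the construction produces $y\neq0$ in the first place (the coefficient $A_0$ in \pref{Lemma}{lem:pmin} is chosen to kill the surgery obstruction, i.e.\ the signature). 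Consequently the ratio $y/x$ is not adjustable at all: there is no bundle over $S^k$ ``supported only on $p_m$'' (its signature would be $s_m y\neq0$), no fiberwise connected sum or rescaling can move the Pontryagin-number vector off the line $\{\calL_m=0\}$ in the $(p_ip_{m-i},p_m)$-plane, and the two-dimensional span you invoke does not exist inside $\fib_{M,k}$. So your fallback mechanism for the ``resonance'' case cannot work, and the claim that the two numbers are ``produced by genuinely independent mechanisms'' is precisely false.

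What saves the statement --- and what the paper actually does --- is that the resonance never occurs: one needs the computational fact that the degree-$4m$ coefficients of $\ahat$ on $p_m$ and $p_i\cup p_{m-i}$ are \emph{not} proportional to the corresponding coefficients of $\calL$, so that with $y=-(s_{i,m-i}/s_m)x$ your displayed expansion gives $\scpr{\ahat_m(TE),[E]}=x\bigl(\beta-\alpha_m s_{i,m-i}/s_m\bigr)\neq0$. This non-proportionality is exactly the content of \cite[Lemma 2.5]{a-hat-bundles}, which the paper cites for this purpose in the proof of \pref{Proposition}{prop:bundle}; you cite that lemma only for the weaker fact $\alpha_m\neq0$, which does not suffice. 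Replacing your perturbation argument by this input makes the proof correct and identical to the paper's.
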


\noindent The $\hat\calA$-genus is particularly interesting in the presence of a $\Spin$-structure as it obstructs the existence of positive scalar curvature in this case. Hence, \pref{Proposition}{prop:a-hat-multip} implies that there exist fiber bundles that are homotopy equivalent to the trivial bundle and where base and fiber both admit positive sectional curvature metrics, whereas the total space does not even support positive scalar curvature. 

This can also be applied to the study of spaces of Riemannian metrics with lower curvature bounds. Let $M$ be closed and let $\calR_\psc(M)$ denote the space of Riemannian metrics on $M$ of positive scalar curvature, equipped with the Whitney $C^\infty$-topology. Furthermore, let $\calR_C(M)$ be a $\diff(M)$-space which admits a $\diff(M)$-equivariant map to $\calR_\psc(M)$ and let $\diff(M,D)\subset\diff(M)$ denote the subgroup of those diffeomorphisms that fix an embedded disk $D\subset M$ point-wise.

\begin{bigthm}\label{main:curvature}
	Let $M^d$ be closed, simply connected $\Spin$-manifold that has at least one non-vanishing rational Pontryagin class and let $g\in\calR_C(M)$. Let $k\ge1$ be such that $(d+k)$ is divisible by $4$ and $k$ is in the unblocking range for $M$. Then for $k\ge1$ the image of the map
	\begin{center}
	\begin{tikzpicture}
		\node (0) at (0,0.7){$\pi_{k-1}(\diff(M,D))$};
		\node (1) at (5,0.7){$\pi_{k-1}(\calR_C(M))$};
		
		\draw[->] (0) to (1);
	\end{tikzpicture}
	\end{center}
	induced by the orbit map $f\mapsto f_*g$ contains an element of infinite order.
\end{bigthm}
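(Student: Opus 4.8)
\noindent The plan is to deduce the statement from \pref{Proposition}{prop:a-hat-multip} by running the classical index-theoretic obstruction to positive scalar curvature along a family, in the spirit of \cite{HankeSchickSteimle}.

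First I would produce the relevant bundle. Since $M$ is simply connected and $\Spin$, has a non-vanishing rational Pontryagin class, and $k$ lies in the unblocking range with $d+k=4m$, \pref{Proposition}{prop:a-hat-multip} gives that $\ahat_m$ is $h$-spherical for $M$. The construction underlying it --- block bundles relative to an embedded disk followed by unblocking, cf.\ the proof of \pref{Theorem}{thm:main}\,(ii) --- in fact produces a class $\alpha\in\pi_{k-1}(\diff(M,D))$ whose associated $M$-bundle $\pi\colon E_\alpha\to S^k$ satisfies $\scpr{\ahat(TE_\alpha),[E_\alpha]}\not=0$. As $M$ is simply connected it has a unique $\Spin$ structure, so the vertical tangent bundle $T^vE_\alpha$ admits a fibrewise $\Spin$ structure --- automatically for $k\not=2$, and for $k=2$ after replacing $\alpha$ by $2\alpha$ (the obstruction being $2$-torsion), which only scales $\scpr{\ahat(TE_\alpha),[E_\alpha]}$ by $2$. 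Since $TS^k$ is stably trivial, $E_\alpha$ is then a closed $\Spin$ manifold of dimension $4m$. I expect this step --- arranging the $\ahat$-nonzero bundle to be classified by a class in $\pi_{k-1}(\diff(M,D))$ with $\Spin$ total space --- to be the main technical point, and it is where the hypotheses on $M$ are used.

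Next I would set up the obstruction. Write $\rho\colon\calR_C(M)\to\calR_\psc(M)$ for the given $\diff(M)$-equivariant map and put $g_\psc\coloneqq\rho(g)$. The orbit maps $f\mapsto f_\ast g$ and $f\mapsto f_\ast g_\psc$ are based (sending $\id$ to $g$, resp.\ $g_\psc$) and $\rho$ intertwines them. The key sub-claim is: if $\beta\in\pi_{k-1}(\diff(M,D))$ has $\Spin$ total space $E_\beta$ and its image under $f\mapsto f_\ast g_\psc$ vanishes in $\pi_{k-1}(\calR_\psc(M))$, then $E_\beta$ admits a positive scalar curvature metric, whence $\scpr{\ahat(TE_\beta),[E_\beta]}=\scpr{\ahat(T^vE_\beta),[E_\beta]}=0$ by Lichnerowicz (using $\ahat(TS^k)=1$ rationally). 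I would prove this by representing $\beta$ by a clutching map $b\colon S^{k-1}\to\diff(M,D)$ together with a based null-homotopy $H\colon D^k\to\calR_\psc(M)$ with $H|_{S^{k-1}}(x)=b(x)_\ast g_\psc$; realising $E_\beta$ as $D^k\times M\cup_b D^k\times M$, the constant family $g_\psc$ over the first hemisphere and the family $H$ over the second agree over the gluing collar (after the clutching identification) and assemble to a fibrewise positive scalar curvature metric. Since $k\ge1$, scaling the base metric then makes the total space positively curved --- this fails for $k=0$, where $E\to S^0$ is just $M\sqcup M$, which is the reason $k\ge1$ is required. This is a routine Hanke--Schick--Steimle-type argument.

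Finally I would combine this with linearity. The rational number $\scpr{\ahat(TE_\beta),[E_\beta]}$ depends $\bbQ$-linearly on $\beta\in\pi_{k-1}(\diff(M,D))$: via fibre integration it is the pairing of the generalised Morita--Miller--Mumford class $\kappa_{\ahat_m}\in H^k(\bdiff(M,D);\bbQ)$ with the Hurewicz image of $\beta$ in $H_k(\bdiff(M,D);\bbQ)$ (again using that $TS^k$ is rationally trivial). Suppose now, for $k\ge2$, that the image of $\alpha$ under $f\mapsto f_\ast g$ had finite order $N\ge1$ in $\pi_{k-1}(\calR_C(M))$. Pushing forward along $\rho$ and using that $\rho$ intertwines the orbit maps, the image of $N\alpha$ under $f\mapsto f_\ast g_\psc$ vanishes in $\pi_{k-1}(\calR_\psc(M))$; the sub-claim then forces $0=\scpr{\ahat(TE_{N\alpha}),[E_{N\alpha}]}=N\cdot\scpr{\ahat(TE_\alpha),[E_\alpha]}$, contradicting $\scpr{\ahat(TE_\alpha),[E_\alpha]}\not=0$. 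Hence the image of $f\mapsto f_\ast g$ contains an element of infinite order. For $k=1$ one argues identically with mapping tori, using $\alpha^N$ in place of $N\alpha$, to conclude that the $\pi_0(\diff(M,D))$-orbit of the path-component of $g$ is infinite. Everything past the first step is standard index theory, so the one genuine obstacle is producing that first bundle.
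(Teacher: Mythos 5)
Your steps 2 and 3 (the clutching/null-homotopy construction of a fibrewise psc metric, shrinking the fibres relative to the base, Lichnerowicz plus linearity of $\beta\mapsto\scpr{\ahat(TE_\beta),[E_\beta]}$ via $\kappa_{\ahat_m}$) are exactly the standard Hitchin-type argument that the paper invokes by reference, and your handling of the $\Spin$ structure (obstruction in $H^2(S^k;\bbZ/2)$, killed by doubling when $k=2$) is an acceptable substitute for the paper's use of the rational equivalence $B\!\diffs(M)\to B\!\diff(M)$. The genuine gap is in your first step: you assert that the construction behind \pref{Proposition}{prop:a-hat-multip} is ``block bundles relative to an embedded disk'' and therefore ``in fact produces a class $\alpha\in\pi_{k-1}(\diff(M,D))$''. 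It does not. The surgery-theoretic construction produces elements of $\pi_k(\haut(M)/\blockdiff(M))\cong\calS_\partial(D^k\times M)$ -- the disk $D^k$ there lives in the \emph{base} direction, not in the fibre -- and after unblocking one only obtains a bundle classified by a map $S^k\to\haut(M)/\diff(M)$, i.e.\ a class in $\pi_k(B\!\diff(M))$. Nothing in the construction fixes an embedded disk $D\subset M$ fibrewise, so landing in $\pi_{k-1}(\diff(M,D))=\pi_k(B\!\diff(M,D))$ is precisely the point that still needs an argument; the map $B\!\diff(M,D)\to B\!\diff(M)$ has homotopy fibre essentially the frame bundle of $M$, so lifting is not automatic even rationally.

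The paper closes exactly this gap with \pref{Lemma}{lem:section} and \pref{Remark}{rem:section}: since $M$ is $\Spin$ and carries a psc metric (the image of $g$ under $\calR_C(M)\to\calR_\psc(M)$), one has $i_{\min}<d/4$ -- otherwise $\ahat(M)\neq0$ would contradict Lichnerowicz -- and then the bundle of \pref{Proposition}{prop:bundle} admits a cross-section with \emph{trivial normal bundle}. A fibrewise tubular neighbourhood of such a section is a trivialized $S^k\times D^d$ sub-bundle, and cutting it out reduces the structure group to $\diff_\partial(M\setminus \mathring D^d)\cong\diff(M,D)$, which is what produces the class $\alpha\in\pi_{k-1}(\diff(M,D))$ your argument needs. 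Note that this step genuinely uses the hypotheses ($\Spin$ together with the existence of a psc metric, through $i_{\min}<d/4$), so it cannot be waved away as part of the generic construction; with this ingredient inserted, your proof is the same as the paper's (which delegates your steps 2--3 to Hitchin, Hanke--Schick--Steimle, and the cited Proposition 3.7).
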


\noindent For $k\ge3$, the\pref{Theorem}{main:curvature} was the original motivation and the main theorem of the first version of the present article. For readers solely interested in the proof of this theorem we recommend the original version which is considerably more focused and available at \href{https://arxiv.org/abs/2104.10595v1}{2104.10595v1}.

\begin{example}\label{ex:manifolds}
\begin{enumerate}
	\item The class of manifolds to which this theorem is applicable contains $\cp{2n+1}$, $\hp n$, $\op 2$, as well as iterated products and connected sums of these with \emph{arbitrary} $\Spin$-manifolds.
	\item The most interesting examples of spaces $\calR_C(M)$ are the ones of positive or nonnegative sectional curvature and positive Ricci curvature metrics. \pref{Theorem}{main:curvature} implies, that for $M$ and $k$ as above we have
\begin{align*}
	\pi_{k-1}(\calR_\prc&(M))\otimes\bbQ\not=0\quad
	\pi_{k-1}(\calR_\psec(M))\otimes\bbQ\not=0\\
	&\text{and }\pi_{k-1}(\calR_\nnsec(M))\otimes\bbQ\not=0.
\end{align*}
provided the respective spaces are non-empty. The case of non-negative sectional curvature follows from a Ricci-flow argument, see \cite[Proposition 3.3]{a-hat-bundles}.
\end{enumerate}
\end{example}


\noindent According to \cite{ziller} the only known examples of positively curved manifolds in dimensions $4k+3$ for $k\ge2$ are spheres. Also, all $7$-dimensional examples have finite fourth cohomology (cf. \cite{eschenburg, goette_pk, berger_space}). Therefore, the answer to the following question appears to be unknown. A positive answer would yield the first example of a closed manifold that admits infinitely many pairwise non-isotopic metrics of positive sectional curvature.

\begin{question}
	Is there a positively curved manifold of dimension $4k+3$, $k\ge1$ with a non-vanishing rational Pontryagin class?
\end{question} 

\noindent For positive Ricci and nonnegative sectional curvature, lots of examples for such manifolds are known, see \pref{Example}{ex:manifolds}.

\subsection{A vanishing result for Morita--Miller--Mumford classes}

We denote by $\diff(M)$ the group of orientation preserving diffeomorphisms of $M$ with $B\!\diff(M)$ the associated classifying space. For any $M$-bundle $\pi\colon E\to B$ with with structure group $\diff(M)$ there is a map
\[H^{4m}(\bo(d);\bbQ) \to H^{{4m}-d}(B;\bbQ)\]
sending a characteristic class $c\in H^{4m}(\bo(d);\bbQ)$ to $\kappa_c(E)\coloneqq\pi_!(c(T_\pi E))$ where $\pi_!\colon H^*(E)\to H^{*-d}(B)$ is the Gysin-homomorphism and $T_\pi E$ is the vertical tangent bundle of $\pi$. $\kappa_c(E)$ is called the \emph{generalized Miller-Morita-Mumford class} or simply \emph{$\kappa$-class} associated to $c$. For the universal $M$-bundle $\pi_M\colon E_M\to B\!\diff(M)$ we hence get universal $\kappa$-classes 
\[\kappa_c\coloneqq \kappa_c(E_M)\in H^{{4m}-d}(B\!\diff(M);\bbQ)\]
Let $\haut(M)/\diff(M)$ be the classifying space for fiber homotopy trivial $M$-bundles. We define the maps $\Psi_{M,m}$ and $\Psi^h_{M,m}$ as follows
\begin{center}
\begin{tikzpicture}
\node(0) at (0,1.2) {$H^{4m}(\bo(d);\bbQ)$};
\node(1) at (7,1.2) {$H^{4m-d}(\bdiff(M);\bbQ)\cong \hom(H_{{4m}-d}(\bdiff(M));\bbQ)$};
\node(2) at (9,0) {$\hom(\pi_{{4m}-d}(\bdiff(M));\bbQ)$};
\node(3) at (9,-1.2) {$\hom(\pi_{{4m}-d}(\haut(M)/\diff(M));\bbQ)$};
\draw[->] (0) to node[auto]{$(\pi_M)_!$} (1);
\draw[->] (9,0.9) to node[auto]{$\hur^*$} (9,0.3);
\draw[->] (9,-0.3) to (9,-0.9);
\draw[->,dashed](0) to node[below, yshift=-5pt, xshift=30pt]{$\Psi_{M,m}$} (2.west);
\draw[->,dashed](0) to node[below]{$\Psi^h_{M,m}$} (3.west);
\end{tikzpicture}
\end{center}
where $\hur$ denotes the Hurewicz-homomorphism. Obviously, $\ker(\Psi_{M,m})\subset \ker(\Psi_{M,m}^h)$. Since the signature of a fiber bundle over $S^k$ vanishes, The Hirzebruch $\calL$-class lies in $\ker(\Psi_{M,m})$ for all $M$ and $m$. The maps $\Psi^h_{M,m}$ and $\Psi_{M,m}$ are both given by $c\mapsto (f\mapsto\scpr{f^*\kappa_c,[S^{4m-d}]})$ and we have
\begin{align*}
	\scpr{f^*\kappa_{c},[S^{4m-d}]} &= \scpr{\kappa_c(E), [S^{4m-d}]}= \scpr{\pi_! c(T_{\pi}^sE),[S^{4m-d}]}\\
		&=\scpr{c(T_{\pi}^sE),[E]}=\scpr{c(\pi^*TS^{4m-d}\oplus T_{\pi}^sE),[E]} = \scpr{c(TE),[E]}
\end{align*}
since $TS^{4m-d}$ is stably parallelizable and hence $c(TS^{4m-d}) = 1$. With this, \pref{Theorem}{thm:main} translates to the following.
\begin{bigthm}\label{main:kappa}
	Let $M$ be simply connected and let $\tau\colon M\to \bo(d)$ be a classifying map for $TM$. Furthermore, let $k\ge1$ be in the unblocking range for $M$ and let $4m=d+k$. Then
	\begin{enumerate}
		\item If $p=p_{i_1}\cdots  p_{i_r}\not=p_m$ is a product of universal Pontryagin classes of degree $4m$, then 
			\[p\in\ker(\Psi_{M,m}^h) \iff \tau^*(p_{i_1}\cdots\widehat{p_{i_\ell}}\cdots p_{i_r})=0\text{ for all } \ell\]
		\item The following are equivalent:
		\begin{enumerate}
			\item$p_m\in\ker(\Psi_{M,m}^h)$
			\item$p_m\in\ker(\Psi_{M,m})$
			\item$\tau^*p_i=0$ for all $i\in\{1,\dots,\floor{\frac  d4}\}$.
		\end{enumerate}
		\item For $\ell>n\ge3$ and $M=\cp{2\ell}\times\cp{n}$, we have 
			\[p_1^{n+\ell}\in \ker(\Psi_{M,(n+\ell)}^h)\setminus\ker(\Psi_{M,(n+\ell)}).\]
	\end{enumerate}
\end{bigthm}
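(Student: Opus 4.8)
The plan is to read off \pref{Theorem}{main:kappa} from \pref{Theorem}{thm:main} by means of the identity $\scpr{f^{*}\kappa_{c},[S^{4m-d}]}=\scpr{c(TE),[E]}$ established in the computation immediately preceding the statement (which only used that $TS^{4m-d}$ is stably parallelizable). Concretely, set $k=4m-d\ge1$. A map $f\colon S^{k}\to\bdiff(M)$ classifies an $M$-bundle $E\to S^{k}$, every $M$-bundle over $S^{k}$ arises this way, and for $\alpha=[f]\in\pi_{k}(\bdiff(M))$ one has $\scpr{\kappa_{c},\hur(\alpha)}=\scpr{f^{*}\kappa_{c},[S^{k}]}$; hence $\Psi_{M,m}(c)=0$ if and only if $\scpr{c(TE),[E]}=0$ for every $M$-bundle $E\to S^{k}$, i.e. if and only if $c$ is \emph{not spherical} for $M$. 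Replacing $\bdiff(M)$ by $\haut(M)/\diff(M)$, the same reasoning gives that $\Psi^{h}_{M,m}(c)=0$ if and only if $c$ is not $h$-spherical for $M$. With this dictionary, \pref{Theorem}{main:kappa} is just the contrapositive of \pref{Theorem}{thm:main}, and the only substantive input is \pref{Theorem}{thm:main} itself (and, for part (iii), \pref{Proposition}{prop:p_1^n}).

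In detail, part (i) is the contrapositive of \pref{Theorem}{thm:main}(i) once one writes $p_{i_{j}}(TM)=\tau^{*}p_{i_{j}}$: the monomial $p=p_{i_{1}}\cdots p_{i_{r}}\ne p_{m}$ fails to be $h$-spherical for $M$ exactly when $\tau^{*}(p_{i_{1}}\cdots\widehat{p_{i_{\ell}}}\cdots p_{i_{r}})=0$ for every $\ell$. Part (ii) is the contrapositive of the equivalence $(a)\Leftrightarrow(b)\Leftrightarrow(c)$ in \pref{Theorem}{thm:main}(ii); the only thing to observe is that ``$M$ admits no nontrivial rational Pontryagin class'' is equivalent to ``$\tau^{*}p_{i}=0$ for all $i\in\{1,\dots,\floor{d/4}\}$'', because $p_{i}(TM)\in H^{4i}(M;\bbQ)$ and this group already vanishes for $4i>d$.

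For part (iii) I would specialise to $M=\cp{2\ell}\times\cp{n}$ with $\ell>n\ge3$, so $d=4\ell+2n$ and $p_{1}^{n+\ell}$ has degree $4(n+\ell)=d+k$ with $k=2n$. One first checks that $k=2n$ lies in the unblocking range for $M$: the bounds $2n\le\frac{d-3}{2}$ and $2n\le\frac{d+2}{3}$ reduce to $n\le 2\ell-\frac{3}{2}$ and $4n\le 4\ell+2$, both guaranteed by $\ell>n\ge3$, so parts (i) and (ii) apply here. Then $p_{1}^{n+\ell}\in\ker(\Psi^{h}_{M,n+\ell})$ follows from part (i): by \pref{Theorem}{thm:main}(i) a positive answer would require $p_{1}(TM)^{n+\ell-1}\ne0$, but this class sits in $H^{4(n+\ell-1)}(M;\bbQ)$, which vanishes since $4(n+\ell-1)>4\ell+2n=d$ whenever $n\ge3$. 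On the other hand $p_{1}^{n+\ell}\notin\ker(\Psi_{M,n+\ell})$ is the statement that $p_{1}^{n+\ell}$ is spherical for $M$, which is \pref{Proposition}{prop:p_1^n} (translated through the dictionary above).

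I do not anticipate a genuine obstacle. The mathematical work is all done in \pref{Theorem}{thm:main} and \pref{Proposition}{prop:p_1^n}; what remains needs only routine care: justifying that $\ker\Psi_{M,m}$ and $\ker\Psi^{h}_{M,m}$ correspond exactly to non-sphericity and non-$h$-sphericity — with nothing lost under $\hur^{*}$, since the pairings only detect spherical homology classes, which are precisely the image of the Hurewicz homomorphism — together with the elementary degree vanishing $H^{4i}(M)=0$ for $4i>d$ used in (ii) and the unblocking-range and degree estimates in (iii).
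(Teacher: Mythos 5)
Your proposal is correct and is essentially the paper's own argument: the paper proves \pref{Theorem}{main:kappa} by exactly the dictionary you describe, namely the computation $\scpr{f^*\kappa_c,[S^{4m-d}]}=\scpr{c(TE),[E]}$ placed just before the statement, after which the theorem is the contrapositive translation of \pref{Theorem}{thm:main} (with part (iii) resting on \pref{Proposition}{prop:p_1^n} via the product bundle, i.e.\ on \pref{Theorem}{thm:main}(iii)). Your added checks (unblocking range for $k=2n$, the degree vanishing $4(n+\ell-1)>d$ for $n\ge3$, and the identification of ``no nontrivial rational Pontryagin class'' with $\tau^*p_i=0$ for $i\le\floor{d/4}$) match what the paper leaves implicit.
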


\noindent Furthermore, we obtain the following bounds on the dimension of $\ker(\Psi_{M,m}^h)$: For $n_{\max}$ and $p(m,\ell)$ as above, we have:
\[p(m, m-\ceil{\frac{d+1}{4}}) +1\le \dim\ker(\Psi_{M,m}^h)\le p(m)-n_{\max}\] 
For both bounds there exist manifolds such that the bounds are attained. 
%
%

\subsection{Rationally fibering a cobordism class over a sphere}
Given an integer $k\ge1$, it is a classical problem that goes back to Conner--Floyd \cite{ConnorFloyd} to decide which cobordism class contains a manifold that fibers over $S^k$. This has been studied in detail for $k\le4$ \cite{Burdick, Neumann, Kahn1, Kahn2}. However, the classical approach relied on identifications like $S^2\cong\cp1$ or $S^4\cong \hp1$ and does not seem to work for larger $k$. \pref{Theorem}{main:upperbound} can be rephrased to state that any rational cobordism class in degrees $4m\ge16$ fibers over $S^4$, provided that the signature vanishes. In the Appendix, written with Jens Reinhold, we consider the question for bigger values of $k$ building on the methods developed in the paper. We show that in a given dimension $d\ge 32$ every (rational) cobordism class in the kernel of the signature homomorphism fibers over $S^k$ for every $k\le 8$. We also obtain results for $k\ge9$, see \pref{Theorem}{MainThmAppendix}.

\subsection{Outline of the argument and obstructions to unblocking of block bundles}

In \cite{KKRW}, Krannich--Kupers--Randal-Williams have proven that the class $\hat\calA_3\in H^{12}(\bo(8);\bbQ)$ is $h$-spherical for $\hp2$. It turns out that their construction delivers an excellent blueprint for our generalization. Since \cite{KKRW} is written rather densely, we decided to give a more detailed account of their argument in \pref{Section}{sec:prelim} before we go on to proving our main results. Let us give an outline of the construction first.

Instead of constructing an actual fiber one constructs a so-called block bundle (we recall the notion of block bundles and block diffeomorphisms in \pref{Section}{sec:prelim}). The advantage of working with block bundles is that the $k$-th homotopy group $\pi_k(\haut(M)/\blockdiff(M))$ of the classifying space for fiber homotopy trivial block bundles is isomorphic to the structure set $\calS_\partial(D^k\times M)$ from surgery theory. Since we assumed that $\dim(M)\ge5$, the latter is accessible through the surgery exact sequence
\begin{equation*}
	L_{k+d+1}(\bbZ\pi_1M)\too\calS_\partial(D^k\times M) \too \calN_\partial(D^k\times M)\overset{\sigma}{\too} L_{k+d}(\bbZ\pi_1M)
\end{equation*}
where $\calN_\partial$ denotes the set of normal invariants. We are interested in the case, where $M$ is simply connected and $(d+k)$ is divisible by $4$, so the $L$-groups are given by $0$ on the left and by $\bbZ$ on the right. Hence, in order to construct an $M$-block bundle it suffices to construct a normal invariant $\eta$ with $\sigma(\eta)=0$. It turns out, that the set of normal invariants is (rationally) isomorphic to the reduced real $K$-theory of $S^k\wedge M_+$ which allows one to construct a normal invariant and hence a (fiber homotopy trivial) block bundle with prescribed Pontryagin classes. Fiber homotopy trivial block bundles over $S^k$ can (rationally) be given the structure of an actual fiber bundle if $k$ is in the unblocking range for $M$. This follows from a classical result of Burghelea--Lashof (cf. \cite{BurgheleaLashof}) and Morlet's Lemma of disjunction together with work of Krannich and Randal-Williams \cite{RW_UpperRange,Krannich_NEW}. 

The main work in the present article lies in choosing appropriate normal invariants. This way we can ensure that certain Pontryagin classes and numbers of the total space of the corresponding block bundle are zero or nonzero. 

One further observation is, that the above construction of block bundles works regardless of the dimension $k$ of the base sphere. The same cannot be true for actual fiber bundles over spheres, since the tangent bundle of the total space is stably isomorphic to its vertical tangent bundle which is a vector bundle of rank $d$. Therefore all Pontryagin classes of degree $*>2d$ vanish. This has been previously observed in \cite{erw_block}, where the authors go on to construct an $\hp 2$-block bundle over $S^{12}$ with $p_5\not=0$ which for the above reason cannot be \enquote{unblocked}. Utilizing the above construction of block bundles we can study this obstruction to \enquote{unblocking} more systematically. Let $I=(i_1,\dots,i_s)$ be such that $|I|\coloneqq i_1+\dots+i_s<m$ and let $M$ be a manifold such that $p_I(TM) = p_{i_1}\cdot\hdots\cdot p_{i_s}(TM)\not=0$. If $I=(0)$ assume instead that $M$ has some non-vanishing rational Pontryagin class. Let
\[\widetilde{P_{M,I}}\colon\pi_{4m-d}(B\blockdiff(M))\otimes\bbQ\too \bbQ\]
be the map sending a block bundle $E$ classified by $f$ to the Pontryagin number $\scpr{p_I(TE)\cup p_{m-|I|}(TE),[E]}$. The following corollary follows from the proof of \pref{Theorem}{thm:main} in \pref{Section}{sec:prescribing}.

\begin{bigcor}\label{main:unblock}
	\begin{enumerate}
		\item The map $\widetilde{P_{M,I}}$ is surjective.
		\item If $m>\frac{d+2|I|}2$, then the following composition is trivial.
	\[\pi_{4m-d}(B\!\diff(M))\otimes\bbQ \too \pi_{4m-d}(B\blockdiff(M))\otimes\bbQ\overset{\widetilde{P_{M,I}}}\too \bbQ\]
		\item For $1\le n\le n_{\max}$ (cf. \pref{Equation}{align:imin}) and $m>\frac{d+2n\cdot i_{\min}}2$ there exists an $n$-dimensional subspace $\calN\subset \pi_{4m-d}(B\blockdiff(M))\otimes\bbQ$ of block-bundles that do not admit the structure of actual fiber bundles.
	\end{enumerate}
\end{bigcor}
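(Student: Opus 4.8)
The plan is to derive all three parts from the block-bundle machinery of \pref{Section}{sec:prescribing} --- the normal-invariant construction behind \pref{Theorem}{thm:main} and \pref{Theorem}{main:lowerbound} --- which, unlike the unblocking step, imposes no restriction on $k$. The only genuine input is thus imported; what remains in the present corollary is a rank computation together with some linear algebra.

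For (i) I would feed the degree-$4m$ monomial $p_I\cup p_{m-|I|}$ into that construction. If $I\ne(0)$ then, as $|I|<m$, this is a product of at least two Pontryagin classes and hence a monomial different from $p_m$; deleting its last factor $p_{m-|I|}$ leaves $p_I(TM)\ne0$ by hypothesis. The argument proving \pref{Theorem}{thm:main}(i) therefore produces --- already at the level of block bundles, before any unblocking --- a fiber homotopy trivial $M$-block bundle $\widetilde E\to S^k$ with $\widetilde{P_{M,I}}(\widetilde E)=\scpr{p_I(T\widetilde E)\cup p_{m-|I|}(T\widetilde E),[\widetilde E]}\ne0$. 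When $I=(0)$ one instead invokes the block-bundle form of the implication $(c)\Rightarrow(a)$ in \pref{Theorem}{thm:main}(ii), whose hypothesis is exactly that $M$ carries a nontrivial rational Pontryagin class. Since $\widetilde{P_{M,I}}\colon\pi_{4m-d}(B\blockdiff(M))\otimes\bbQ\to\bbQ$ is $\bbQ$-linear, one nonzero value forces surjectivity.

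For (ii), let $\pi\colon E\to S^{4m-d}$ be an honest $M$-fiber bundle. Then $TE$ is stably isomorphic to $\pi^*TS^{4m-d}\oplus T_\pi E$, hence --- $S^{4m-d}$ being stably parallelizable --- stably to the rank-$d$ bundle $T_\pi E$. A real vector bundle of rank $d$ has vanishing $p_j$ for $2j>d$ (its complexification has complex rank $d$), and the assumption $m>\tfrac{d+2|I|}2$ gives $2(m-|I|)>d$; thus $p_{m-|I|}(TE)=p_{m-|I|}(T_\pi E)=0$, so $\widetilde{P_{M,I}}(E)=0$. As every class in the image of $\pi_{4m-d}(B\!\diff(M))\otimes\bbQ$ is a $\bbQ$-combination of such bundles, the composition vanishes. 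This rank observation is the only genuinely new point; it is precisely the source of the \enquote{unblocking} obstruction and is what makes the whole corollary work.

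For (iii), write $I(\ell)$ for the multi-index consisting of $\ell$ copies of $i_{\min}$. The construction of \pref{Section}{sec:prescribing} underlying \pref{Theorem}{main:lowerbound} yields --- again with no constraint on $k$, since we stay at block-bundle level --- block bundles $\widetilde E_1,\dots,\widetilde E_{n_\max}$ with $\widetilde{P_{M,I(\ell)}}(\widetilde E_j)\ne0$ if and only if $j=\ell$. Put $\calN\coloneqq\mathrm{span}_\bbQ\{\widetilde E_1,\dots,\widetilde E_n\}\subseteq\pi_{4m-d}(B\blockdiff(M))\otimes\bbQ$; the diagonal system $(\widetilde{P_{M,I(\ell)}})_{\ell=1}^{n}$ separates $\widetilde E_1,\dots,\widetilde E_n$, so $\dim\calN=n$. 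Suppose $0\ne x=\sum_{j=1}^n c_j[\widetilde E_j]$ lay in the image of $\pi_{4m-d}(B\!\diff(M))\otimes\bbQ$, and let $\ell$ be minimal with $c_\ell\ne0$. Since $\ell\le n$ we have $|I(\ell)|=\ell\, i_{\min}<m$ and $m>\tfrac{d+2n\, i_{\min}}2\ge\tfrac{d+2\ell\, i_{\min}}2$, so part (ii) applied with $I=I(\ell)$ gives $\widetilde{P_{M,I(\ell)}}(x)=0$; but the diagonal property gives $\widetilde{P_{M,I(\ell)}}(x)=c_\ell\,\widetilde{P_{M,I(\ell)}}(\widetilde E_\ell)\ne0$, a contradiction. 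Hence no nonzero element of $\calN$ admits the structure of an actual fiber bundle. The main obstacle is thus not in the corollary itself but in the already-established existence results of \pref{Section}{sec:prescribing} that it draws upon.
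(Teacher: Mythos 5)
Your proposal is correct and follows essentially the same route as the paper: part (i) via the block-level realization results of Section~3 (Lemmas \ref{lem:realize-p} and \ref{lem:pmin}), part (ii) via the observation that $T_\pi E$ has rank $d$ so $p_{m-|I|}(TE)=p_{m-|I|}(T_\pi E)=0$ when $2(m-|I|)>d$, and part (iii) by combining the diagonal bundles of Lemma \ref{lem:pmin} with (ii). Your treatment is in fact marginally more careful than the paper's, handling the $I=(0)$ case explicitly and showing that \emph{every} nonzero element of $\calN$ (not just the basis elements) fails to come from an honest fiber bundle.
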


\begin{rem}
	The same is true for fiber homotopy trivial bundles, i.e. the same corollary holds if $B\!\diff(M)$ and $B\blockdiff(M)$ are replaced by $\haut(M)/\diff(M)$ and $\haut(M)/\blockdiff(M)$. 
\end{rem}

\begin{proof}[Proof of \pref{Corollary}{main:unblock}]
\begin{enumerate}
	\item This follows from \pref{Lemma}{lem:realize-p}.
	\item By assumption $m-|I|>\frac{d+2|I|}2 - |I| = \frac d2$ and if $E\to S^{4m-d}$ is a fiber bundle, then $ p_{m-|I|}(TE) =  p_{m-|I|}(T_\pi E)=0$ since $T_\pi E$ is a vector bundle of rank $d$ and therefore its highest possible Pontryagin class is $p_{d/2}$.
	\item By \pref{Lemma}{lem:pmin}, there exist block bundles $E_j$ for $1\le j\le n$ with $\scpr{p_{i_\min}(TE)^j\cup p_{m-j\cdot i_\min}(TE),[E]}\not=0$ which yield linearly independent classes of $\Omega_{4m}\otimes\bbQ$ and hence in $\pi_{4m-d}(B\blockdiff(M))\otimes\bbQ$. Then $m-j\cdot i_\min > \frac{d+2n\cdot i_{\min}}2-n\cdot i_\min = \frac d2$. By the same argument as in (ii), these block bundles do not admit the structure of actual fiber bundles.\qedhere
\end{enumerate}
\end{proof}

\subsection*{Acknowledgements} I would like to thank the anonymous referee for a very thorough reading and for catching various errors and inaccuracies. Further thanks goes to Jens Reinhold for agreeing to jointly write the appendix to this article which grew out of a joint project. I also thank Johannes Ebert for a stimulating discussion about $\kappa$-classes as well as Manuel Krannich and Wolfgang Steimle for their interest and various comments on a draft of this article. Furthermore I thank Marek Kaluba for his interest in the computational aspects from the proof of \pref{Theorem}{MainThmAppendix}, as well as Bernhard Hanke and Jost-Hinrich Eschenburg for valuable remarks.

\section{Preliminaries}\label{sec:prelim}

\noindent Let $M$ be a closed oriented manifold of dimension $d$ and let $\diff(M)$ denote the group of orientation preserving diffeomorphisms of $M$. We denote by $B\!\diff(M)$ the classifying space for fiber bundles $E\to B$ with structure group $\diff(M)$.

\subsection{Block diffeomorphisms}
In this subsection we give a short overview of block bundles and diffeomorphisms and we explain how to compare them to honest fiber bundles and diffeomorphisms. For $p\ge0$ let $\Delta^p$ denote the standard topological $p$-simplex.\footnote{We choose to  follow \cite{erw_block} for this, even though there are more recent expositions on block diffeomorphisms like \cite{Krannich_NEW} or \cite{berglundmadsen} since the latter do not cover block bundles.}

\begin{dfn}
	 A \emph{block diffeomorphism} of $\Delta^p\times M$ is a diffeomorphism of $\Delta^p\times M$ that for each face $\sigma\subset\Delta^p$ restricts to a diffeomorphism of $\sigma\times M$. 
\end{dfn}

\noindent The set of all block diffeomorphisms forms a semisimplicial group denoted by $\blockdiff_{\bullet}(M)$ whose $p$-simplices are the block diffeomorphisms of $\Delta^p\times M$. The space $\blockdiff(M)$ of block diffeomorphisms is defined as the geometric realization of $\blockdiff_\bullet(M)$ and the associated classifying space is denoted by $B\blockdiff(M)$. This space classifies block bundles. Let us recall the definition of a block bundle over a simplicial complex.

\begin{dfn}[{\cite[Definition 2.4]{erw_block}}]
	Let $K$ be a simplicial complex and let $p\colon E\to |K|$ be continuous. A \emph{block chart} for $E$ over a simplex $\sigma\subset K$ is a homeomorphism $h_\sigma\colon p^{-1}(\sigma)\to \sigma\times M$ which for every face $\tau\subset \sigma$ restricts to a homeomorphism $p^{-1}(\tau)\to \tau\times M$. A \emph{block atlas} is a set $\calA$ of block charts, at least one over each simplex of $K$, such that transition functions are block diffeomorphisms. $E$ is called a block bundle if it admits a block atlas.
\end{dfn}

\noindent By \cite[Proposition 3.2]{erw_block} a block bundle $\pi\colon E\to B$ has a stable analogue of the vertical tangent bundle, i.e. there exists a stable vector bundle $T_\pi^sE\to E$ which is stably isomorphic to the vertical tangent bundle $T_\pi E$ provided that $E$ is an actual fiber bundle. If furthermore $B$ is a manifold, the total space $E$ is again a manifold and there is a stable isomorphism $T_{\pi}^sE \oplus \pi^*TB \cong_{\mathrm{st}} TE$ (cf. \cite[Lemma 3.3]{erw_block}). 

Next, let us consider the semisimplicial subgroup $\diff_\bullet(M)$ of those block diffeomorphisms that commute with the projection $\Delta^p\times M\to \Delta^p$. This gives precisely the $p$-simplices of the singular semisimplicial group $\sing_\bullet\diff(M)$. We have an inclusion $\sing_\bullet\diff(M)\subset\blockdiff_\bullet(M)$ and since the geometric realization of $\sing_\bullet(X)$ is homotopy equivalent to $X$ for any space $X$ (\cite[pp. 8]{handbook_at}), we get an induced map
\[B\!\diff(M)\too B\blockdiff(M).\]
\noindent Let $\haut(M)$ denote the group-like topological monoid of (orientation preserving) homotopy equivalences of $M$ with classifying space $B\!\haut(M)$. Again, let $\blockhaut(M)$ be the realization of the semisimplicial group of block homotopy equivalences defined analogously to block diffeomorphisms and let $B\blockhaut(M)$ be the corresponding classifying space. By \cite[Thm 6.1]{dold_partitions} $\haut(M)$ and $\blockhaut(M)$ are homotopy equivalent. Consider the following maps induced by inclusions:
\[B\blockdiff(M) \to B\blockhaut(M)\simeq B\!\haut(M)\qquad B\!\diff(M)\to B\!\haut(M)\]
and let $\haut(M)/\blockdiff(M)$ and $\haut(M)/\diff(M)$ denote the respective homotopy fibers. Note that $\haut(M)/\diff(M)$ (resp. $\haut(M)/\blockdiff(M)$) classifies $M$-bundles (resp. $M$-block bundles) together with a fiberwise (resp. blockwise) homotopy equivalence to the the product $M$-bundle, i.e. \emph{fiber homotopy trivial} $M$-bundles (resp. \emph{blockwise homotopy trivial} $M$-block bundles). We make the following definition: A characteristic class $c\in H^{d+k}(\bo(d);\bbQ)$ is called \emph{block-spherical} (resp. \emph{block-$h$-spherical}), if there exists an $M$-block bundle $E\to S^k$ (resp. a blockwise homotopy trivial one) with $\scpr{c(TE),[E]} \not=0$. We have the following implications:
\begin{center}
\begin{tikzpicture}
	\node (0) at (0,0) {$c$ is $h$-spherical};
	\node (1) at (4.5,-.6) {$c$ is block-$h$-spherical};
	\node (2) at (4.5,.6) {$c$ is spherical};
	\node (3) at (9,0) {$c$ is block-spherical};
	
	\draw[-{Implies},double, bend right=10] (0.south east) to (1.west);
	\draw[-{Implies},double, bend left=10] (0.north east) to (2.west);
	\draw[-{Implies},double, bend right=10] (1.east) to (3.south west);
	\draw[-{Implies},double, bend left=10] (2.east) to (3.north west);
\end{tikzpicture}
\end{center}

\noindent We have the following comparison result which follows from \cite{BurgheleaLashof}. 
\begin{lem}\label{lem:blr}
	If $k\le\min(\frac{d+2}{3},\frac{d-3}{2})$ then the natural map
	\[\pi_k\left(\frac{\haut(M)}{\diff(M)}\right)\left[\frac12\right] \too \pi_k\left(\frac{\haut(M)}{\blockdiff(M)}\right)\left[\frac12\right]\]
	is surjective.
\end{lem}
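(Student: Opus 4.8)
The plan is to reduce to a statement about \emph{diffeomorphism} versus \emph{block-diffeomorphism} groups, to which the Burghelea--Lashof comparison theorem applies directly. Consider the commuting square of fibration sequences obtained by mapping the fibration $\diff(M)\to\haut(M)\to\haut(M)/\diff(M)$ to the fibration $\blockdiff(M)\to\blockhaut(M)\to\haut(M)/\blockdiff(M)$. Since $\haut(M)\simeq\blockhaut(M)$ by \cite{dold_partitions} (as recalled above), the map on total spaces is a weak equivalence; hence the map of homotopy fibers $\haut(M)/\diff(M)\to\haut(M)/\blockdiff(M)$ is, up to homotopy, the map on base spaces induced by $B\!\diff(M)\to B\blockdiff(M)$, equivalently the connecting maps identify it with the delooping of $\diff(M)\to\blockdiff(M)$. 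Therefore, on homotopy groups (after inverting $2$), surjectivity of
\[
\pi_k\!\left(\tfrac{\haut(M)}{\diff(M)}\right)\!\left[\tfrac12\right]\too \pi_k\!\left(\tfrac{\haut(M)}{\blockdiff(M)}\right)\!\left[\tfrac12\right]
\]
is equivalent to surjectivity of $\pi_{k}(B\!\diff(M))[\tfrac12]\to\pi_{k}(B\blockdiff(M))[\tfrac12]$, i.e.\ of $\pi_{k-1}(\diff(M))[\tfrac12]\to\pi_{k-1}(\blockdiff(M))[\tfrac12]$.

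Next I would analyze this last map via the homotopy fiber $\blockdiff(M)/\diff(M)$, whose homotopy groups in the relevant range are controlled by the space of concordances (pseudoisotopies) of $M$, or equivalently by smoothing theory: the classical result of Burghelea--Lashof \cite{BurgheleaLashof} identifies $\pi_j(\blockdiff(M)/\diff(M))$ with a concordance-theoretic group that, after inverting $2$, is a summand of the Whitehead group / stable concordance space and in particular vanishes (rationally, indeed $2$-locally) in a range depending linearly on $d$. The precise bound for which $\pi_{k-1}(\blockdiff(M)/\diff(M))[\tfrac12]=0$, hence for which $\pi_{k-1}(\diff(M))[\tfrac12]\to\pi_{k-1}(\blockdiff(M))[\tfrac12]$ is onto, is exactly $k\le\min\!\big(\tfrac{d+2}{3},\tfrac{d-3}{2}\big)$: the $\tfrac{d+2}{3}$ term is the stable range for concordances (Igusa's stability theorem, giving the concordance stable range $\approx d/3$), while the $\tfrac{d-3}{2}$ term comes from Morlet's lemma of disjunction relating concordances of $M$ to concordances of a disk, i.e.\ the range in which $B\diff_\partial(D^d)\to B\widetilde{\diff}_\partial(D^d)$ behaves. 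I would quote these as black boxes, with the cited work of Krannich and Randal-Williams \cite{RW_UpperRange,Krannich_NEW} pinning down the constants.

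Assembling: by the long exact sequence of the fibration $\diff(M)\to\blockdiff(M)\to\blockdiff(M)/\diff(M)$, surjectivity of $\pi_{k-1}(\diff(M))\to\pi_{k-1}(\blockdiff(M))$ follows once $\pi_{k-1}(\blockdiff(M)/\diff(M))=0$ after inverting $2$, which holds in the stated range. Reversing the first reduction then gives the claim. The main obstacle is purely bookkeeping: correctly matching the two numerical constraints in the hypothesis to the two distinct inputs (Igusa concordance stability for $\tfrac{d+2}{3}$, Morlet disjunction for $\tfrac{d-3}{2}$) and making sure the identification of $\haut(M)/\diff(M)\to\haut(M)/\blockdiff(M)$ with a delooping of $\diff(M)\to\blockdiff(M)$ is done cleanly enough that the $2$-local vanishing range transfers without loss. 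No genuinely hard new argument is needed beyond invoking Burghelea--Lashof and the disjunction lemma.
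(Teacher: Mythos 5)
Your overall strategy---identify the homotopy fiber of $\haut(M)/\diff(M)\to\haut(M)/\blockdiff(M)$ with $\blockdiff(M)/\diff(M)$ and kill the connecting homomorphism---starts out fine (and the first reduction, via the equivalence $\haut(M)\simeq\blockhaut(M)$, works in the direction you need). But the step you rely on to finish is false: you claim that $\pi_{k-1}\bigl(\blockdiff(M)/\diff(M)\bigr)\bigl[\tfrac12\bigr]$ vanishes for all $k\le\min\bigl(\tfrac{d+2}{3},\tfrac{d-3}{2}\bigr)$. In the concordance stable range this group is governed by the concordance/Whitehead space of $M$, which is generically nonzero rationally (its rational homotopy sees $K_*(\bbZ)\otimes\bbQ$, among other things). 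Concretely, for $d$ odd one has $\pi_{j}(\diff_\partial(D^d))\otimes\bbQ\neq0$ for $j\equiv 3\ (4)$ in the stable range (Farrell--Hsiang, and the refinements of Krannich and Randal-Williams cited in the paper), so $\pi_{k-1}\bigl(\blockdiff(M)/\diff(M)\bigr)\otimes\bbQ$ does not vanish for all $k$ in the stated range; this is exactly why the alternative argument in \pref{Remark}{rem:morlet} has to impose the extra hypothesis that $(k-1)$ is not divisible by $4$ when $d$ is odd, a restriction absent from \pref{Lemma}{lem:blr}. A vanishing argument therefore cannot prove the lemma as stated. (You also misattribute the constants: both $\tfrac{d+2}{3}$ and $\tfrac{d-3}{2}$ come from Igusa's pseudoisotopy stable range $\phi(d)\ge\min\bigl(\tfrac{d-4}{3},\tfrac{d-7}{2}\bigr)$ shifted by $2$; Morlet disjunction enters only in the remark, not here.)

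The mechanism the paper uses is different and is the actual content of the Burghelea--Lashof input: their Theorem C7 produces, in the range $\omega\le\phi(d)+1$, a map from a Postnikov stage of $\haut(M)$ (localized away from $2$) to $\blockdiff(M)/\diff(M)$ which splits the natural map. Hence $\pi_{k-1}\bigl(\blockdiff(M)/\diff(M)\bigr)\bigl[\tfrac12\bigr]\to\pi_{k-1}\bigl(\haut(M)/\diff(M)\bigr)\bigl[\tfrac12\bigr]$ is \emph{split injective} in the stated range, so the connecting homomorphism out of $\pi_k\bigl(\haut(M)/\blockdiff(M)\bigr)\bigl[\tfrac12\bigr]$ is zero and surjectivity follows---without any vanishing of $\pi_{k-1}(\blockdiff(M)/\diff(M))$. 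To repair your write-up you would need to replace the vanishing claim by this splitting (injectivity) argument, i.e.\ quote Burghelea--Lashof for the retraction rather than for a vanishing range.
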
 
\begin{rem}
Since the homotopy fiber of the map
\[\haut(M)/\diff(M)\to\haut(M)/\blockdiff(M)\] is given by $\blockdiff(M)/\diff(M)$, the same surjectivity result also holds for 
\[B\diff(M)\to B\blockdiff(M).\]
\end{rem}
\noindent Before we dive into the proof, let us recall the  stable range: For a manifold $M$, possibly with boundary, let $C(M)\coloneqq\{f\colon M\times [0,1]\to M\times [0,1]\text{ diffeomorphism }\colon f|_{M\times\{0\}}=\id\}$ be the space of \emph{pseudoisotopies}. There is a canonical map $e\colon C(M)\to C(M\times I)$ and we define
\[\phi(d)\coloneqq \max\{q\in\bbN\colon e \text{ is $q$-connected for all } M\text{  with } \dim(M)\ge d\}\]
which is called the \emph{pseudoisotopy stable range}. By a classical theorem of Igusa, $\phi(d)\ge\min\left(\frac{d-4}3, \frac{d-7}2\right)$ \cite{igusa_stability}.
\begin{proof}[Proof of \pref{Lemma}{lem:blr}]
	 For $\omega\in\bbN$, let $\haut(M)_{2,\omega}$ be the $\omega$-th stage Moore-Postnikov tower of $\haut(M)$ localized at $2$. By \cite[Theorem C7]{BurgheleaLashof}, there is a map $\haut(M)_{2,\omega}\to\blockdiff(M)/\diff(M)$ such that the projection $q\colon\blockdiff(D^d)\to \blockdiff(D^d)/\diff(D^d)$ factors through this map, provided that $\omega\le\phi(d)+1$. Passing to quotients, we obtain a split 
	 \[\rho\colon\haut(M)_{2,\omega}/\diff(M)\to\blockdiff(M)/\diff(M)\] of the map induced by inclusion. Therefore, we get a split of the long exact sequence 
	 \[\dots\to\pi_{\ell}\left(\frac{\blockdiff(M)}{\diff(M)}\right)\left[\frac12\right]\to \pi_{\ell}\left(\frac{\haut(M)}{\diff(M)}\right)\left[\frac12\right]
	 \to\pi_{\ell}\left(\frac{\haut(M)}{\blockdiff(M)}\right)\left[\frac12\right]\to\dots\] 
	as long as $\ell\le \phi(d)+1$. So the left-hand map is split-injective and the map one further to the left is surjective, provided that $\ell+1\le\phi(d)+2 = \min(\frac{d+2}{3},\frac{d-3}{2})$ which implies the statement of the theorem. 
\end{proof}

\noindent Therefore, an element of $\pi_k(\haut(M)/\blockdiff(M))\otimes\bbQ$ yields an $M$-bundle $E\to S^k$ that is fiber homotopy trivial, provided that the dimension of $M$ is high enough. The advantage of working with $\haut(M)/\blockdiff(M)$ instead of $\haut(M)/\diff(M)$ stems from the fact, that the former is accessible through surgery theory as we will review in the \pref{Section}{sec:surgery}.

\begin{rem}\label{rem:morlet}
	Another approach to compare $B\!\diff(M)$ and $B\blockdiff(M)$ is by using Morlet's lemma of disjunction as in \cite[Lemma]{KKRW}. Consider the following diagram of (homotopy) fibrations
	\begin{center}
		\begin{tikzpicture}
			\node(0) at (0,0) {$B\blockdiff_\partial(D^{d})$};
			\node(1) at (0,1.3) {$B\!\diff_\partial(D^{d})$};
			\node(2) at (4,0) {$B\blockdiff(M)$};
			\node(3) at (4,1.3) {$B\!\diff(M)$};
			\node(4) at (0,2.6) {$\frac{\blockdiff_\partial(D^{d})}{\diff_\partial(D^{d})}$};
			\node(5) at (4,2.6) {$\frac{\blockdiff(M)}{\diff(M)}$};
			
			\draw[->] (1) to (0); 			
			\draw[->] (0) to (2);			
			\draw[->] (1) to (3);			
			\draw[->] (3) to (2);		
			\draw[->] (4) to (5);			
			\draw[->] (5) to (3);			
			\draw[->] (4) to (1);
		\end{tikzpicture}
	\end{center}
	If $M$ is $\ell$-connected with $\ell\le d-4$, then the induced map on homotopy fibers is $(2\ell-2)$-connected by Morlet's lemma of disjunction (cf. \cite[Corollary 3.2 on page 29]{blr}). Now $\pi_k(B\blockdiff_\partial(D^{d}))\cong \pi_0(\diff_\partial(D^{d+k-1}))$ is isomorphic to the finite group of exotic spheres in dimension $(d+k)$. If $d$ is even, then $B\!\diff_\partial(D^{d})$ is rationally $(d-5)$-connected by \cite[Theorem 4.1]{RW_UpperRange}. On the other hand, if $d$ is odd and $k\le d-7$ is not divisible by $4$, then $\pi_{k}(B\!\diff_\partial(D^{d}))\otimes\bbQ$ is trivial by \cite[Corollary B]{Krannich_NEW}. Both \cite{RW_UpperRange} and \cite{Krannich_NEW} are generalizations of a classical result due to Farrell--Hsiang \cite{FarrellHsiang}.
	
	Therefore, in both these cases $\pi_k(\blockdiff(D^d)/\diff(D^d))\otimes \bbQ$ is trivial and the same is true for $\pi_k(\blockdiff(M)/\diff(M))\otimes\bbQ$, provided $k\le 2\ell-2$. This implies that the map 
	\[\pi_{k+1}(B\!\diff(M))\otimes\bbQ\to \pi_{k+1}(B\blockdiff(M))\otimes\bbQ\]
	is surjective which also holds for the induced map 
	\[\pi_{k+1}\left(\frac{\haut(M)}{\diff(M)}\right)\otimes\bbQ \too \pi_{k+1}\left(\frac{\haut(M)}{\blockdiff(M)}\right)\otimes\bbQ\]
	by the five-lemma.
\end{rem}

\noindent We summarize this discussion about unblocking in the following definition and lemma.

\begin{definition}
	Let $M$ be $\ell$-connected for some $\ell\le d-4$. We say that $k\in\bbN$ is \emph{in the unblocking range for $M$}, if one of the following holds.
	\begin{enumerate}
		\item $k\le\min(\frac{d+2}{3},\frac{d-3}{2})$.
		\item $d$ is even and $k\le\min(d-4, 2\ell-1)$.
		\item $d$ is odd, $(k-1)$ is not divisible by $4$ and $k\le\min(d-6,2\ell-1)$.
	\end{enumerate}
\end{definition}

\begin{lem}\label{lem:unblocking}
	Let $M$ be simply connected and let $k$ be in the unblocking range for $M$. Then the following maps are both surjective
	\begin{align*}
		\pi_k\left(\frac{\haut(M)}{\diff(M)}\right)\otimes\bbQ &\too \pi_k\left(\frac{\haut(M)}{\blockdiff(M)}\right)\otimes\bbQ\\
		\pi_k\Bigl(B\!\diff(M)\Bigr)\otimes\bbQ&\too \pi_k\Bigl(B\blockdiff(M)\Bigr)\otimes\bbQ.
	\end{align*}
\end{lem}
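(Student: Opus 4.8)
\textbf{Proof plan for Lemma~\ref{lem:unblocking}.} The statement combines three cases, and the strategy is simply to invoke for each case the appropriate comparison result already assembled in this section, and then to deduce the statement for the homotopy-equivalence quotients from the statement for classifying spaces (or vice versa) using the long exact sequences and the five-lemma. Recall that the homotopy fiber of $B\!\diff(M)\to B\blockdiff(M)$ is $\blockdiff(M)/\diff(M)$, and likewise the homotopy fiber of $\haut(M)/\diff(M)\to\haut(M)/\blockdiff(M)$ is $\blockdiff(M)/\diff(M)$; so surjectivity of the two maps in the statement is governed by the same object, and it suffices to show that the relevant homotopy groups of $\blockdiff(M)/\diff(M)$ behave well rationally in the unblocking range.

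\emph{Case (i).} If $k\le\min(\frac{d+2}{3},\frac{d-3}{2})$, then \pref{Lemma}{lem:blr} directly gives surjectivity of $\pi_k(\haut(M)/\diff(M))[\tfrac12]\to\pi_k(\haut(M)/\blockdiff(M))[\tfrac12]$, hence rationally; and the remark following \pref{Lemma}{lem:blr} gives the same statement for $B\!\diff(M)\to B\blockdiff(M)$. So nothing further is needed here.

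\emph{Cases (ii) and (iii).} Here I would run the Morlet-disjunction argument recorded in \pref{Remark}{rem:morlet}: for $M$ that is $\ell$-connected with $\ell\le d-4$, the square of homotopy fibrations there shows that $\pi_k(\blockdiff(M)/\diff(M))\otimes\bbQ$ is a quotient (through a $(2\ell-2)$-connected map) of $\pi_k(\blockdiff_\partial(D^d)/\diff_\partial(D^d))\otimes\bbQ$ once $k\le 2\ell-2$. In case (ii), $d$ is even, so $B\!\diff_\partial(D^d)$ is rationally $(d-5)$-connected by \cite[Theorem 4.1]{RW_UpperRange}; combined with the fact that $\pi_k(B\blockdiff_\partial(D^d))$ is finite, this forces $\pi_k(\blockdiff_\partial(D^d)/\diff_\partial(D^d))\otimes\bbQ=0$ in the stated range, hence $\pi_k(\blockdiff(M)/\diff(M))\otimes\bbQ=0$ for $k\le\min(d-4,2\ell-1)$ (one checks $k\le 2\ell-2$ and $k\le d-5$ hold after the index shift, exactly as in \pref{Remark}{rem:morlet}). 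In case (iii), $d$ is odd and $k-1$ is not divisible by $4$, so \cite[Corollary B]{Krannich_NEW} gives $\pi_{k}(B\!\diff_\partial(D^d))\otimes\bbQ=0$, yielding the same vanishing of $\pi_k(\blockdiff(M)/\diff(M))\otimes\bbQ$ for $k$ in the asserted range. From the vanishing of this relative term, the long exact sequence of the fibration $\blockdiff(M)/\diff(M)\to B\!\diff(M)\to B\blockdiff(M)$ (after rationalization) gives surjectivity of $\pi_k(B\!\diff(M))\otimes\bbQ\to\pi_k(B\blockdiff(M))\otimes\bbQ$, and the five-lemma applied to the map of long exact sequences comparing the fibrations over $B\!\haut(M)$ transfers this to $\pi_k(\haut(M)/\diff(M))\otimes\bbQ\to\pi_k(\haut(M)/\blockdiff(M))\otimes\bbQ$, exactly as spelled out at the end of \pref{Remark}{rem:morlet}.

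\emph{Main obstacle.} There is no deep obstacle: the mathematical content is entirely contained in \pref{Lemma}{lem:blr} and \pref{Remark}{rem:morlet}, and this lemma is a bookkeeping statement that packages them. The one point requiring care is the index arithmetic — keeping track of the shift between $\pi_k$ of a quotient and $\pi_{k+1}$ of a classifying space, and checking that the three numerical conditions in the definition of the unblocking range are exactly what is needed to stay within the connectivity bounds coming from Igusa's stable range, from \cite[Theorem 4.1]{RW_UpperRange}, and from \cite[Corollary B]{Krannich_NEW} respectively. I would therefore present the proof as: ``Case (i) is \pref{Lemma}{lem:blr} and the remark following it; cases (ii) and (iii) are the content of \pref{Remark}{rem:morlet},'' with a sentence verifying the inequalities in each case.
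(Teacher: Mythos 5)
Your proposal is correct and is essentially the paper's own argument: the lemma is stated there as a summary of the preceding discussion, with case (i) being \pref{Lemma}{lem:blr} and the remark following it, and cases (ii)--(iii) being exactly the Morlet-disjunction argument of \pref{Remark}{rem:morlet} together with \cite[Theorem 4.1]{RW_UpperRange} and \cite[Corollary B]{Krannich_NEW}, transferred between the two fibration sequences by the five-lemma. Just be careful when writing it out that the vanishing of $\pi_j(\blockdiff(M)/\diff(M))\otimes\bbQ$ holds for $j\le\min(d-5,2\ell-2)$ (resp.\ $j\le\min(d-7,2\ell-2)$ with $j\not\equiv0\ (4)$), and it is the degree $k=j+1$ that then satisfies the unblocking-range inequalities, exactly the index shift you flagged.
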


\begin{rem}\label{rem:unblocking}
	Even though this \pref{Lemma}{lem:unblocking} only states surjectivity, this still is enough to completely determine ($h$-)sphericity: Every fiber homotopy trivial bundle also is also a blockwise homotopy trivial block bundle and defined sphericity and $h$-sphericity of those. Therefore, a characteristic class $c\in H^{k+d}(\bo(d);\bbQ)$ is ($h$-)spherical for $M$ if and only if it is block-($h$-)-spherical for $M$, provided that $k$ is in the unblocking range for $M$.
\end{rem}
t

\subsection{Surgery theory}\label{sec:surgery}
Let $X$ be a simply connected manifold of dimension at least $5$ with boundary $\partial X$. The $\emph{structure set}$ $\calS(X,\partial X)$ of $(X,\partial X)$ (sometimes written as $\calS_\partial(X)$) is defined to be the set of equivalence classes of tuples $(W,\partial W,f)$ where $W$ is a manifold with boundary $\partial W$ and $f$ is an orientation preserving homotopy equivalence\footnote{Since we assume $X$ to be simply connected, every homotopy equivalence is simple and we do not need to require this in the definition.} that restricts to a diffeomorphism on the boundary. Two such tuples $(W_0,\partial W_0,f_0)$ and $(W_1,\partial W_1,f_1)$ are equivalent, if there exists a diffeomorphism $\alpha\colon W_0\to W_1$ such that $f_0=f_1\circ \alpha$ on the boundary $\partial W_0$ and on the whole $W_0$, the map $f_1\circ\alpha$ is homotopic to $f_0$ relative to $\partial W_0$ (\cite[Definition 10.2]{surgerybook}). It is a consequence of the $h$-cobordism theorem that for $\dim(M)\ge5$ we have the following isomorphism (\cite[Section 3.2, pp.33]{BerglundMadsen})
\[\pi_k\left(\frac{\haut(M)}{\blockdiff(M)}\right)\cong \calS_\partial(D^k\times M).\]
\noindent The main result of surgery theory is that the structure set $\calS_\partial(D^k\times M)$ fits into an exact sequence of sets known as the \emph{surgery exact sequence} (cf. \cite[Theorem 10.21 and Remark 10.22]{surgerybook}):
\begin{equation}\label{eq:ses}
	L_{k+d+1}(\bbZ)\too\calS_\partial(D^k\times M) \too \calN_\partial(D^k\times M)\overset{\sigma}{\too} L_{k+d}(\bbZ)
\end{equation}

\noindent Here, $\calN_\partial(D^k\times M)$ is the set of \emph{normal invariants} which is given by equivalence classes of tuples $(W, f,\hat f, \xi)$, where $W$ is a $(d+k)$-dimensional manifold with (stable) normal bundle $\nu_W$, $\xi$ is a stable vector bundle over $D^k\times M$ and $f\colon W\to D^k\times M$ is a map of degree $1$ that restricts to a diffeomorphism of the boundary and which is covered by a stable bundle map $\hat f\colon \nu_W\to \nu_{D^k\times M}\oplus\xi$. The equivalence relation is given by bordism of manifolds with cylindrical ends of degree $1$ normal maps(see \cite[Definition 10.7]{surgerybook}). 

Since we only consider simply connected manifolds, the relevant $L$-groups are $4$-periodic and given by (cf. \cite[Theorem 7.96]{surgerybook})
\[L_n(\bbZ) \cong\begin{cases}
	\bbZ &\quad\text{ if } n\equiv0\;(4)\\
	\bbZ/2 &\quad\text{ if } n\equiv2\;(4)\\
	0 &\quad\text{ otherwise}
\end{cases}\]

\noindent In particular, the map $\calS_\partial(D^k\times M) \embeds \calN_\partial(D^k\times M)$ if $(k+d)$ is even. The map $\sigma$ in the surgery exact sequence (\ref{eq:ses}) is the so-called \emph{surgery obstruction map}, which in degrees $d+k\equiv 0\;(4)$ with $k\ge1$ and for simply connected $M$ is given by
\[\sigma(W,f,\hat f,\xi) = \frac18\Bigl(\sign(\underbrace{W\cup (D^k\times M)}_{\eqqcolon W'}) - \sign(S^k\times M)\Bigr) =\frac18\sign(W')\]
where $\sign$ denotes the signature (cf. \cite[Lemma 7.170, Exercise 7.188]{surgerybook}). The signature of $W'$ can be computed via Hirzebruch's signature theorem, which constructs a power series
\begin{align*}
	\calL(x_1,x_2,\dots) ={}&1 + s_1x_1 + \dots + s_ix_i + \dots + s_{i,j}x_i\cdot x_j + \dots\\
		& + s_{i_1,\dots,i_n}x_{i_1}\cdots x_{i_n}+ \dots
\end{align*}
such that $\sign(W')=\scpr{\calL(p_1(TW'),p_2(TW'),\dots),\ [W']}$. Here $p_i(TW')$ are the Pontryagin classes of $W'$. 

In order to further analyze $\calN_\partial(D^k\times M)$, let us define $G(n)=\{f\colon S^{n-1}\to S^{n-1}\text{ homotopy equivalence}\}$ and $\bg\coloneqq \colim_{n\to\infty} \bg(n)$. Note, that the index shift stems from the fact that one wants to have an inclusion $O(n)\subset G(n)$ of the orthogonal group. Analogously, let $\bo\coloneqq \colim_{n\to\infty}\bo(n)$. Note that $\bg$ is the classifying space for stable spherical fibrations whereas $\bo$ is the classifying space for stable vector bundles. The inclusion induces $J(n)\colon \bo(n)\embeds \bg(n)$ which in the colimit yields a map $J\colon \bo\to \bg$ and we denote its homotopy fiber by $\go$. By \cite[Equation 10.11]{surgerybook} there is an identification 
\[\calN_\partial(D^k\times M) \cong [(D^k,S^{k-1})\times M, (\go,\ast)],\]
where $[\_,\_]$ denotes homotopy classes of maps of pairs. For our purpose, we need a more explicit description of this identification, in particular we want to pay attention to the vector bundle data. We follow \cite[Theorem 6.17 and above]{surgerybook} for this. A map into $\go$ consists of a map $\gamma$ into $\bo$ and a homotopy $h$ in $\bg$ from $J\circ \gamma$ to the constant map. Since $D^k\times M$ is compact, $\gamma$ and $h$ actually land in finite stages $\bo(\ell)$ and $\bg(\ell)$. We obtain a vector bundle $\gamma^* U_\ell\to (D^k\times M)/(S^{k-1}\times M)$, where $U_\ell\to \bo(\ell)$ is the universal vector bundle and a trivialization $\overline h\colon S(\gamma^* U_d)\to \underline{S^{\ell-1}}$ as spherical fibrations. Next, we choose an embedding $D^k\times M\embeds \bbR^N_+\coloneqq \{(x_1,\dots, x_N)\colon x_1\ge0\}$ such that $S^{k-1}\times M$ embeds into $\{x_1=0\}$. The relative Pontryagin Thom construction yields a map $(D^N,S^{N-1})\to( \thom(S(\nu_{M\times D^k})),\thom(S(\nu_{M\times S^{k-1}})))$ into the Thom spaces of the respective sphere bundles. Using the map $\overline h$ from above, we can define a map 
\begin{align*}
	(D^{N+\ell},S^{N+\ell-1}) &= (D^\ell,S^{\ell-1})\times (D^N,  S^{N-1}) \too (D^\ell,S^{\ell-1})\times \thom(S(\nu_{D^k\times M}))\\
	\too\quad{}\ &(\thom(S(\nu_{ D^k\times  M})\ast \underline{S^{\ell-1}}),\thom(S(\nu_{ S^{k-1}\times M})\ast \underline{S^{\ell-1}})) \\
	\overset{\thom(\id\ast\overline h^{-1})}\too {}&(\thom(S(\nu_{D^k\times M} \oplus \gamma^*U_\ell)),\thom(S(\nu_{S^{k-1}\times M} \oplus \gamma^*U_\ell))),
\end{align*}
where the  middle map  is induced by the projection $D^\ell\times (D(\nu_{D^k\times M})/S(\nu_{D^k\times M})) \to D(D^\ell\times\nu_{D^k\times M})/S(D^\ell\times\nu_{D^k\times M})$. The reverse of the Pontryagin Thom-construction yields an element $(W, \hat f, f, \gamma^*U_\ell)\in\calN_\partial(D^k\times M)$. Note that the bundle $\xi$ is precisely given by the (stable) vector bundle classified by $\gamma\colon D^k\times M\to \bo(\ell)\to \bo$.

Next, we identify $S^k\wedge M_+ = (S^k\times M_+)/(\{1\}\times M_+\cup S^k\times\{+\}) \cong (D^k\times M)/ (S^{k-1}\times M)$, where $M_+$ is $M$ with a disjoint base point and $\wedge$ denotes the smash product of pointed spaces. The functor $S^k\wedge{(\_)}$ is adjoint to the $k$-fold loop space functor $\Omega^k(\_)$ and so we get $[S^k\wedge M_+, \go]_* \cong [M,\Omega^k \go]$. Now $\Omega^{k+1} \bg$ is the homotopy fiber of the map $\Omega^k\go\to \Omega^k\bo$ and by obstruction theory (cf. \cite[p. 418]{hatcher_at}) the obstructions to the lifting problem
\begin{center}
\begin{tikzpicture}
	\node (0) at (0,0) {$M$};
	\node (1) at (3,0) {$\Omega^k \bo$};
	\node (2) at (3,1.2) {$\Omega^k \go$};
	
	\draw[->] (0) to (1);
	\draw[->] (2) to (1);
	\draw[->, dashed] (0) to (2);	
\end{tikzpicture}
\end{center}
live in the groups $H^{i+1}(M;\pi_i(\Omega^{k+1}\bg))\cong H^{i+1}(M;\pi_{k+i+1}(\bg))$. The homotopy groups $\pi_{k+i+1}(\bg)$ are isomorphic to the shifted stable homotopy groups of spheres $\pi_{k+i}^{st}$ by \cite[p. 135]{surgerybook}. By Serre's finiteness theorem, these groups are finite for $k+i\ge1$ and hence all of our obstruction groups vanish rationally, since we assumed that $k\ge1$. Since $\maps_*(M,\Omega^k \bo)$ is an $H$-space, we see that for every (pointed) map $f\colon M\to \Omega^k\bo$, some multiple of $f$ can be lifted to $\Omega^k \go$. Therefore it suffices for us to specify an element in
\[[(D^k, S^{k-1})\times M, (\bo,\ast)] = {\ko}^0((D^k, S^{k-1})\times M)\]
in order for a multiple of this element to yield a normal invariant $(W,f,\hat f,\xi)$. Furthermore, by the discussion above, the bundle $\xi$ in this normal invariant is precisely given by the multiple of the element in ${\ko}^0((D^k, S^{k-1})\times M)$ and $\xi$ is trivial when restricted to $S^{k-1}\times M$. For such a normal invariant we can hence extend $\xi$ by the trivial bundle to a bundle $\xi'$ over $W'\coloneqq W\cup_f D^k\times M$ and the maps $f$ and $\hat f$ can  be extended by the identity to  a (stable) degree one normal map $(\hat f',f')\colon \nu_{W'}\to\nu_{S^k\times M}\oplus \xi'$. 

Next, we consider the isomorphism given by the Pontryagin character:
\begin{align*}
	\ph(\_)\coloneqq \ch(\_\otimes\bbC)\colon {\ko}^0((D^k,S^{k-1})\times M)\otimes\bbQ\congarrow \bigoplus_{i\ge0} &H^{4i}((D^k,S^{k-1})\times M;\bbQ)\\
		&\cong u_k\times \bigoplus_{i\ge0} H^{4i-k}(M;\bbQ)
\end{align*}
for $u_k$ the cohomological fundamental class in $H^k(D^k,S^{k-1})\cong H^k({S^k,\ast})$. The $i$-th component of the Pontryagin character is given by
\begin{align*}
	\ph_i(\xi) &= \ch_{2i}(\xi\otimes\bbC) = \frac{1}{(2i)!}\Bigl(\bigl(-2i)c_{2i}(\xi) + f(c_1(\xi),\dots, c_{2i-1}(\xi)\bigr)\Bigr)\\
	 &= \frac{(-1)^{i+1}}{(2i-1)!} p_i(\xi) 
\end{align*}
where $f(c_1(\xi),\dots, c_{2i-1}(\xi))$ is a polynomial in Chern classes of $\xi$ homogenous of degree $2i$ which vanishes since all nontrivial products in $H^*((D^k, S^{k-1})\times M;\bbQ)$ are trivial. Hence, for any collection $(x_i)\in H^{4i-k}(M;\bbQ)$ and $(A_i)\in \bbQ$ there exists a $\lambda\in\bbZ\setminus\{0\}$ and a normal invariant $ (W,f,\hat f,\xi)\in\calN_\partial(D^k\times M)$ such that the bundle $\xi$ has the following Pontryagin classes
\[p_i(\xi) = (-1)^{i+1}(2i-1)!\lambda A_i\cdot u_k\times x_i,\]
\noindent where $u_k$ denotes the cohomological fundamental class of $S^k$. We observe, that $p_i(\xi)\cup p_j(\xi)=0$ for $i,j\ge1$, since $u_k^2=0$. Furthermore, $(-1)^{i+1}(2i-1)!\not=0$ for all choices of $i$ and hence, after replacing $A_i$ by $\frac{(-1)^{i+1} A_i}{(2i-1)!}$ we may assume that the Pontryagin classes of $\xi$ have the form\footnote{Note that $\lambda$ depends on the collection $(A_i)$ so we cannot absorb it into the $A_i$'s}
\begin{align}\label{eq:p-classes-xi}
	p_i(\xi) = \lambda A_i\cdot u_k\times x_i
\end{align}
This allows us to construct a normal invariant in the kernel of the signature homomorphism and hence an element of $\pi_k(\haut(M)/\blockdiff(M))$ such that the underlying stable vector bundle has prescribed Pontryagin classes, which we will do in the succeeding section.

With  regard to Pontryagin numbers of the extension $W'$ of $W$, we remark that $p_i(TW') = p_i(-\nu_{W'})$ and $p_i(-(\nu_{S^k\times M}\oplus \xi'))= p_i(T(S^k\times M)\oplus -\xi) = p_i(\pr^*TM\oplus -\xi')$ for $\pr\colon S^k\times M\to M$. Since $\hat f'\colon \nu_{W'}\to\nu_{S^k\times M}\oplus \xi'$ is of degree one, any Pontryagin number of $W'$ equals the corresponding Pontryagin number of $\pr^*TM\oplus-\xi'$. Furthermore, as $\xi'$ is trivial on the complement of $W\subset W'$, the Pontryagin numbers of $\xi'$ are obtained from the ones of $\xi$ by replacing the fundamental class in $H^k(D^k,S^{d-1})$ by the corresponding one in $H^k(S^k,\ast)$ in \pref{Equation}{eq:p-classes-xi}.

%

\section{Prescribing Pontryagin classes}\label{sec:prescribing}

\noindent In this section we will prove the block-analogues of our main results. Recall that a characteristic class $c\in H^{d+k}(\bo(d);\bbQ)$ is called \emph{block-spherical (resp. block-$h$-spherical) for $M$} if there exists an $M$-block-bundle $E \to S^k$ (resp. a blockwise homotopy trivial one) such that $\scpr{c(E),[E]}\not=0$.

\subsection{Proof of \pref{Theorem}{thm:main}}

\pref{Theorem}{thm:main}, (i) follows from the following \enquote{block-version} in combination with \pref{Lemma}{lem:unblocking} (see also \pref{Remark}{rem:unblocking}).

\begin{lem}\label{lem:realize-p}
	Let $4m=d+k$ and let $p_m\not=p=p_{i_1}\cdot\hdots\cdot p_{i_s}\in H^{4m}(BSO;\bbQ)$ be a monomial in universal Pontryagin classes. Then 
	\[p\text{ is block-$h$-spherical} \quad\iff\quad \begin{matrix}\text{There exists an }\ell\ge1\text{ such that }\colon i_\ell\ge\frac k4\text{ and } \\p_{i_1}(TM)\cup\hdots\cup\widehat{p_{i_\ell}(TM)}\cup\hdots\cup p_{i_s}(TM) \not=0\end{matrix}\]
\end{lem}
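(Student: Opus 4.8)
The plan is to use the explicit surgery-theoretic construction from \pref{Section}{sec:surgery}, which shows that for any collection of classes $x_i \in H^{4i-k}(M;\bbQ)$ and scalars $A_i$ there is a $\lambda \neq 0$ and a normal invariant $(W, f, \hat f, \xi) \in \calN_\partial(D^k \times M)$ whose underlying bundle $\xi$ has $p_i(\xi) = \lambda A_i \cdot u_k \times x_i$, together with the computation that all Pontryagin numbers of the glued-up manifold $W' = W \cup_f (D^k \times M)$ agree with those of $\pr^*TM \oplus -\xi'$. Since the $L$-group obstruction in degree $d+k \equiv 0 \pmod 4$ is just the signature $\tfrac18\sign(W')$, to land in the structure set — and hence obtain a blockwise homotopy trivial $M$-block bundle over $S^k$ — I need to choose the $x_i$ and $A_i$ so that $\sign(W') = 0$ while arranging $\scpr{p(TW'),[W']} \neq 0$; the bundle $E \to S^k$ is then the total space of the block bundle classified by the resulting element of $\pi_k(\haut(M)/\blockdiff(M)) \cong \calS_\partial(D^k \times M)$, and $W' = E$ as manifolds.

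For the ``$\Leftarrow$'' direction, suppose $i_\ell \geq k/4$ and $q := p_{i_1}(TM)\cup\cdots\cup\widehat{p_{i_\ell}(TM)}\cup\cdots\cup p_{i_s}(TM) \neq 0$ in $H^{4(m-i_\ell)}(M;\bbQ)$. Note $4i_\ell - k = 4i_\ell - (4m-d)$; the hypothesis $i_\ell \geq k/4$ guarantees $4i_\ell - k \geq 0$, so the group $H^{4i_\ell - k}(M;\bbQ)$ can be nonzero, and in fact since $q \cup (\text{something in } H^{4i_\ell - k}) $ should pair nontrivially with $[M]$, I pick $x_{i_\ell} \in H^{4i_\ell - k}(M;\bbQ)$ with $\scpr{q \cup x_{i_\ell}, [M]} \neq 0$ (this exists by Poincaré duality since $q \neq 0$ and $\deg q = d - (4i_\ell - k)$), and set all other $x_i = 0$. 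With this choice $\xi$ has only $p_{i_\ell}(\xi) = \lambda A_{i_\ell}\, u_k \times x_{i_\ell}$ nonzero. Expanding $p(TW') = p(\pr^*TM \oplus -\xi')$ and using $u_k^2 = 0$ (so at most one factor of $\xi'$ survives in any monomial), the only contribution to $\scpr{p(TW'),[W']}$ is $\pm \lambda A_{i_\ell} \scpr{(u_k \times x_{i_\ell}) \cup \pr^*(p_{i_1}(TM)\cdots\widehat{p_{i_\ell}(TM)}\cdots p_{i_s}(TM)), [S^k \times M]} = \pm\lambda A_{i_\ell}\scpr{q \cup x_{i_\ell},[M]} \neq 0$. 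One must also check the signature vanishes: $\sign(W')$ is computed by the $\calL$-genus, which is a polynomial in the $p_i(TW')$; by the same $u_k^2 = 0$ argument, $\scpr{\calL(TW'),[W']}$ is a linear function of the $A_i$, so either it already vanishes or I can first rescale/translate the $A_i$ (adding a correction supported on a component $x_i$ chosen in the ``large $i$'' range, which does not affect the monomial $p$) to kill it — this is the one genuinely fiddly bookkeeping point, and it is where the hypothesis that $p \neq p_m$ is used, since the freedom to adjust $A_i$ without disturbing $\scpr{p(TW'),[W']}$ requires at least one Pontryagin class not appearing in $p$.

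For the ``$\Rightarrow$'' direction, suppose $E \to S^k$ is a blockwise homotopy trivial $M$-block bundle with $\scpr{p(TE),[E]} \neq 0$. By the structure-set description, $E$ arises as $W'$ for some normal invariant with $\sign(W') = 0$, and $p_i(TE) = p_i(\pr^*TM \oplus -\xi)$ up to the fundamental-class substitution; writing $p_i(TE) = \pr^*p_i(TM) + (\text{terms involving } p_j(\xi))$ and again invoking $u_k^2 = 0$ to discard products of two $\xi$-classes, we get $p(TE) = \pr^*p(TM) + \sum_\ell \pm p_{i_\ell}(\xi) \cup \pr^*(p_{i_1}(TM)\cdots\widehat{p_{i_\ell}(TM)}\cdots p_{i_s}(TM))$. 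The first term pairs to zero with $[E]=[S^k\times M]$ for dimension reasons (it is pulled back from $M$, dimension $d < d+k$), and $p_{i_\ell}(\xi)$ is a multiple of $u_k \times x$ for some $x \in H^{4i_\ell - k}(M;\bbQ)$ — so this group must be nonzero, forcing $4i_\ell - k \geq 0$, i.e. $i_\ell \geq k/4$ — and the pairing $\scpr{(u_k\times x)\cup \pr^*(p_{i_1}(TM)\cdots\widehat{p_{i_\ell}(TM)}\cdots p_{i_s}(TM)),[E]}$ can be nonzero only if $p_{i_1}(TM)\cdots\widehat{p_{i_\ell}(TM)}\cdots p_{i_s}(TM) \neq 0$. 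Thus some $\ell$ satisfies both required conditions. I expect the main obstacle to be the signature-cancellation argument in the ``$\Leftarrow$'' direction: one has to be careful that the adjustment killing $\sign(W')$ genuinely exists (it relies on $\scpr{\calL(TW'),[W']}$ being affine-linear in the parameters, with a nonzero coefficient on some parameter whose associated monomial is not $p$), and that it leaves the target Pontryagin number untouched — this is precisely the place where one sees why $p_m$ must be excluded.
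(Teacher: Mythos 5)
Your overall route is the same as the paper's (normal invariants with prescribed Pontryagin classes via the rational identification of $\calN_\partial(D^k\times M)$ with $\ko$-theory, the signature as the only surgery obstruction, and Pontryagin numbers of $W'$ computed from $\pr^*TM\oplus-\xi'$), but there is a genuine gap in your \enquote{$\Leftarrow$} direction. When you expand $p(\pr^*TM\oplus-\xi')=\prod_j\sum_{n=0}^{i_j}p_n(-\xi')\cup p_{i_j-n}(TM)$ with $p_{i_\ell}(-\xi')$ the only nonzero positive-degree class of $\xi'$, the class $p_{i_\ell}(-\xi')$ can also be taken from a factor with $i_j>i_\ell$: besides your \enquote{full replacement} term you get $p_{i_\ell}(-\xi')\cup p_{i_j-i_\ell}(TM)\cup\prod_{q\neq j}p_{i_q}(TM)$ for every $j$ with $i_j>i_\ell$. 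These are again multiples of $\lambda A_{i_\ell}\,u_k\times(\cdot)$, so the total pairing is $\lambda A_{i_\ell}\scpr{x_{i_\ell}\cup c,[M]}$ with $c=a_\ell\,q+\sum_{j:\,i_j>i_\ell}p_{i_j-i_\ell}(TM)\cup\prod_{q\neq j}p_{i_q}(TM)$, where $a_\ell$ is the number of $j$ with $i_j=i_\ell$. Nothing in your choice of $x_{i_\ell}$ (which only guarantees $\scpr{q\cup x_{i_\ell},[M]}\neq0$) prevents this sum from cancelling; indeed $c$ itself could vanish. The paper closes exactly this hole by choosing $\ell$ so that $i_\ell$ is \emph{maximal} among indices with $i_\ell\ge k/4$ and nonvanishing omitted product: then every $j$ with $i_j>i_\ell$ also has $i_j\ge k/4$, so maximality forces $\prod_{q\neq j}p_{i_q}(TM)=0$ and all cross terms vanish identically. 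Without this (or an equivalent) choice your computation does not establish $\scpr{p(TW'),[W']}\neq0$.

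Two smaller points. In the \enquote{$\Rightarrow$} direction your displayed formula for $p(TE)$ records only the summands in which a whole factor $p_{i_j}$ is replaced by a class of $\xi$; the correct expansion also contains $p_n(-\xi)\cup p_{i_j-n}(TM)\cup\prod_{r\neq j}p_{i_r}(TM)$ for $0<n<i_j$. Your conclusion survives, because a nonzero such term still forces $i_j\ge n\ge k/4$ and $\prod_{r\neq j}p_{i_r}(TM)\neq0$, but the argument should be run with the full expansion (as the paper does, in contrapositive form). Finally, the signature-cancellation step needs to be made precise: the paper adds the top class $p_m(-\xi)=\lambda A\,u_k\times u_M$ and uses that the leading coefficient $s_m$ of $\calL$ is nonzero, so $\sigma(\eta)=\lambda(s_mA+z_0)$ with $z_0$ independent of $A$ and a suitable $A$ kills it; since $p\neq p_m$ forces $s\ge2$ and hence all $i_j<m$, this extra class never enters $p(\pr^*TM\oplus-\xi')$. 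An unspecified correction \enquote{in the large $i$ range} is not enough: for $i<m$ it is neither automatic that its coefficient in the signature is nonzero, nor that it leaves $p(TW')$ unchanged unless $i$ exceeds every $i_j$.
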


\begin{proof}
	We first show the \enquote{$\Leftarrow$} implication. Let $\ell$ be such that $i_\ell$ is maximal with the property above and let $x\in H^*(M;\bbQ)$ be such that 
	\[p_{i_1}(TM)\cup\hdots\cup\widehat{p_{i_\ell}(TM)}\cup\hdots\cup p_{i_s}(TM)\cup x = u_M\in H^d(M;\bbQ).\]
	By the discussion in \pref{Section}{sec:surgery} (in particular, see \pref{Equation}{eq:p-classes-xi}) there exists a normal invariant $\eta$ such that the underlying (extended) stable vector bundle $\xi\to S^k\times M$ has the following Pontryagin classes:
	\begin{align*}
		p_0(-\xi) &= 1 \qquad	p_{i_\ell}(-\xi) = \lambda \cdot u_k\times x\\
		p_m(-\xi) &= \lambda A\cdot u_k\times u_M
	\end{align*}
	for $u_k$ the cohomological fundamental class of $S^k$, $A\in\bbQ$ to be chosen later and $\lambda\in\bbZ\setminus\{0\}$ determined by $A$. All other Pontryagin classes of $\xi$ vanish. Note that by assumption $i_\ell<m$. Then\footnote{Since we are only interested in rational Pontryagin classes the Whitney sum formula $p(V\oplus W)=p(V)\cup p(W)$ holds.}	
	\reqnomode
	\begin{align}\label{eq:1}
		p_{i_1}\cup\hdots\cup &p_{i_s}(\pr^*TM\oplus -\xi) = \prod_{j=1}^s\sum_{n=0}^{i_j} p_n(-\xi)\cup p_{i_j-n}(\pr^*TM)
	\end{align}
	for $\pr\colon S^k\times M\to M$ the projection. Since $p_i(\pr^*TM)=\pr^*p_i(TM)=1\times p_i(TM)$, we will from now on omit \enquote{$\pr^*$} in our computations. Since $i_j<m$ and by our choice of Pontryagin classes of $\xi$, the expression $p_n(-\xi)\cup p_{i_j-n}(TM)$ is nonzero only if $n=0$ or if $n=i_\ell$. Furthermore, recall that $p_n(-\xi)\cup p_{n'}(-\xi)=0$ for $n,n'\ge1$. We continue the computation
	\begin{align}\label{eq:2}
		(\ref{eq:1})&= \prod_{i_j\ge i_\ell} \left(p_{i_j}(TM) + p_{i_\ell}(-\xi)\cup p_{i_j-i_\ell}(TM)\right)\cup\prod_{i_j<i_\ell}p_{i_j}(TM)\notag\\
			&=\prod_{j=1}^sp_{i_j}(TM) + \prod_{i_j<i_\ell}p_{i_j}(TM) \cup\sum_{i_j\ge i_\ell}\left(p_{i_\ell}(-\xi)\cup p_{i_j-i_\ell}(TM)\cup \prod_{q\not=j}p_{i_q}(TM)\right)
	\end{align}
	where the second equality holds after multiplying out and using the fact that there can only be one factor $p_{i_\ell}(-\xi)$ in each summand. The first summand vanishes for degree reasons. If $i_j>i_\ell$, then $\prod_{q\not=j}p_{i_q}(TM)=0$ because we chose $i_\ell$ to be maximal such that this product does not vanish. Hence, the latter factor vanishes if $i_j>i_\ell$ and we get:
	\begin{align*}
			(\ref{eq:2})&= p_{i_\ell}(-\xi)\cup \prod_{i_j<i_\ell}p_{i_j}(TM) \cup\sum_{j\colon i_j= i_\ell}\left(\prod_{q\not=j}p_{i_q}(TM)\right)\\
			&=\underbrace{p_{i_\ell}(-\xi)}_{=\lambda\cdot x\times u_k}\cup\prod_{q\not=\ell}p_{i_q}(TM) \cdot\underbrace{\sum_{j\colon i_j=i_\ell} 1}_{\eqqcolon a_\ell\not=0} = \lambda a_\ell \cdot u_k\times u_M \not=0
	\end{align*}
	Finally, we need to choose $A$, such that the surgery obstruction vanishes. We have 
	\[p_m(TM\oplus -\xi) = p_m(-\xi) + p_{m-i_\ell}(TM)\cup p_{i_\ell}(-\xi),\]
Consider Hirzebruch's signature formula:
	\begin{align*}
	\sigma(\eta) ={}& \sign(W') = \scpr{\calL(W'),\ [W']} = \scpr{\calL(S^k)\cdot\calL(TM\oplus-\xi),\ [S^k\times M]}\\
		={}& s_m\cdot \scpr{p_m(TM\oplus-\xi),\ [S^k\times M]}\\
		&\quad + \scpr{\calL(TM\oplus-\xi)-s_m\cdot p_m(TM\oplus\xi),\ [S^k\times M]}\\
		={}&s_m\cdot\scpr{p_m(-\xi),[S^k\times M]}\\
		&+\underbrace{\scpr{\calL(TM\oplus-\xi)-s_m\cdot p_m(TM\oplus\xi) + s_m\cdot p_{m-i_\ell}(TM)\cup p_{i_\ell}(-\xi)}}_{\eqqcolon z}\\
		={}& s_m\lambda\cdot A + z, 
	\end{align*}
	where $s_m\not=0$ is the leading coefficient of $\calL$. Note that $z=\lambda\cdot z_0$ for some $z_0$ which is independent of $A$. This is true, since there appears precisely one factor $p_{i_\ell}(\xi)$ (which is a multiple of $\lambda$) in every monomial summand of $(\calL(TM\oplus\xi)-s_mp_m(TM\oplus \xi)+s_m\cdot p_{m-i_\ell}(TM)\cup p_{i_\ell}(-\xi))$ while all other factors are coefficients of $\calL$ or Pontryagin classes of $M$ which are independent of $\lambda$. We choose $A\coloneqq \frac{z_0}{s_m}$ so that $\sigma(\eta)$ vanishes independently of $\lambda$. We hence obtain a normal invariant with vanishing signature and therefore a block bundle with the desired properties.
	
	For the other implication \enquote{$\Rightarrow$} let us assume that $p_{i_1}(TM)\cup\hdots\cup\widehat{p_{i_\ell}(TM)}\cup\hdots\cup p_{i_s}(TM)=0$ for all $i_\ell\ge \frac k4$. 
	\[p_{i_1}\cup\hdots\cup p_{i_s}(TM\oplus -\xi) = \prod_{j=1}^s\sum_{n=0}^{i_j} p_n(-\xi)\cup p_{i_j-n}(TM)\]
	Since $p_n(\xi)\cup p_{n'}(\xi)=0$ for $n,n'\ge1$ and $p_{i_1}(TM)\cdots p_{i_s}(TM)=0$ (for degree reasons), every summand in the above expression must contain precisely on factor $p_n(\xi)$ for some $n\ge1$. Hence, multiplying out the above delivers
	\begin{align*}
		p_{i_1}\cup\hdots\cup &p_{i_s}(TM\oplus -\xi) = \sum_{j=1}^s\sum_{n=1}^{i_j}p_n(-\xi)\cup p_{i_j-n}(TM)\cup  \prod_{r\not= j}p_{i_r}(TM).
	\end{align*}
	The product $\prod_{r\not= j}p_{i_r}(TM)$ vanishes if $i_j\ge\frac k4$ by assumption. If $i_j<\frac k4$, then $p_n(-\xi)=0$ for all $1\le n\le i_j$ since every higher Pontryagin class of $\xi$ is of the form $u_k\times *$ and hence is of index at least $k/4$. It follows that there is no normal invariant with $p_{i_1}\cup \cdots \cup p_{i_s}(TW)\not=0$ and hence there can be no such block bundle.
\end{proof}

\noindent Next we turn to the proof of \pref{Theorem}{thm:main}(ii) and \pref{Theorem}{main:lowerbound} which will again follow from block-analogues thereof combined with \pref{Lemma}{lem:unblocking}. Recall the following definitions 
	\begin{align}\label{align:imin}
	\begin{split}
		i_{\min}&\coloneqq \min\{i\ge1 \colon p_i(TM)\not=0\}\\
		n_{\max}&\coloneqq \max\{n\in\bbN \colon p_{i_{\min}}(TM)^n\not=0\}.
	\end{split}
	\end{align}
	We assume that $M$ admits at least on nontrivial rational Pontryagin class, so the set $\{i\ge1 \colon p_i(TM)\not=0\}$ is actually non-empty and $n_{\max}\ge1$.

\begin{lem}\label{lem:pmin}
	For every $\ell=1,\dots, n_{\max}$ there exists a normal invariant $\eta_\ell$ with underlying (extended) stable vector bundle $\xi_\ell\to S^k\times M$ with the following property:
	\begin{align*}
		\scpr{p_{i_{min}}^r(TM\oplus -\xi_\ell)\cup &p_{m-r\cdot i_{\min}}(TM\oplus -\xi_\ell),\ [S^k\times M]}\not=0\quad\iff\quad \substack{r=\ell \\\text{ or } r=0}
	\end{align*}
	and $\sigma(\eta_\ell)=0$. For $\ell=1$ we furthermore have that $\xi_1$ can be chosen such that 
	\[\scpr{p_{i_{\min}}(TM\oplus -\xi_1)\cup p_{m-i_{\min}}(TM\oplus -\xi_1),\ [S^k\times M]}\text{ and } \scpr{p_{m}(TM\oplus -\xi_1),\ [S^k\times M]}\]
	are the \emph{only} non-vanishing monomial Pontryagin numbers of $TM\oplus -\xi_1$.
\end{lem}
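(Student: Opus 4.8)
The plan is to realise $\eta_\ell$ through the surgery-theoretic construction of \pref{Section}{sec:surgery} by prescribing the Pontryagin classes of the underlying stable bundle $\xi_\ell$ in a carefully chosen finite set of degrees and then fixing the top one so that the surgery obstruction vanishes. Recall from \pref{Equation}{eq:p-classes-xi} that for any choice of cohomology classes $y_i\in H^{4i-k}(M;\bbQ)$ (one for each $i\ge\frac k4$) and any rationals $A_i$ there is a $\lambda\in\bbZ\setminus\{0\}$ and a normal invariant whose extended stable bundle satisfies $p_i(-\xi)=\lambda A_i\,u_k\times y_i$ in the prescribed degrees and has all other Pontryagin classes trivial; moreover $p_i(-\xi)\cup p_j(-\xi)=0$ for $i,j\ge1$ because $u_k^2=0$. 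Finally, exactly as in the proof of \pref{Lemma}{lem:realize-p}, the condition $\sigma(\eta_\ell)=0$ is, up to a nonzero constant, the condition $\scpr{\calL(\pr^*TM\oplus-\xi_\ell),[S^k\times M]}=0$, which is affine in $A_m$ with nonzero slope $\lambda s_m$; so we keep $p_m(-\xi_\ell)$ free and fix it last. Throughout write $V\coloneqq\pr^*TM\oplus-\xi_\ell$ and $P\coloneqq p_{i_{\min}}(TM)$, and use that $H^{4m}(M;\bbQ)=0$ (since $4m=d+k>d$), so that a degree-$4m$ class on $S^k\times M$ is detected by its $u_k\times u_M$-coefficient, i.e. by pairing with $[M]$ after dividing off $u_k$.

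Assume first that $m>(\ell+1)i_{\min}$. I would begin by prescribing $p_{m-\ell i_{\min}}(-\xi_\ell)=\lambda\,u_k\times y_\ell$, with $y_\ell\in H^{d-4\ell i_{\min}}(M;\bbQ)$ chosen — using $P^\ell\ne0$ (valid because $\ell\le n_{\max}$) and Poincaré duality — so that $P^\ell\cup y_\ell=u_M$; this is admissible since $m-\ell i_{\min}\ge\frac k4$. As $p_j(TM)=0$ for $1\le j<i_{\min}$, the Whitney formula gives $p_{i_{\min}}(V)=P$, hence $p_{i_{\min}}(V)^r=P^r$; moreover $p_{m-ri_{\min}}(V)=p_{m-ri_{\min}}(TM)$ for $r>\ell$, while $p_{m-\ell i_{\min}}(V)=p_{m-\ell i_{\min}}(TM)+\lambda\,u_k\times y_\ell$. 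Thus the $r$-th number of the lemma vanishes for $r>\ell$ (the relevant class lies in $H^{4m}(M)=0$ and there is no $\xi_\ell$-contribution) and equals $\lambda\scpr{P^\ell\cup y_\ell,[M]}=\lambda\ne0$ for $r=\ell$. Next I would kill the numbers for $1\le r<\ell$ by a downward induction on $j=\ell-1,\dots,1$, at step $j$ additionally prescribing $p_{m-ji_{\min}}(-\xi_\ell)=\lambda\,u_k\times y_j$. The crucial bookkeeping point is that $p_{m-ji_{\min}}(-\xi_\ell)$ contributes to $p_{m-ri_{\min}}(V)$ only for $r\le j$, so it does not disturb the numbers already arranged for $r>j$; and its contribution to the $r=j$ number is $\lambda\scpr{P^j\cup y_j,[M]}$ up to a quantity fixed by the previous steps, which — using $P^j\ne0$ (as $j<\ell\le n_{\max}$) and Poincaré duality — can be made to cancel that quantity by a suitable choice of $y_j$. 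After all $\ell$ steps the $r$-th number is nonzero exactly for $r=\ell$ among $r\ge1$. Lastly, fixing $p_m(-\xi_\ell)=\lambda A_m\,u_k\times u_M$ with $A_m$ chosen so that $\sigma(\eta_\ell)=0$, we note that $p_m(-\xi_\ell)$ does not contribute to $p_{m-ri_{\min}}(V)$ for $r\ge1$, so nothing above is disturbed; a short computation (using the non-vanishing of the relevant Hirzebruch coefficient, exactly as in \pref{Lemma}{lem:realize-p}) shows that the resulting $r=0$ number $\scpr{p_m(V),[S^k\times M]}$ is nonzero. Together with \pref{Lemma}{lem:unblocking} this yields the first assertion.

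For $\ell=1$ no downward corrections are needed and the construction gives more: with only $p_{m-i_{\min}}(-\xi_1)=\lambda\,u_k\times y_1$ (satisfying $P\cup y_1=u_M$) and $p_m(-\xi_1)$ prescribed, every monomial Pontryagin number $\scpr{p_{j_1}(V)\cdots p_{j_t}(V),[S^k\times M]}$ with $\sum j_a=m$ vanishes unless exactly one factor contributes a $\xi_1$-class, since all purely $TM$-contributions lie in $H^{4m}(M)=0$ and $p_{m-i_{\min}}(-\xi_1)^2=0=p_{m-i_{\min}}(-\xi_1)\cup p_m(-\xi_1)$. If $t\ge2$ and $j_1$ is the distinguished index, the remaining $t-1$ parts are pulled back from $M$, are each $\ge i_{\min}$ by minimality, and sum to $m-j_1\le i_{\min}$; hence $t=2$ and the partition is $(m-i_{\min},i_{\min})$, which gives the number $\lambda\scpr{P\cup y_1,[M]}=\lambda$. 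The only length-$1$ contribution is $\scpr{p_m(V),[S^k\times M]}$. Thus $\scpr{p_{i_{\min}}(TE_1)\cup p_{m-i_{\min}}(TE_1),[E_1]}$ and $\scpr{p_m(TE_1),[E_1]}$ are the only non-vanishing monomial Pontryagin numbers of $V$, as claimed.

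The step I expect to require the most care is precisely this bookkeeping — checking that no prescribed correction interferes with a number already arranged, and in particular the boundary situations where $m-(\ell+1)i_{\min}$ or $m-(j+1)i_{\min}$ drops below $i_{\min}$, or where $m=(\ell+1)i_{\min}$ so that $p_{m-\ell i_{\min}}(-\xi_\ell)$ collides in degree with $p_{i_{\min}}(-\xi_\ell)$; in that last case one instead prescribes $p_{i_{\min}}(-\xi_\ell)$ directly and re-verifies the vanishing of all unwanted numbers. The other point needing attention is the non-vanishing of the pertinent coefficient of the Hirzebruch $\calL$-polynomial, which enters the $r=0$ claim.
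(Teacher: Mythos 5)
Your construction is essentially the paper's own: you prescribe the classes $p_{m-j\cdot i_{\min}}(-\xi_\ell)$ of the bundle underlying a normal invariant and use that such a class only affects the numbers with $r\le j$ — which is precisely the upper-triangularity of the linear system the paper solves (with diagonal entry $s+1$ in the collision case $m=(s+1)i_{\min}$ that you correctly flag) — then fix $p_m(-\xi_\ell)$ last to kill the surgery obstruction via $s_m\ne0$, and for $\ell=1$ run the same count of surviving monomial numbers combined with the signature identity. The only point where you are breezier than warranted is the claim that the $r=0$ number is nonzero for general $\ell$ (your ``short computation''): the vanishing of $\sigma$ forces $\scpr{p_m(TM\oplus-\xi_\ell),[S^k\times M]}\ne0$ only when all the other monomial Pontryagin numbers entering the $\calL$-identity are controlled, which is exactly the $\ell=1$ situation — and indeed the paper itself only spells this argument out for $\ell=1$.
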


\begin{proof}
	Let $u_M\in H^{4m-k}(M;\bbQ)$ denote the cohomological fundamental class of $M$. Since the cup product induces a perfect pairing 
	\[H^{4j}(M;\bbQ)\times H^{4(m-j)-k}(M;\bbQ)\to\bbQ,\]
	there exists a class $x\coloneqq x_{n_{\max}}\in H^{4(m-i_{\min}\cdot n_{\max})-k}(M;\bbQ)$ such that $x\cup p_{i_{\min}}(TM)^{n_{\max}}=u_M$. For $r = 0,\dots, n_{\max}$ we define $x_r\coloneqq x \cup p_{i_{\min}}(TM)^{n_{\max}-r}$. Then 
	\[x_r\cup p_{i_{\min}}(TM)^r = x \cup p_{i_{\min}}(TM)^{n_{\max}-r}\cup p_{i_{\min}}(TM)^r = u_M.\]
	By the discussion in \pref{Section}{sec:prelim} we know that for every collection $A_0,\dots, A_{n_{\max}}\in\bbQ$ there exists a $\lambda\in\bbZ\setminus\{0\}$ and a normal invariant $\eta_\ell=(W_\ell,f_\ell,\hat f_\ell,\xi_\ell)$ such that the (extended) stable vector bundle $\xi_\ell'$ has only the following (rational) Pontryagin classes:
	\begin{align*}
		p_0(-\xi_\ell') &= 1\\
		p_{m-r\cdot i_{\min}}(-\xi_\ell') &= \lambda A_r \cdot u_k\times x_r \text{ for } r=0,\dots, n_{\max}	\end{align*} 
	\noindent Since $r\cdot i_{\min}<m$ and $p_q(TM)=0$ for all $0<q<i_{\min}$, we have
	\begin{align*}
		p_{i_{min}}(TM\oplus-\xi_\ell') &= \sum_{a=0}^{i_{\min}} p_a(TM)\cup p_{i_{\min}-a}(-\xi_\ell')
			= p_{i_{\min}} (-\xi_\ell') + p_{i_{\min}} (TM) 
	\end{align*}
	We will now distinguish two cases: $m=(s+1)\cdot i_{\min}$ for some $1\le s\le n_\max$ and $m\not=(r+1)\cdot i_{\min}$ for all $r$. In the former case we have $p_{i_{\min}}(-\xi_\ell')=\lambda A_{s}\cdot u_k\times x_{s}$ and compute:
	\begin{align*}
		p_{i_{\min}}(TM\oplus &-\xi_\ell')^{s}\cup p_{\underbrace{m-s\cdot i_{\min}}_{=i_{\min}}}(TM\oplus -\xi_\ell') = p_{i_{\min}}(TM\oplus -\xi_\ell')^{s +1}\\
			&= (p_{i_{\min}} (-\xi_\ell') + p_{i_{\min}} (TM) )^{s+1} =(s+1)\cdot p_{i_{\min}}(TM)^{n_{\max}}\cup p_{i_{\min}}(-\xi_\ell')\\
			&=(s+1)\lambda A_{s}\cdot \underbrace{p_{i_{\min}}(TM)^{s}\cup u_k\times x_{s}}_{=u_k\cdot u_M} 
	\end{align*}
	where the third equality follows from multiplying out and the fact that $p_n(\xi)\cup p_{n'}(\xi)=0$ for $n,n'\ge1$. For $0\le r\not=s$ we have:
	\begin{align}\label{eq:computation1}
		p_{i_{\min}}(TM&\oplus -\xi_\ell')^r \cup p_{m-i_{\min}\cdot r}(TM\oplus -\xi_\ell)\notag\\
			={}& \bigl(p_{i_{\min}} (-\xi_\ell') + p_{i_{\min}} (TM) \bigr)^r\cup \sum_{a=0}^{m-i_{\min}\cdot r}p_a(TM) \cup p_{{m-i_{\min}\cdot r}-a}(-\xi_\ell')\notag\\
			={}& \bigl(p_{i_{\min}} (-\xi_\ell') + p_{i_{\min}} (TM) \bigr)^r\\
				&\cup\Bigl(p_{{m-i_{\min}\cdot r}}(-\xi_\ell') + \sum_{a=i_{\min}}^{m-i_{\min}\cdot r}p_a(TM) \cup p_{{m-i_{\min}\cdot r}-a}(-\xi_\ell')\Bigr)\notag\\
			={}& \underbrace{ p_{i_{\min}} (TM)^r\cup \lambda A_r u_k\times x_r}_{ = \lambda A_r \cdot u_k\times u_M}\ +\ \ast\cdot u_k\times u_M,\notag
	\end{align}
	where $\ast$ is a linear expression in the variables $\lambda A_{r+1},\dots,\lambda A_{n_{\max}}$. Now for $b,a_1,\dots, a_{n_{\max}}\in\bbQ$, consider the following system of equations
	\begin{align*}
		b	&=	\scpr{p_{m}(TM\oplus-\xi_\ell'), [S^k\times M]}\\
		a_1 & = \scpr{p_{i_{\min}}(TM\oplus-\xi_\ell')\cup p_{m-i_{\min}}(TM\oplus-\xi_\ell'), [S^k\times M]}\\
			&\qquad\vdots\\
		a_r & = \scpr{p_{i_{\min}}(TM\oplus-\xi_\ell')^r\cup p_{m-r\cdot i_{\min}}(TM\oplus-\xi_\ell'), [S^k\times M]}\\
			&\qquad\vdots\\
		a_{n_{\max}} &= \scpr{p_{i_{\min}}(TM\oplus-\xi_\ell')^{n_{\max}}\cup p_{i_{\min}}(TM\oplus-\xi_\ell'), [S^k\times M]} 
	\end{align*}
	By the above computation this is a linear system of equations in the variables $\lambda A_0,\lambda A_1,\dots, \lambda A_{n_{\max}}$ and it has the following form
	\begin{align*}
		\underbrace{\begin{pmatrix}
			1 & \ast & \ast & \ast & \ast \\
			0 & \ddots & \ast & \ast & \ast  \\
			0 & 0 & s+1 &  \ast & \ast \\
			0 & 0 & 0 & \ddots &\ast  \\
			0 & 0 & 0 & 0 & 1 \\
		\end{pmatrix}}_{\eqqcolon B}
		\cdot
		\begin{pmatrix}
			\lambda A_0\\\vdots\\\lambda A_{n_{\max}}
		\end{pmatrix}
		=
		\begin{pmatrix}
			b\\a_1\\\vdots\\a_{n_{\max}}
		\end{pmatrix}
	\end{align*}
	with $s+1$ in the $s$-th row. The matrix $B$ is invertible and hence, we can choose $A_1,\dots A_{n_{\max}}$ such that $a_i=0$ if and only if $i\not=\ell$. Note, that $\lambda$ is not yet determined as it also depends on $A_0$, but the condition of $a_i$ being zero or nonzero is independent of $\lambda$. Furthermore note, that since $B$ is triangular, the values of $a_i$ are independent of $A_0$. Finally, we need to choose $A_0$, such that the surgery obstruction vanishes. Consider Hirzebruch's signature formula:
	\begin{align*}
	\sigma(\eta_\ell) &= \sign(W_\ell') = \scpr{\calL(W_\ell'),\ [W_\ell']} = \scpr{\calL(W_\ell'),\ f_*[S^k\times M]}\\
		&= \scpr{\calL(S^k)\calL(TM\oplus\xi_\ell),\ [S^k\times M]} = \scpr{\calL(TM\oplus\xi_\ell),\ [S^k\times M]} \\
		&= s_m\cdot \lambda\cdot A_0 + \lambda\cdot z_0 
	\end{align*}
	where $z_0$ is some number independent of $A_0$ and $s_m$ is the leading coefficient of $\calL$ as in the proof of \pref{Lemma}{lem:realize-p}. Since $s_m\not=0$, we can choose $A_0\coloneqq \frac{z_0}{s_m}$ so that $\sigma(\eta_\ell)$ vanishes independently of $\lambda$. 
	
	The case $m\not=(r+1)\cdot i_{\min}$ for all $r$ is very similar. By the same computation as (\ref{eq:computation1}) we have for all $r\ge0$:
	\begin{align*}
p_{i_{\min}}(TM&\oplus -\xi_\ell')^r \cup p_{m-i_{\min}\cdot r}(TM\oplus -\xi_\ell)\\
	={}& \underbrace{p_{i_{\min}} (TM)^r\cup \lambda A_r \cup u_k\times x_r}_{ = \lambda A_r \cdot u_k\times u_M}\ +\ \ast\cdot u_k\times u_M,
	\end{align*}
	This implies that the respective matrix $B$ has the same form as above with $s+1$ replaced by $1$. The rest of the argument is verbatim to the first case.
	
	If $\ell=1$ the above system of linear equations reduces to $a_2=a_3=\dots=a_{n_\max}=0$. From the shape of $B$, this implies that $A_2=\dots=A_{n_\max}$ must be $0$ as well and hence bundle $\xi_1'$ only has three non-vanishing Pontryagin classes, namely
	\begin{align*}
		p_0(\xi_1') &=1 \\
		p_{m-i_{\min}}(-\xi_1') &= \lambda A_1 \cdot u_k\times x\\
		p_{m}(-\xi_1') &= \lambda A_0 \cdot u_k\times u_M
	\end{align*}
	As noted above, every Pontryagin number of $TM\oplus-\xi_1'$ contains precisely one Pontryagin class of $\xi_1'$. Since $p_{i_{min}}(TM)$ is the smallest Pontryagin class of $M$, we deduce that the only possibly non-vanishing Pontryagin numbers of $TM\oplus-\xi_1'$ are $\scpr{p_{i_{\min}}(TM\oplus -\xi_1')\cup p_{m-i_{\min}}(TM\oplus -\xi_1'),\ [S^k\times M]}$ and $ \scpr{p_{m}(TM\oplus -\xi_1'),\ [S^k\times M]}$. By construction, 
	\[\scpr{p_{i_{\min}}(TM\oplus -\xi_1')\cup p_{m-i_{\min}}(TM\oplus -\xi_1'),\ [S^k\times M]}\not=0\]
	Since $\sigma(\eta_1)=0$ by construction, we deduce that $\scpr{p_{m}(TM\oplus -\xi_1'),\ [S^k\times M]}$ is nonzero as well, since the leading coefficient in the $\calL$-polynomial is nontrivial by \cite[p. 12]{hirzebruch_topological}.
\end{proof}

\subsection{Proof of \pref{Corollary}{cor:lowerbound}, (ii)}\label{sec:proofnmax}
We need to show $\dim\fib_{M,k}\le n_\max$. Let $p_I[E]\coloneqq\scpr{p_I(TE),[E]}$ be a monomial Pontryagin number of a fiber bundle $M\overset{\iota}\to E^{4m}\overset{\pi}\to S^k$. 
We consider the Wang-sequence:
\[\dots\too H^n(M)\overset{\delta}\too H^{k+n}(E)\overset{\iota^*}\too H^{k+n}(M)\too \dots.\]  Furthermore,
\[\iota^*p_i(TE) = \iota^*p_i(\pi^*TS^k\oplus T_{\pi}E) = \iota^*p_i(T_{\pi}E) = p_i(\iota^*T_{\pi}E) = p_i(TM)\]
for $T_{\pi}E$ the vertical tangent bundle of $E$. The fiber homotopy trivialization $h\colon E\to M\times S^k$ yields $s\coloneqq \pr_M\circ h\colon E\to M$ and $(s\circ \iota)^*TM=TM$. Therefore $p_i(TM)=\iota^*p_i(s^*TM)$ and by the above computation it follows for all $i$ that $(p_i(TE)-p_i(s^*TM))=\delta(x_i)$ for some $x_i\in H^{4i-k}(M)$. Since products of elements in the image of $\delta$ vanish by \cite[p. 337, Corollary 3.3]{whitehead_elements}, we have
\[p_I(TE) = \prod_{i\in I}(\delta(x_i)+p_i(s^*TM)) = s^*p_I(TM) + \sum_{i\in I}\delta(x_i)\cup s^*p_{I\setminus \{i\}}(TM).\]
The first summand vanishes for degree reasons and $p_{I\setminus \{i\}}(s^*TM)=a_i\cdot p_{i_\min}(s^*TM)^{n_i} = a_i\cdot (p_{i_\min}(TE) - \delta(x_{i_\min}))^{n_i}$ for some $a_i\in\bbQ$ and $n_i=\frac{m-i}{i_\min}$ by our assumption on $M$. Therefore,
\begin{align*}
	p_I(TE)={}&\sum_{i\in I}a_i\cdot\delta(x_i)\cup p_{i_\min}(TE)^{n_i} \\
		={}& \sum_{i\in I}a_i\cdot(p_i(TE) - p_i(s^*TM))\cup p_{i_\min}(TE)^{n_i}\\
		={}& \sum_{i\in I}a_i\cdot p_i(TE)\cup p_{i_\min}(TE)^{n_i} - \sum_{i\in I}a_i\cdot p_i(s^*TM)\cup p_{i_\min}(TE)^{n_i}
\end{align*}
For the  second sum in this formula, we note  that there exist a $b_i\in \bbQ$ such that for $m_i=\frac{i}{i_\min}$ we have 
\begin{align*}
	\sum_{i\in I}a_i&\cdot p_i(s^*TM)\cup p_{i_\min}(TE)^{n_i} \\
	={}&\sum_{i\in I}a_i b_i p_{i_\min}(s^*TM)^{m_i}\cup (\delta(x_{i_\min}) + p_{i_\min}(s^*TM))^{n_i}\\
	={}&\sum_{i\in I}a_i b_i \binom{n_i}1\delta(x_{i_\min})\cup p_{i_\min}(s^*TM)^{m_i+n_i-1}.
\end{align*}
Note that $m_i+n_i=\frac{m}{i_\min}\eqqcolon \ell$ is independent of $i$ and we compute
\begin{align*}
	\sum_{i\in I}a_i&\cdot p_i(s^*TM)\cup p_{i_\min}(TE)^{n_i} \\
	={}&\delta(x_{i_\min})\cup p_{i_\min}(s^*TM)^{\ell-1}\sum_{i\in I}a_i b_i n_i\\
	={}&\left(\delta(x_{i_\min})+ p_{i_\min}(s^*TM)\right)^{\ell}\underbrace{\frac1\ell\sum_{i\in I}a_i b_i n_i }_{\eqqcolon \lambda} = \lambda\cdot p_{i_\min}(TE)^\ell.
\end{align*}
and in total
\[p_I(TE) = \sum_{i\in I}a_i\cdot p_i(TE)\cup p_{i_\min}(TE)^{n_i}-\lambda\cdot p_{i_\min}(TE)^\ell. \]
Since $a_i$ and $\lambda$ do not depend on $E$ but only on $M$ und $I$, the restriction of the functional $p_I[\_]$ to $\fib_{M,k}$ is contained in the linear span of the functionals $(p_{i_\min}^e\cup p_{m-e\cdot i_\min}[\_])_{e=0..n_\max}$ and hence $\dim(\fib_{M,k})^*\le n_\max+1$. The signature of any fiber bundle $E\to S^k$ is trivial and equals the $\calL$-polynomial in Pontryagin classes of $TE$ evaluated against $[E]$. Since every coefficient in the $\calL$-polynomial is nontrivial by \cite{BergBerg}, this gives one linear relation among the restricted functionals $(p_{i_\min}^e\cup p_{m-e\cdot i_\min}[\_])_{e=0\dots n_\max}$ and it follows that 
\[\dim\fib_{M,k} = \dim(\fib_{M,k})^*\le n_\max\]
which proves the claim.\hfill$\Box$

\subsection{Bounds on \texorpdfstring{$\fib_{M,k}$}{the dimension of M-bundles over the k-sphere}} Let $\blockfib{M,k}\subset\Omega_{d+k}\otimes\bbQ$ denote the linear subspace spanned by blockwise homotopy trivial $M$-block bundles. By \pref{Lemma}{lem:unblocking}, $\blockfib{M,k}\subset\fib_{M,k}$ and by \pref{Lemma}{lem:pmin}, $\dim(\blockfib{M,k})\ge n_\max$. In this section, we prove the upper bounds claimed in \pref{Theorem}{main:upperbound} or rather its block-analogue.

\begin{prop}\label{prop:upper-obstruction}
	Let $p = p_{i_1}\cup\hdots\cup p_{i_s}\in H^{4m}(BO(d);\bbQ)$. If $i_j<m-\frac{d}{4}$ for all $j\in\{1,\dots,s\}$, $p_{i_1}(TE)\cup\dots\cup p_{i_s}(TE)=0$ for all blockwise homotopy trivial $M$-block bundles $E$.
\end{prop}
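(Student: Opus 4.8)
The strategy is to mimic the structure of the forward ("$\Rightarrow$") direction of \pref{Lemma}{lem:realize-p} but with the degree bookkeeping adapted to the hypothesis $i_j < m - \frac d4$ for \emph{all} $j$, rather than a bound on a single index. So I would start from an arbitrary blockwise homotopy trivial $M$-block bundle $E \to S^k$; by the identification of \pref{Section}{sec:surgery} it arises from a normal invariant $\eta = (W, f, \hat f, \xi)$ with $\sigma(\eta) = 0$ whose underlying extended stable bundle $\xi' \to S^k \times M$ satisfies $p_n(\xi') \cup p_{n'}(\xi') = 0$ for $n, n' \ge 1$ (since each such class is a multiple of $u_k \times (\text{something})$ and $u_k^2 = 0$), and such that $p_n(\xi') = 0$ whenever $n < \frac k4$, because every higher Pontryagin class of $\xi'$ has the form $u_k \times x$ and $u_k \in H^k$ forces $4n \ge k$. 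The Pontryagin numbers of $E$ equal the corresponding Pontryagin numbers of $\pr^*TM \oplus -\xi'$ over $S^k \times M$, exactly as recorded at the end of \pref{Section}{sec:surgery}.

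Next I would expand, using the Whitney formula,
\[
p_{i_1}(\pr^*TM \oplus -\xi') \cup \dots \cup p_{i_s}(\pr^*TM \oplus -\xi') = \prod_{j=1}^{s} \sum_{n=0}^{i_j} p_n(-\xi') \cup p_{i_j - n}(TM).
\]
Because $\sum i_j = m > m - \frac d4 \cdot (\text{anything}) $, in fact the purely $TM$-part $\prod_j p_{i_j}(TM)$ has degree $4m > d$ (here one uses that some $i_j \ge 1$, so $4m = \sum 4 i_j$ with all $i_j < m$ means the product sits in $H^{4m}(M) = 0$), hence it vanishes; and since products of two positive-degree Pontryagin classes of $\xi'$ vanish, every surviving monomial contains exactly one factor $p_n(-\xi')$ with $n \ge 1$. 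Collecting these gives
\[
p_I(\pr^*TM \oplus -\xi') = \sum_{j=1}^{s} \sum_{n=1}^{i_j} p_n(-\xi') \cup p_{i_j - n}(TM) \cup \prod_{r \ne j} p_{i_r}(TM).
\]
Now fix a summand indexed by $(j, n)$. The factor $\prod_{r \ne j} p_{i_r}(TM)$ lives in $H^{4(m - i_j)}(M)$, and since $i_j < m - \frac d4$ we get $4(m - i_j) > d$, so this factor vanishes in $H^*(M;\bbQ)$. (Alternatively: $p_n(-\xi') \ne 0$ forces $n \ge \frac k4$, hence $p_{i_j - n}(TM) \cup \prod_{r\ne j} p_{i_r}(TM)$ has degree $4(m - n) \le 4m - k = 4d$... but the cleaner route is the one just given.) Either way every summand is zero, so $p_I(\pr^*TM \oplus -\xi') = 0$, and therefore $p_{i_1}(TE) \cup \dots \cup p_{i_s}(TE) = 0$.

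The only genuinely delicate point is the degree accounting: one must be careful that the hypothesis $i_j < m - \frac d4$ is exactly what is needed to push $\prod_{r \ne j} p_{i_r}(TM)$ into a vanishing cohomology group of $M$ (degree $4m - 4 i_j > 4m - 4(m - \frac d4) = d$), and to separately handle the edge case where some $i_j = 0$ or $s = 1$ — but if $s = 1$ then $p = p_m$ with $m < m - \frac d4$, which is vacuous since $d \ge 5 > 0$, so no such $p$ exists and the statement is trivially true; and $i_j = 0$ factors can simply be dropped. I do not expect any obstruction beyond this bookkeeping, as the vanishing of $\xi'$-square terms and the low-degree vanishing of $p_n(\xi')$ are already established in the excerpt.
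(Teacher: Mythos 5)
Your argument is correct and is essentially the paper's approach: the paper simply quotes the forward direction of \pref{Lemma}{lem:realize-p} (if $p_I(TE)\neq 0$ for a blockwise homotopy trivial block bundle then some deleted product $p_{i_1}(TM)\cdots\widehat{p_{i_j}(TM)}\cdots p_{i_s}(TM)$ is nonzero) and derives a contradiction from the same degree count $4(m-i_j)>d$, whereas you re-derive that direction by hand via the normal-invariant description of $\xi'$. The content (classes of $\xi'$ of the form $u_k\times x$, exactly one $\xi'$-factor per surviving monomial, and the vanishing of $\prod_{r\ne j}p_{i_r}(TM)$ in degree $4(m-i_j)>d$) coincides, so your proof is just an unpacked version of the paper's.
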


\begin{proof}
	If $p_{i_1}(TE)\cup\dots\cup p_{i_s}(TE)$ were nonzero, we would get an $i_j$ such that $p_{i_1}(TM)\cup\hdots\cup\widehat{p_{i_j}(TM)}\cup\hdots\cup p_{i_s}(TM)\not=0$ by \pref{Lemma}{lem:realize-p}. However the degree of this product is $4(m-{i_j})>d$ and hence the product has to vanish because the cohomology of $M$ vanishes above degree $d$, leading to a contradiction.
\end{proof}

\noindent Recall that $p(n)$ is defined to be the number of partitions and the number of those partitions into natural numbers $\le n'$ is $p(n,n')$. \pref{Proposition}{prop:upper-obstruction} yields the following upper bound.

\begin{lem}\label{lem:blockupperbound}
	$\dim\blockfib{M,4m-d}\le p(m) - p(m, m-\floor{\frac{d+1}{4}}) -1$.
\end{lem}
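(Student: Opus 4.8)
The plan is to bound the dimension of $\blockfib{M,4m-d}$ by counting the monomial Pontryagin numbers of degree $4m$ that are \emph{not} forced to vanish on blockwise homotopy trivial $M$-block bundles, and then to subtract one for the signature relation. The starting point is that $\Omega_{4m}\otimes\bbQ$ is a free $\bbQ$-vector space on the dual basis of monomial Pontryagin numbers $p_I[\,\cdot\,]$ indexed by partitions $I=(i_1,\dots,i_s)$ of $m$, so $\dim\Omega_{4m}\otimes\bbQ = p(m)$. Restricting such a functional to the subspace $\blockfib{M,4m-d}$, \pref{Proposition}{prop:upper-obstruction} tells us that $p_I[\,\cdot\,]$ vanishes identically on $\blockfib{M,4m-d}$ whenever every part $i_j$ satisfies $i_j<m-\frac d4$. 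Hence the only functionals that can be nonzero on $\blockfib{M,4m-d}$ are those indexed by partitions $I$ of $m$ having at least one part $i_j\ge m-\frac d4$.

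The next step is the combinatorial count. I would show that the number of partitions of $m$ with \emph{no} part $\ge m - \frac d4$, i.e.\ with every part $\le m - \ceil{\frac{d+1}{4}} $ (one must be slightly careful translating the strict inequality $i_j < m - d/4$ into a floor/ceiling statement, using $d+k=4m$ so that $m-\tfrac d4 = \tfrac k4$ and the parts are integers), is exactly $p(m, m-\floor{\frac{d+1}{4}})$. Concretely, a part $i_j$ fails the condition $i_j \ge m-\tfrac d4$ precisely when $i_j \le m-\ceil{\frac{d+1}4}=m-\floor{\frac{d+1}4}$ after accounting for the $4$-divisibility; I would verify the index bookkeeping $\ceil{\frac{d+1}{4}}$ vs.\ $\floor{\frac{d+1}{4}}$ carefully against the definition of $p(n,\ell)$ as the number of partitions into parts $\le \ell$. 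Subtracting, the number of partitions of $m$ \emph{with} a part $\ge m-\tfrac d4$ is $p(m) - p(m, m-\floor{\frac{d+1}{4}})$, and therefore $\dim(\blockfib{M,4m-d})^* \le p(m) - p(m,m-\floor{\frac{d+1}{4}})$.

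Finally I would invoke the signature constraint: every $M$-block bundle $E\to S^{4m-d}$ over a sphere of positive dimension has $\sigma(E) = 0$ (this is exactly the condition $\sigma(\eta)=0$ built into the surgery-theoretic construction, and also follows since the signature is multiplicative over a simply connected base). Since $\sigma(E) = \scpr{\calL(p_1(TE),\dots),[E]}$ and, by \cite{BergBerg} (or \cite[p.~12]{hirzebruch_topological}), every monomial coefficient of the Hirzebruch $\calL$-class is nonzero, this expresses a nontrivial linear relation among the monomial Pontryagin number functionals $p_I[\,\cdot\,]$ of degree $4m$; moreover this relation involves the monomial $p_m[\,\cdot\,]$, whose index partition $(m)$ obviously has a part $\ge m-\tfrac d4$, so it is a genuine relation among the surviving functionals and not one already killed by \pref{Proposition}{prop:upper-obstruction}. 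Hence the span of the restricted functionals has dimension at most $p(m) - p(m,m-\floor{\frac{d+1}{4}}) - 1$, and since $\dim\blockfib{M,4m-d} = \dim(\blockfib{M,4m-d})^*$ this gives the claimed bound.

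The main obstacle I anticipate is purely the index arithmetic: pinning down exactly why the strict inequality $i_j<m-\frac d4$ in \pref{Proposition}{prop:upper-obstruction} corresponds to parts $\le m-\floor{\frac{d+1}{4}}$ in the partition count, given that $d$ need not be divisible by $4$ while the parts $i_j$ are integers and $4m=d+k$. One must check the two residue cases $d\equiv 2,3 \pmod 4$ (the cases where the theorem asserts sharpness) and confirm that $\ceil{\frac{d+1}4}=\floor{\frac{d+1}4}$ or differ appropriately so that $m-\frac d4$ rounds to $m-\floor{\frac{d+1}{4}}$ in the definition of $p(m,\ell)$. Everything else — the freeness of $\Omega_*\otimes\bbQ$ on Pontryagin numbers, the nonvanishing of the $\calL$-coefficients, and the vanishing of the signature — is either classical or already established in the excerpt.
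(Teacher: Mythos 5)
Your proposal is correct and is essentially the paper's own argument: the paper treats the lemma as an immediate consequence of \pref{Proposition}{prop:upper-obstruction} (the functionals $p_I[\,\cdot\,]$ with all parts $i_j<m-\frac d4$ vanish on $\blockfib{M,4m-d}$) together with one extra linear relation coming from $\sigma(E)=0$ and the nonvanishing of the $\calL$-coefficients, exactly as you spell out. On the index bookkeeping you flag as your main worry: the translation of $i_j<m-\frac d4$ for integers $i_j$ gives parts $\le m-\ceil{\frac{d+1}{4}}$ in \emph{all} residue classes of $d$ (your asserted equality $\ceil{\frac{d+1}{4}}=\floor{\frac{d+1}{4}}$ only holds for $d\equiv3\;(4)$), so your argument proves the bound $p(m)-p(m,m-\ceil{\frac{d+1}{4}})-1$ as in \pref{Theorem}{main:upperbound} and the corollary following \pref{Lemma}{lem:blocklarge}; the $\floor{\cdot}$ in the lemma's statement is evidently a typo, since for $d\equiv2\;(4)$ the floor version would be strictly smaller and would contradict the sharpness statement for $P$-large manifolds.
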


\subsubsection*{Achieving the upper bound}
\noindent We will now show that this upper bound is sharp, i.e. there exists a manifold for which equality holds. 
\begin{dfn}
	A manifold is said to be \emph{$P$-large} if all monomials in rational Pontryagin classes of $TM$ are linearly independent.
\end{dfn}

\noindent If $\tau\colon M\to \bo(d)$ is the classifying map for the tangent bundle $TM$, then $M$ is $P$-large if and only if the induced map $\tau^*\colon H^*(\bo(d);\bbQ)\to H^*(M;\bbQ)$ is injective for $*\le d$. In  the example below we construct a $P$-large manifold, which can be made simply  connected in dimensions $d\equiv2,3(4)$.

\begin{example}\label{ex:large}
	For $n\ge1$ let $M^n_{1},\dots,M^n_{s_n}$ be a basis for $\Omega_{4n}\otimes\bbQ$ with the property that each of those only has one non-trivial monomial Pontryagin number. Since $\Omega_{*}\otimes\bbQ$ is generated by $\cp{2n}$ we may choose $M^n_i$ to be simply connected. Consider the following $d$-dimensional manifold:
	\[M\coloneqq \coprod_{n=1}^{\frac d4}\coprod_{j=1}^{s_n} M^n_j\times S^{d-4n}\]
	This manifold has all possible products of Pontryagin classes and they are all linearly independent, since $H^*(M;\bbQ) = \bigoplus_{n=1}^{d/4}\bigoplus_{j=1}^{s_n} H^*(M^n_j\times S^{d-4n})$ and for every $I=(i_1,\dots, i_s)$ there is a unique $j\in\{1,\dots, s_{|I|}\}$ such that $p_I(TM^{|I|}_{j})\not=0$. Note that for $d\not\equiv 1\;(4)$ every component of $M$ is simply connected. If $d\equiv 2,3\;(4)$, we can even assume $M$ to be simply connected by performing connected sums. If $d\equiv 0,1\;(4)$ this is not possible since Pontryagin products of top degree would then live in the $1$-dimensional space $H^d(M;\bbQ)$ (resp. in the $0$-dimensional space $H^{d-1}(M;\bbQ)$).	
\end{example}

\noindent For $I=(i_1,\dots, i_s)$ let $|I|:=\sum i_j$ and we introduce the short notation 
\[p_I = p_{i_1}\cup\hdots\cup p_{i_s}.\]

\begin{lem}\label{lem:blocklarge}
	Let $M$ be simply connected and $P$-large and let $I =\{i_1,\dots,i_s\}\not=\{m\}$ with $|I|=m$ and $i_j\ge m-\frac {d}4$ for some $j$. Then there exists a normal invariant $\eta\in \calN_\partial(D^{4m-d}\times M)$ with underlying (extended) stable vector bundle $\xi'$ such that
	\[\scpr{p_I(TM\oplus-\xi'), [S^{4m-d}\times M]} \text{ and } \scpr{p_m(TM\oplus-\xi'), [S^{4m-d}\times M]}\]
	are the \emph{only} non-vanishing monomial Pontryagin numbers of $TM\oplus-\xi'$ and $\sigma(\eta) = 0$.
\end{lem}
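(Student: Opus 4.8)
The plan is to imitate the argument of \pref{Lemma}{lem:pmin} and \pref{Lemma}{lem:realize-p}, but now exploiting the full strength of the $P$-largeness hypothesis to kill all monomial Pontryagin numbers except the two prescribed ones. Fix $I=(i_1,\dots,i_s)$ with $|I|=m$, $s\ge2$ and (after relabeling) $i_s\ge m-\frac d4$, and write $\kappa\coloneqq m-i_s=i_1+\dots+i_{s-1}\le\frac d4$; note $4\kappa-k<d$ so the relevant cohomology of $M$ does not vanish for degree reasons. Since $M$ is $P$-large, the classes $p_{i_1}(TM)\cdots p_{i_{s-1}}(TM)=p_{(i_1,\dots,i_{s-1})}(TM)$ and the various $p_J(TM)$ for partitions $J$ of integers $\le\frac d4$ with at most $s-1$ parts are all nonzero and linearly independent; using the perfect Poincaré pairing on $M$ I choose a class $x\in H^{4\kappa-k}(M;\bbQ)$ with $p_{(i_1,\dots,i_{s-1})}(TM)\cup x=u_M$.

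First I would build, via \pref{Equation}{eq:p-classes-xi}, a normal invariant whose extended stable bundle $\xi'\to S^k\times M$ has $p_0(-\xi')=1$, the single ``intermediate'' class $p_{i_s}(-\xi')=\lambda A_s\cdot u_k\times x$, the top class $p_m(-\xi')=\lambda A_0\cdot u_k\times u_M$, and all other $p_j(-\xi')=0$; here $A_0,A_s\in\bbQ$ are to be fixed later and $\lambda\in\bbZ\setminus\{0\}$ is determined by $(A_0,A_s)$. Exactly as in the cited lemmas, every monomial Pontryagin number of $TM\oplus-\xi'$ over $S^k\times M$ picks up precisely one factor coming from $\xi'$ (since $p_j(-\xi')\cup p_{j'}(-\xi')=0$ for $j,j'\ge1$ and $p_J(TM)$ alone vanishes by degree for $|J|=m>\frac d4$). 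A monomial number of the form $\scpr{p_J(TM\oplus-\xi'),[S^k\times M]}$ with $J=(j_1,\dots,j_t)$ then equals a sum, over the choice of which factor absorbs $\xi'$, of terms $\scpr{p_{j_a-i_s}(TM)\cup\prod_{b\ne a}p_{j_b}(TM)\cup(\lambda A_s\,u_k\times x),[S^k\times M]}$ (the $i_s$-absorbing terms) plus possibly $\scpr{(\lambda A_0\,u_k\times u_M)\cdot(\text{rest}),\dots}$ (the top-class term, which only contributes when $J=(m)$). The point is that for the $i_s$-absorbing term to be nonzero one needs $p_{j_a-i_s}(TM)\cup\prod_{b\ne a}p_{j_b}(TM)\cup x\ne0$, i.e. (comparing with $p_{(i_1,\dots,i_{s-1})}(TM)\cup x=u_M$ and using $P$-largeness) the multiset $\{j_a-i_s\}\cup\{j_b:b\ne a\}$ must equal $\{i_1,\dots,i_{s-1}\}$ as multisets; equivalently $J$ is obtained from $(i_1,\dots,i_{s-1})$ by increasing one entry by $i_s$, which forces $J=I$. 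Hence the only monomial Pontryagin numbers that can be nonzero are $\scpr{p_I(TM\oplus-\xi'),[S^k\times M]}$ and $\scpr{p_m(TM\oplus-\xi'),[S^k\times M]}$.

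It remains to arrange that $\scpr{p_I(TM\oplus-\xi'),[S^k\times M]}\ne0$ and $\sigma(\eta)=0$. Expanding, $\scpr{p_I(TM\oplus-\xi'),[S^k\times M]}=c\cdot\lambda A_s$ where $c\ne0$ is the combinatorial multiplicity (the number of ways to choose the $\xi'$-absorbing factor, times the identification $p_{(i_1,\dots,i_{s-1})}(TM)\cup x=u_M$) — so any choice $A_s\ne0$ makes it nonzero, independently of $\lambda$. For the surgery obstruction, Hirzebruch's signature formula gives $\sigma(\eta)=\scpr{\calL(TM\oplus-\xi'),[S^k\times M]}=s_m\lambda A_0+\lambda z_0$ where $s_m\ne0$ is the leading coefficient of $\calL$ and $z_0$ is independent of $A_0$ (every $\calL$-monomial contributing here contains exactly one $\xi'$-factor, hence is divisible by $\lambda$, and the ones involving $p_m(-\xi')$ are exactly the $A_0$-dependent ones); choosing $A_0\coloneqq z_0/s_m$ kills $\sigma(\eta)$ for every $\lambda$. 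Finally, since $\sigma(\eta)=0$ but $\scpr{p_I(TM\oplus-\xi'),[S^k\times M]}\ne0$ and these are the only two possibly-nonzero monomial numbers, the $\calL$-relation $0=\sigma(\eta)=s_m\scpr{p_m(\cdot),\cdot}+(\text{coeff})\cdot\scpr{p_I(\cdot),\cdot}$ with nonzero coefficients (using \cite[p. 12]{hirzebruch_topological} for $p_I$ and $s_m\ne0$) forces $\scpr{p_m(TM\oplus-\xi'),[S^k\times M]}\ne0$ as well.

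\medskip
\noindent\emph{Main obstacle.} The delicate point is the multiset/$P$-largeness bookkeeping in the second paragraph: one must check carefully that no monomial $p_J$ other than $p_I$ and $p_m$ can absorb the single intermediate class $p_{i_s}(-\xi')$ and pair nontrivially with $x$. This uses that $M$ is $P$-large so that $p_J(TM)\cup x\ne0$ pins down $J$ uniquely via the fundamental-class identity, and that $\xi'$ has only the one intermediate nonzero Pontryagin class $p_{i_s}$ (as opposed to a family, as in \pref{Lemma}{lem:pmin}) — this is precisely why we can afford a single prescribed $p_I$ rather than a whole tower, and why no triangular-system argument is needed here: the ``matrix'' is already diagonal. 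Everything else (the Whitney-sum expansions, the $u_k^2=0$ vanishing of $\xi'$-squares, the $\lambda$-independence of the signature normalization) is routine and identical to the proofs of \pref{Lemma}{lem:realize-p} and \pref{Lemma}{lem:pmin}.
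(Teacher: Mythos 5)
Your reduction to a bundle $\xi'$ with a \emph{single} intermediate Pontryagin class breaks down at the multiset step. First, a small but necessary fix: $P$-largeness plus Poincar\'e duality lets you \emph{choose} $x$ (of degree $d-4\kappa=4i_s-k$, not $4\kappa-k$) so that $x\cup p_{J'}(TM)=0$ for every monomial $J'$ of weight $\kappa$ other than $(i_1,\dots,i_{s-1})$; merely requiring $p_{(i_1,\dots,i_{s-1})}(TM)\cup x=u_M$ does not give this. But even with the dual choice of $x$, the condition for a nonzero absorption term is that the multiset $\{j_a-i_s\}\cup\{j_b\colon b\ne a\}$ equals $\{i_1,\dots,i_{s-1}\}$, and this does \emph{not} force $J=I$: besides adding a new part $i_s$ (which yields $J=I$), one can increase an existing part $i_j$ ($j\le s-1$) by $i_s$, producing the merged partition $J=(i_1,\dots,\widehat{i_j},\dots,i_{s-1},i_j+i_s)$, which for $s\ge3$ differs from both $I$ and $(m)$. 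For such $J$ your $\xi'$ gives a \emph{nonzero} number: e.g.\ for $I=(1,1,i_s)$ and $J=(1,1+i_s)$ one computes $p_J(TM\oplus-\xi')=p_1(TM)\cup p_{1+i_s}(TM)+p_{i_s}(-\xi')\cup p_1(TM)^2=\lambda A_s\cdot u_k\times u_M$, since $x$ is dual to $p_1(TM)^2$; all matching terms enter with coefficient $+1$, so no cancellation occurs. Hence with only one intermediate class the conclusion that $p_I$ and $p_m$ are the only nonvanishing monomial numbers is false, and your claim that ``the matrix is already diagonal and no triangular-system argument is needed'' is precisely where the argument fails (it would only be correct for $s=2$, where the unique merged partition is $(m)$ itself).

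This is exactly why the paper's proof puts the designated class at the \emph{largest} entry $i_1$ of $I$ and, crucially, also prescribes all higher intermediate classes $p_i(-\xi')$ for $i>i_1$ in the form $\lambda\cdot u_{4m-d}\times\sum_{|J|=m-i}A_Jx_J$, with dual classes $x_J$ for \emph{every} partition $J$ of weight $<d/4$, and then fixes the coefficients $A_J$ by downward induction on $|J|$ so as to cancel the extra contributions $B_{I'}$ --- in particular those coming from the merged partitions with largest entry $i_1'>i_1$. Some triangular elimination is therefore unavoidable. The remaining ingredients of your write-up (exactly one $\xi'$-factor per monomial since $u_k^2=0$ and $p_J(TM)$ vanishes by degree, the choice of $A_0$ killing $\sigma(\eta)$ independently of $\lambda$, and deducing $\scpr{p_m(TM\oplus-\xi'),[S^{4m-d}\times M]}\ne0$ from $\sigma(\eta)=0$ together with nonvanishing of the relevant $\calL$-coefficients) do match the paper's proofs of \pref{Lemma}{lem:realize-p} and \pref{Lemma}{lem:pmin}, but they cannot rescue the missing cancellation step.
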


\begin{proof}
	Since the cup product induces a perfect pairing and all monomials in Pontryagin classes of $M$ are linearly independent, there exist elements $x_J\in H^{d-4|J|}(M)$ for every collection $J=(j_1,\dots,j_t)$ with $|J|<\frac d4$ such that $x_J\cup p_{J'}(TM)=u_M\in H^{d}(M)$ is a generator if $J=J'$ and $x_J\cup p_{J'}(TM)=0$ for $J\not=J'$.

	Without loss of generality let $i_1$ be the biggest element of $I$ and let $a_1$ be the number of elements in $I$ equal to $i_1$. By assumption $i_1 \ge m-\frac d4$ and $p_{I\setminus\{i_1\}}(TM)\not=0$, since $M$ is $P$-large. By \pref{Section}{sec:surgery} there exists a normal invariant $\eta$ such that the underlying stable vector bundle $\xi\to S^k\times M$ has the following Pontryagin classes:
	\begin{align*}
		p_0(-\xi') &= 1\\
		p_{i_1}(-\xi') &= \lambda \cdot u_{4m-d}\times x_{I\setminus \{i_1\}}\\
		p_i(-\xi') &= 0 &\text{ for } i<i_1\\
		p_i(-\xi') &= \lambda\cdot u_{4m-d}\times\sum_{J\colon |J| = m-i} A_J x_J &\text{ for } i>i_1
	\end{align*}
	for a generator $u_{4m-d}\in H^{4m-d}(S^{4m-d})$ and $A_J$ to be determined later. Note that for $|J|=m-i$, the degree of $x_J$ is $d-4|J| = 4i-(4m-d)$. The same computation as the first one in the proof of \pref{Lemma}{lem:realize-p} that 
	\[p_I(TM\oplus -\xi')=\lambda a_1\cdot u_{4m-d}\times u_M \not=0.\]
	It remains to show that we can choose $A_J$ such that all other monomial Pontryagin numbers are trivial. Now let $I'=(i_1',\dots,i_t')$ be different collection with again $|I'|=m$, $i_1'$ the maximum and $a_1'$ the number of elements in $I'$ equal to $i_1'$. Then
	\begin{align*}
		p_{I'} (TM\oplus -\xi') &= \prod_{j=1}^t p_{i_j'}(TM\oplus -\xi') = \prod_{j=1}^t \sum_{a=0}^{i_j'} p_a(-\xi')\cup p_{i_j'-a}(TM)\\
			&=\prod_{j=1}^t \left(p_{i_j'}(TM) + \sum_{a=i_1}^{i_j'} p_a(-\xi')\cup p_{i_j'-a}(TM) \right).
	\end{align*}
	If $i_j'<i_1$ for all $j$, then the sum on the right vanishes and for degree reasons so does the entire expression. If $i_1' = i_1$, then we get
	\begin{align*}
		p_{I'} (TM\oplus -\xi') &= \prod_{j\colon i_j'<i_1'} p_{i_j'}(TM)\cup \prod_{j\colon i_j'=i_1'} \left(p_{i_j'}(TM) + p_{i_j'}(-\xi')\right)\\
			&=\underbrace{p_I(TM)}_{=0} + \prod_{j\colon i_j'<i_1'} p_{i_j'}(TM)\cup\left(\sum_{j\colon i_j'=i_1'}p_{i_1}(-\xi')\cup p_{i_1}(TM)^{a_1'-1}\right)\\
			&=p_{i_1}(-\xi')\cup p_{I'\setminus\{i_1\}}(TM)\\
			&=\lambda  \cdot u_{4m-d}\times x_{I\setminus\{i_1\}} \cup p_{I'\setminus\{i_1\}}(TM)
	\end{align*}
	where the third equality follows from the observation that $p_n(\xi)\cup p_{n'}(\xi)=0$ for $n,n'\ge1$.	By the choice of $x_J$, we have that $x_{I\setminus\{i_1\}} \cup p_{I'\setminus\{i_1\}}(TM)=0$ unless $I=I'$ and therefore $p_{I'}(TM\oplus -\xi') = 0$. It remains to investigate the case $i_1' > i_1$. The strategy is to choose the coefficients $A_{J}$ by downwards induction with respect to $|J|$. Note, that we have already chosen $A_J$ for $|J|\ge m-{i_1}$ to be either $0$ or $1$. Let $J = (j_1,\dots,j_r)$ with $|J|\ge1$ and let us assume that $A_{J'}$ is already chosen for all $J'$ with $|J'|>|J|$. By the choice of the Pontryagin classes of $-\xi'$, this implies that $p_i(-\xi')$ is already determined for all $i<4(m-|J|)\eqqcolon i_J$. If there exists a $j\in J$ such that $j>i_J$, we set $A_J=0$. If not, let $I'\coloneqq \{i_J, J\} :=: \{i_1',\dots, i_t'\}$ and note that by assumption $|I'| = 4m$ and $i_1'$ is the largest entry of $I'$. We again denote the number of indices agreeing with $i_1'$ by $a_1'$. We compute 
	\begin{equation}\label{eq:large1}
	\begin{aligned}
		p_{I'} (TM\oplus -\xi') ={}& \prod_{\ell=1}^t \left(p_{i_\ell'}(TM) + \sum_{a=i_1}^{i_\ell'} p_a(-\xi')\cup p_{i_\ell'-a}(TM) \right)\\
			={}&\prod_{\ell\colon i_\ell'= i_1'}\Bigl(p_{i_\ell'}(TM) + p_{i_\ell'}(-\xi') + \sum_{a=i_1}^{i_\ell'-1} p_a(-\xi')\cup p_{i_\ell'-a}(TM)\Bigr)\\
				&\qquad\cup \prod_{\ell\colon i_\ell'< i_1'}\Bigl(p_{i_\ell'}(TM) + \sum_{a=i_1}^{i_\ell'} p_a(-\xi')\cup p_{i_\ell'-a}(TM)\Bigr)\\
	\end{aligned}
	\end{equation}
	Extracting all summands containing $p_{i_1'}(-\xi)$, we obtain
	\begin{align*}
			(\ref{eq:large1})={}&a_1' p_{i_1'}(-\xi') \cup \underbrace{p_{I'\setminus\{i_1'\}}(TM)}_{ = p_J(TM)}\\
				&+ \underbrace{\left(\prod_{\ell\colon i_\ell'= i_1'}p_{i_\ell'}(TM) + \sum_{a=i_1}^{i_\ell'-1} p_a(-\xi')\cup p_{i_\ell'-a}(TM)\right)\cup \prod_{\ell\colon i_\ell'< i_1'}\Bigl(\cdots\Bigr)}_{\eqqcolon \lambda\cdot B_{I'}} 
	\end{align*}
	Note that the highest index of a Pontryagin class of $-\xi'$ appearing in $B_{I'}$ is strictly smaller than $i_1' = i_J$. Hence, it is only dependent on $A_{J'}$ with $|J'|<|J|$ and by our assumption this summand is already determined. We get
	\begin{align*}
		p_{I'} (TM\oplus -\xi') &= a_1'\lambda \cdot u_{4m-d}\times\sum_{\tilde J\colon |\tilde J| = m-i_1'} A_{\tilde J} \underbrace{x_{\tilde J}\cup p_{J}(TM)}_{=\begin{cases} 0 &\text{ if } \tilde J \not= J\\ u_M &\text{ if } \tilde J = J\end{cases}} + \lambda\cdot B_{I'}\\
			& = \lambda\cdot (a_1' A_{J}\cdot u_{4m-d}\times u_M + B_{I'}).
	\end{align*}
	Therefore we can choose $A_{J}$ for all $J$ with $|J| = m-i_J$ such that $p_{I'}(TM\oplus-\xi')=0$ for all $I'$ with $i_1' = i_J$.\footnote{Note that $I'\not=I$ since $i_1'>i_1$ by assumption.} It remains to specify $A_{\{0\}}$ and hence $p_m(-\xi$). By the same argument as in the proof of \pref{Lemma}{lem:pmin} we can choose $A_{\{0\}}$ such that $\sigma(\eta)=0$ which finishes the proof.
\end{proof}

\begin{cor}
	For a simply connected, $P$-large manifold $M$ of dimension $d$ we have
	\[\dim\blockfib{M,4m-d} = p(m) - p(m, m-\ceil{\frac{d+1}{4}}) -1.\]
\end{cor}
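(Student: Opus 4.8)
The plan is to combine the upper bound already proved in \pref{Lemma}{lem:blockupperbound} with a matching lower bound extracted from \pref{Lemma}{lem:blocklarge}. Write $k=4m-d$ and set
\[\calI\coloneqq\bigl\{\,I=(i_1,\dots,i_s):\ s\ge2,\ |I|=m,\ i_j\ge m-\tfrac d4\text{ for some }j\,\bigr\}.\]
A partition of $m$ lies in $\calI\cup\{(m)\}$ precisely when it has a part $>m-\ceil{\tfrac{d+1}{4}}$; using the elementary identity $\floor{d/4}+1=\ceil{(d+1)/4}$ this identifies $|\calI|$ with $p(m)-p(m,m-\ceil{\tfrac{d+1}{4}})-1$. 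So it is enough to produce $|\calI|$ linearly independent classes inside $\blockfib{M,4m-d}$.

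The first step is, for each $I\in\calI$, to invoke \pref{Lemma}{lem:blocklarge}, which yields a normal invariant $\eta_I\in\calN_\partial(D^k\times M)$ with $\sigma(\eta_I)=0$ whose extended stable bundle $\xi_I'$ has $\scpr{p_I(TM\oplus-\xi_I'),[S^k\times M]}\ne0$, this being one of only two possibly nonzero monomial Pontryagin numbers of $TM\oplus-\xi_I'$ (the other being $\scpr{p_m(TM\oplus-\xi_I'),[S^k\times M]}$). Since $k+d+1=4m+1$ is odd we have $L_{4m+1}(\bbZ)=0$, so the vanishing of $\sigma(\eta_I)$ together with exactness of the surgery exact sequence (\ref{eq:ses}) lets one lift $\eta_I$ along $\calS_\partial(D^k\times M)\to\calN_\partial(D^k\times M)$; under $\calS_\partial(D^k\times M)\cong\pi_k(\haut(M)/\blockdiff(M))$ this lift classifies a blockwise homotopy trivial $M$-block bundle $E_I\to S^k$. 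By the computation closing \pref{Section}{sec:surgery}, the monomial Pontryagin numbers of $[E_I]\in\Omega_{4m}\otimes\bbQ$ coincide with those of $TM\oplus-\xi_I'$; in particular $p_I[E_I]\ne0$, while $p_J[E_I]=0$ for every partition $J\ne I$ with at least two parts.

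The second step is to check linear independence of $\{[E_I]\}_{I\in\calI}$ in $\Omega_{4m}\otimes\bbQ$. Since this group is detected by monomial Pontryagin numbers, I would pair the $[E_I]$ with the functionals $p_J[\,\cdot\,]$ for $J\in\calI$: by the previous step the matrix $\bigl(p_J[E_I]\bigr)_{I,J\in\calI}$ is diagonal (no $J\in\calI$ equals $(m)$, so the possibly nonzero number $p_m[E_I]$ never enters) with nonvanishing diagonal, hence invertible. This gives $\dim\blockfib{M,4m-d}\ge|\calI|$, and combined with \pref{Lemma}{lem:blockupperbound} it forces equality.

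I do not expect a genuine obstacle here: the two hard inputs — the bundle constructions and the vanishing behind the upper bound — are already in place as \pref{Lemma}{lem:blocklarge} and \pref{Proposition}{prop:upper-obstruction} (hence \pref{Lemma}{lem:blockupperbound}). The only place asking for care is the bookkeeping, namely confirming that $|\calI|=p(m)-p(m,m-\ceil{\tfrac{d+1}{4}})-1$ and that this equals the upper bound; both reduce to the identity $\floor{d/4}+1=\ceil{(d+1)/4}$ together with the observation that $(m)$ is the unique one-part partition with a part $>m-\ceil{\tfrac{d+1}{4}}$.
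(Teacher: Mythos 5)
Your proposal is correct and follows essentially the same route as the paper: it uses \pref{Lemma}{lem:blocklarge} to produce, for each multi-part partition $I$ with a large part, a blockwise homotopy trivial block bundle $E_I$ whose only nonzero composite Pontryagin number among such partitions is $p_I$, deduces linear independence from the resulting diagonal pairing, and matches this against the upper bound of \pref{Lemma}{lem:blockupperbound} via the same count $|\calI|=p(m)-p(m,m-\ceil{\frac{d+1}{4}})-1$. The only difference is that you spell out explicitly the passage from normal invariant to structure set element through the surgery exact sequence, which the paper leaves implicit in its appeal to \pref{Lemma}{lem:blocklarge} and the setup of \pref{Section}{sec:surgery}.
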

\begin{proof}
	Since the oriented cobordism ring is classified by Pontryagin numbers, the functionals given by evaluating monomial Pontryagin numbers are all linearly independent. Let $\calI_{m,d}\coloneqq\{I=\{i_1,\dots i_s\}\colon I\not=\{m\},\ |I|=m\text{ and } i_j\ge m-d/4\text{ for some $j$}\}$. By \pref{Lemma}{lem:blocklarge}, there exists for every $I\in \calI_{m,d}$ an $M$-block bundle $E_I\to S^k$ such that $\scpr{p_J(E_I), [E]}\not=0$ if and only if $I=J$ for all $J\in\calI_{m,d}$. Therefore, $(E_I)_{I\in\calI_{m,d}}$ is also linearly independent and the claim follows from 
	\[|\calI_{m,d}| = p(m) - p(m, m-\ceil{\frac{d+1}{4}})-1\] 
	and from the upper bound in \pref{Lemma}{lem:blockupperbound}.
\end{proof}

\section{Application to \texorpdfstring{$\calR_C(M)$}{spaces of psc-metrics}}

\subsection{\texorpdfstring{$\hat\calA$}{A-hat}-genus and cross sections with trivial normal bundle}
\noindent For applications to spaces of metrics, we need bundles with non-vanishing $\hat\calA$-genus and a $\Spin$-structure on the total tangent bundle. The following proposition implies \pref{Proposition}{prop:a-hat-multip}.

\begin{prop}\label{prop:bundle}
	Let $M$ be an oriented, simply connected manifold of dimension $d$ and let $M$ that has at least one non-vanishing rational Pontryagin class and let $k$ be in the unblocking range for $M$. If $d+k\equiv0\;(4)$, then there exists a fiber homotopy trivial $M$-bundle $E\to S^k$ that satisfies $\hat\calA(E)\not=0$. 
\end{prop}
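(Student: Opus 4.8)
The plan is to reduce \pref{Proposition}{prop:bundle} to \pref{Theorem}{thm:main}, (ii) together with the computation in \cite[Lemma 2.5]{a-hat-bundles} that the relevant coefficient of the $\ahat$-polynomial in the two Pontryagin classes $p_i p_{m-i}$ and $p_m$ is nonzero. Concretely, let $i\coloneqq i_{\min}$ be the smallest index with $p_i(TM)\neq0$, which exists by hypothesis. By \pref{Theorem}{thm:main}, (ii) (equivalently, its block-analogue \pref{Lemma}{lem:pmin} with $\ell=1$, combined with \pref{Lemma}{lem:unblocking}) there is a fiber homotopy trivial $M$-bundle $E\to S^k$ whose \emph{only} nonzero monomial Pontryagin numbers are $\scpr{p_i(TE)\cup p_{m-i}(TE),[E]}$ and $\scpr{p_m(TE),[E]}$, and both of these are nonzero. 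Writing out $\ahat_m$ as a polynomial in the $p_j$, evaluating $\scpr{\ahat_m(TE),[E]}$ against $[E]$ kills every monomial except those two, so
\[
\ahat(E)=\scpr{\ahat_m(TE),[E]}=\alpha\cdot\scpr{p_i(TE)\cup p_{m-i}(TE),[E]}+\beta\cdot\scpr{p_m(TE),[E]},
\]
where $\alpha,\beta\in\bbQ$ are the corresponding coefficients in the $\ahat$-polynomial.

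The point is then that $\alpha$ and $\beta$ cannot both vanish — in fact $\beta\neq0$ since the top coefficient of the $\ahat$-polynomial is nonzero (as with the $\calL$-polynomial, cited already in the proof of \pref{Lemma}{lem:pmin}), and more refined information about $\alpha$ is exactly what \cite[Lemma 2.5]{a-hat-bundles} supplies. If it so happens that the particular bundle $E$ produced above has $\alpha\cdot a + \beta\cdot b = 0$ for the values $a,b$ of its two Pontryagin numbers, one simply rescales: by \pref{Lemma}{lem:pmin} the pair $(a,b)$ can be prescribed freely (up to an overall nonzero rational multiple coming from $\lambda$), so choose $a,b$ with $\alpha a+\beta b\neq0$, which is possible precisely because $(\alpha,\beta)\neq(0,0)$. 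This yields a fiber homotopy trivial $M$-bundle with $\ahat(E)\neq0$.

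There is one subtlety to address: \pref{Theorem}{thm:main}, (ii) / \pref{Lemma}{lem:pmin} were stated for Pontryagin numbers, but $\ahat$ is a characteristic number of the \emph{stable tangent bundle}, and we are working with $h$-spherical classes, i.e.\ fiber homotopy trivial bundles; both of these are exactly the setting of those results, and the translation $\scpr{c(T_\pi^s E),[E]}=\scpr{c(TE),[E]}$ was already carried out in the discussion preceding \pref{Theorem}{main:kappa}, so no new work is needed there. The main (and only genuine) input beyond \pref{Theorem}{thm:main} is the nonvanishing of the relevant $\ahat$-coefficients, which is not something one wants to recompute by hand and is the reason for invoking \cite[Lemma 2.5]{a-hat-bundles}; everything else is bookkeeping. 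Finally, the $\Spin$-structure on $TE$ is not needed for this proposition — it enters only in \pref{Theorem}{main:curvature} — so the proof ends here, with a remark that the same bundle serves in the $\Spin$ case once $M$ is assumed $\Spin$, because then $TE\cong_{\mathrm{st}} \pi^*TS^k\oplus T_\pi^s E$ is $\Spin$ as $S^k$ and $M$ are.
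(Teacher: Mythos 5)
Your overall strategy is the same as the paper's (produce, via \pref{Lemma}{lem:pmin} with $\ell=1$ and \pref{Lemma}{lem:unblocking}, a fiber homotopy trivial bundle $E\to S^k$ whose only nonzero monomial Pontryagin numbers are $\scpr{p_{i_{\min}}p_{m-i_{\min}},[E]}$ and $\scpr{p_m,[E]}$, then conclude $\ahat(E)\neq0$), but the step where you conclude the nonvanishing has a genuine gap. You claim that \enquote{the pair $(a,b)$ can be prescribed freely (up to an overall nonzero rational multiple)} and that avoiding the kernel of $(\alpha,\beta)$ is therefore possible as soon as $(\alpha,\beta)\neq(0,0)$. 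This is false: every bundle (or block bundle coming from a normal invariant with $\sigma(\eta)=0$) over $S^k$ has vanishing signature, so the achievable pairs are constrained to the line $s_{i_{\min},m-i_{\min}}\,a+s_m\,b=0$, where both $\calL$-coefficients are nonzero (\cite{BergBerg}); indeed, in the proof of \pref{Lemma}{lem:pmin} the coefficient $A_0$, which controls $b=\scpr{p_m(TE),[E]}$, is \emph{not} free but is chosen precisely to kill the surgery obstruction. Rescaling only moves $(a,b)$ along this fixed line through the origin, so it can never change whether $\alpha a+\beta b$ vanishes. If $(\alpha,\beta)$ happened to be proportional to $(s_{i_{\min},m-i_{\min}},s_m)$, then $\ahat(E)=0$ for \emph{every} bundle you can construct this way; knowing $(\alpha,\beta)\neq(0,0)$, or even $\alpha\neq0\neq\beta$, does not rule this out.

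What is actually needed is that the ratio of the $\ahat$-coefficients of $p_{i_{\min}}p_{m-i_{\min}}$ and $p_m$ differs from the ratio of the corresponding $\calL$-coefficients, i.e.\ that the $\ahat$-functional is not proportional to the signature on the two-dimensional space spanned by these monomials. This is exactly the content of \cite[Lemma 2.5]{a-hat-bundles}, which is what the paper's proof invokes: a closed oriented $4m$-manifold with vanishing signature whose only (possibly) nonzero monomial Pontryagin numbers are $\scpr{p_ip_{m-i}}$ and $\scpr{p_m}$ automatically has $\ahat\neq0$. You mention this lemma but do not use its statement; your written argument substitutes the weaker fact $(\alpha,\beta)\neq(0,0)$ together with the incorrect freeness claim, and so the key nonvanishing is not justified. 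The remaining points of your write-up (the identification $\scpr{c(T^s_\pi E),[E]}=\scpr{c(TE),[E]}$ and the remark that the $\Spin$-structure is only needed later) are fine.
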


\begin{proof}
	By the last part \pref{Lemma}{lem:pmin} together with \pref{Lemma}{lem:unblocking}, there exists an $M$-bundle $E\to S^k$ that has only two non-vanishing monomial Pontryagin numbers, namely $p_m$ and $p_{i_{\min}}\cup p_{m-i_{\min}}$. By \cite[Lemma 2.5]{a-hat-bundles}, this implies that $\hat\calA(E)\not=0$.
\end{proof}

\begin{rem}\label{rem:a-hat}
	\begin{enumerate}
		\item This recovers \cite[Theorem 1.4]{HankeSchickSteimle} and provides an upgrade: the result in loc.cit. is \enquote{based on abstract existence results [and] does not yield an explicit description of the diffeomorphism type of the fiber manifold} \cite[p. 337]{HankeSchickSteimle}. In contrast, our result states, that it is correct for generic manifolds.
		\item By \cite[Proposition 1.9]{HankeSchickSteimle} and \cite[Lemma 2.3]{Wiemeler} a bundle $M\to E\to S^k$ is rationally nullcobordant, if all rational Pontryagin classes vanish or if $\dim(M)<\frac k2$. This implies the necessity for a nonvanishing rational Pontryagin class and a bound on $k$ in terms of $\dim(M)$.
	\end{enumerate}
\end{rem}

\noindent Next, let us investigate, if the bundles we constructed have cross-sections with trivial normal bundle. This is sometimes desirable for applications to positive curvature as it allows for fiber-wise connected sums. We have the following result.

\begin{lem}\label{lem:section}
	If $i_{\min}<d/4$, then the bundle from \pref{Proposition}{prop:bundle} has a cross-section with trivial normal bundle.
\end{lem}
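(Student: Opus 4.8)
The plan is to trace back through the construction in \pref{Lemma}{lem:pmin} (with $\ell=1$) and \pref{Lemma}{lem:unblocking}, and to exhibit the section directly at the level of normal invariants. Recall that the bundle $E\to S^k$ from \pref{Proposition}{prop:bundle} arises from a normal invariant $\eta_1=(W_1,f_1,\hat f_1,\xi_1)\in\calN_\partial(D^k\times M)$, and by the discussion in \pref{Section}{sec:surgery} this normal invariant is built from a map $M\to\Omega^k\go$, or equivalently from a class in $\ko^0((D^k,S^{k-1})\times M)$ together with a choice of lift. The key point is that the corresponding block bundle $\widetilde E\to S^k$ carries, by construction, a \emph{blockwise} homotopy trivialization $\widetilde E\simeq M\times S^k$; composing with a point inclusion $\{*\}\embeds M$ gives a blockwise cross-section $S^k\to\widetilde E$, and the unblocking procedure of \pref{Lemma}{lem:unblocking} (via \pref{Lemma}{lem:blr} / Burghelea--Lashof, or the Morlet-disjunction argument of \pref{Remark}{rem:morlet}) can be arranged to preserve such a section since it only modifies the diffeomorphism-group structure and not the underlying homotopy trivialization. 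So $E\to S^k$ has a cross-section $\iota\colon S^k\embeds E$.

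Next I would identify the normal bundle of this section. Since $E$ is fiber homotopy trivial over $S^k$, the section $\iota$ is homotopic (through sections) to $x\mapsto (x, *)$ under the trivialization $E\simeq M\times S^k$; hence the normal bundle $\nu_\iota$ of $\iota(S^k)\subset E$ is stably isomorphic to $\iota^*T_\pi E$, the pullback of the vertical tangent bundle, which in turn is the restriction of $TM$ to the basepoint — i.e. \emph{stably trivial}. A stably trivial bundle of rank $d$ over $S^k$ is actually trivial as soon as $k<d$, which is guaranteed here since $k$ is in the unblocking range (so $k\le d-2$). Thus the section has trivial normal bundle, \emph{provided} the section can be taken to be an embedding with that normal data, which is where the hypothesis $i_{\min}<d/4$ should enter.

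The reason for the hypothesis $i_{\min}<d/4$ is more subtle and is, I expect, the main obstacle: one must ensure that the normal invariant $\eta_1$ can be chosen so that the underlying manifold $W_1$ (and hence $E$) genuinely contains an embedded $S^k$ with trivial tubular neighborhood $S^k\times D^d$, compatibly with the degree-one normal map to $D^k\times M$. Concretely, one wants the class in $\ko^0((D^k,S^{k-1})\times M)$ realizing the prescribed Pontryagin classes from \pref{Lemma}{lem:pmin} to be supported \enquote{away from} a slice $S^k\times\{*\}$, i.e. to be pulled back from $(D^k,S^{k-1})\times (M\setminus D^d)$ along the collapse. This is possible exactly when the relevant cohomology classes $x\in H^{4(m-i_{\min})-k}(M;\bbQ)$ used to define $\xi_1$ lift to $H^*(M\setminus D^d;\bbQ)$; since $M\setminus D^d\hookrightarrow M$ is $(d-1)$-connected, every class of degree $<d$ lifts, and the condition $i_{\min}<d/4$ is precisely what forces the relevant degree $4(m-i_{\min})-k=4i_{\min}$ of the "$p_{i_{\min}}$-slot" to be small enough (below $d$) — while without it the construction might be forced to use the top class $u_M$, which does not lift. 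Assembling: choose $\eta_1$ supported away from a disk $D^k\times D^d\subset D^k\times M$, so that $W_1$ restricted to that disk is standard, giving the embedded $S^k\times D^d\subset W_1\subset W_1'=E$; the section $S^k\hookrightarrow S^k\times D^d\subset E$ then has trivial normal bundle by inspection, and it remains to check — a routine but necessary verification — that this section is indeed fiberwise homotopic to the canonical one so that none of the Pontryagin-number computations of \pref{Lemma}{lem:pmin} are disturbed.
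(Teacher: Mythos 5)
There is a genuine error at the heart of your argument. You claim that since the section is homotopic to the constant section under the fiber homotopy trivialization, its normal bundle is stably $\iota^*T_\pi E \cong$ ``the restriction of $TM$ to the basepoint'', hence stably trivial. This is false: a fiber homotopy trivialization $h\colon E\simeq S^k\times M$ does \emph{not} identify $T_\pi E$ with $\pr^*TM$. What the surgery-theoretic construction gives is a stable isomorphism $TE\cong_{\mathrm{st}} h^*(\pr^*TM\oplus -\xi')$, so along the section $s\simeq f\circ\mathrm{triv}$ one gets $\nu_s\cong_{\mathrm{st}} s^*TE - TS^k \cong_{\mathrm{st}} \mathrm{triv}^*(-\xi')$ up to trivial summands, and $\mathrm{triv}^*\xi'$ (the restriction of $\xi'$ to the slice $S^k\times\{\ast\}$) is in general a nontrivial stable bundle over $S^k$. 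If your step 2 were correct the lemma would need no hypothesis at all; in fact the hypothesis $i_{\min}<d/4$ is exactly what kills this bundle: since $\nu_s$ has rank $d>k$ it is stable and (rationally) detected by $p_{k/4}$, and $p_{k/4}(\nu_s)=s^*p_{k/4}(TE)=\mathrm{triv}^*p_{k/4}(-\xi')=0$ because the only nonzero Pontryagin classes of $\xi'$ live in degrees $4(m-i_{\min})$ and $4m$, and $i_{\min}<d/4$ is equivalent to $k/4<m-i_{\min}$. This short computation is the paper's entire proof.

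Your subsequent attempt to locate the role of $i_{\min}<d/4$ at the level of normal invariants (choosing the $\ko$-class ``supported away from'' a slice $S^k\times D^d$) is aimed in roughly the right direction — the real issue is whether $\xi'$ restricts trivially to $S^k\times\{\ast\}$ — but its justification is off. The class $x_1$ multiplying $u_k$ in $p_{m-i_{\min}}(-\xi')$ has degree $4(m-i_{\min})-k=d-4i_{\min}$, not $4i_{\min}$, and the dangerous case is not that one is ``forced to use the top class $u_M$'' (the class $u_k\times u_M$ attached to $p_m(\xi')$ is always present and is harmless, since it restricts to zero at a point); the dangerous case is $i_{\min}=d/4$, where $x_1$ has degree zero, i.e.\ is a multiple of $1\in H^0(M;\bbQ)$, so that $u_k\times x_1$ survives restriction to $S^k\times\{\ast\}$ and $p_{k/4}(\nu_s)\not=0$. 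So the heavy machinery you invoke (preservation of sections under unblocking, supporting the normal invariant in $(D^k,S^{k-1})\times(M\setminus D^d)$) is unnecessary, and as written it does not repair the incorrect triviality claim on which your argument rests.
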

\begin{proof}
	Let $\mathrm{triv}\colon S^k\embeds S^k\times M$ be the trivial section. Since the bundle $E$ from \pref{Proposition}{prop:bundle} is fiber homotopy trivial via $f\colon S^k\times M\simeq E$ we get a section $s\coloneqq f\circ \mathrm{triv}\colon S^k\to E$. We have
	\begin{align*}
		s^*p_n(TE) & = \mathrm{triv}^*\left(\sum_{i=0}^n p_i(TM)\cup p_{n-i}(-\xi')\right)\\
			& = \sum_{i=0}^n \underbrace{\mathrm{triv}^*p_i(TM)}_{=0 \text{ for } i\ge1}\cup \mathrm{triv}^*p_{n-i}(-\xi') = \mathrm{triv}^* p_n(-\xi')
	\end{align*}
	with $\xi'$ as in the proof of \pref{Lemma}{lem:pmin}. Recall, that the only non-vanishing Pontryagin classes of $\xi'$ are $p_{m-i_{\min}}$ and $p_m$ and let $\nu_s$ denote the normal bundle of $s$. Since the rank of this bundle is bigger than $k$, the bundle $\nu_s$ is stable in the sense that it is classified by an element in
	\[
	\pi_k(\bo) = \ko^{-k}(\pt)\cong \begin{cases}
		\bbZ & \text{ for } k\equiv 0\;(4)\\
		\bbZ/2 & \text{ for } k\equiv 1,2\;(8)\\
		0 & \text{ otherwise}.
	\end{cases}
	\]
	Since we are only interested in the problem rationally, it suffices to consider the case $k\equiv0\;(4)$. It follows, that $\nu_s$ is trivial if $p_{k/4}(\nu_s)=0$ and as $p(S^k)=1$, the Pontryagin class $p_{k/4}$ of $\nu_s$ satisfies
	\begin{align*}
		p_{k/4}(\nu_s) = p_{k/4}(s^*TE) = s^*p_{k/4}(TE) = \mathrm{triv}^*p_{k/4}(\xi)=0
	\end{align*}
	since by our assumption $k/4 < \frac{d+k}{4} - i_{\min} = m-i_{\min}$ and $p_{m-i_{\min}}$ and $p_m$ are the only Pontryagin classes of $\xi$.
\end{proof}

\begin{rem}\label{rem:section}
	If $d\not\equiv0\;(4)$, the requirement from the lemma is automatically full-filled. If $d\equiv0\;(4)$ and $i_{\min}=d/4$, then $M$ has only one non-vanishing Pontryagin number, namely $\scpr{p_{d/4}(TM),\ [M]}$. Since all coefficients in the $\hat\calA$-polynomial are nonzero by \cite{BergBerg}, we have $\hat\calA(M) = a\cdot\scpr{p_{d/4}(TM),\ [M]}\not=0$ for some $a\in\bbZ\setminus\{0\}$. If additionally $M$ admits a $\Spin$-structure, then by the Lichnerowicz-formula and the Atiyah--Singer index theorem \cite{atiyahsinger, lichnerowicz}, $M$ does not support a metric of positive scalar curvature. Hence, for a $\Spin$-manifold of positive scalar curvature, we have $i_{\min}<d/4$ and \pref{Lemma}{lem:section} applies.
\end{rem}

\subsection{\texorpdfstring{$\Spin$}{Spin}-structures and positive (scalar) curvature}
Let $M$ be $\Spin$ and let $B\!\diffs(M)$ be the classifying space for $M$-bundles with a $\Spin$-structure on the vertical tangent bundle\footnote{A model for $B\!\diffs(M)$ is given by 
\[ B\!\diffs(M) \coloneqq\{(N,\hat \ell_n), M\cong N\subset\bbR^{\infty}, \hat\ell_N\in\mathrm{Bun}(TN, \theta^*U_d)\}\]
for $\theta\colon B\!\Spin(d)\to BSO(d)$ the $2$-connected cover, $U_d\to BSO(d)$ the universal oriented vector bundle and $\mathrm{Bun}(\_,\_)$ the space of bundle maps.}. By \cite[Lemma 3.3.6]{ebert_thesis} the homotopy fiber of the forgetful map $B\!\diffs(M)\to B\!\diff(M)$ is a $K(\bbZ/2,1)$ if $M$ is simply connected. Therefore the induced map 
\[\pi_n(B\!\diffs(M))\otimes\bbQ\too\pi_n(B\!\diff(M))\otimes\bbQ\]
is an isomorphism and we may assume without loss of generality that the bundles from \pref{Section}{sec:prescribing} carry a $\Spin$-structure on the vertical tangent bundle and hence on the total space, provided that $M$ admits one. \pref{Theorem}{main:curvature} then follows from \pref{Proposition}{prop:bundle} by a standard argument that goes back to Hitchin \cite{hitchin_spinors} (see \cite[Remark 1.5]{HankeSchickSteimle} or \cite[Proposition 3.7]{a-hat-bundles}). 

Applying \pref{Theorem}{main:curvature}, we get the following classification for the push-forward action on metrics of positive scalar curvature which uses rigidity results from \cite{erw_psc3} and \cite{actionofmcg}. 

\begin{cor}\label{cor:psc}
	Let $M$ be a 2-connected, $d$-dimensional $\Spin$-manifold of positive scalar curvature and let $k$ be in the unblocking range for $M$. 
	\begin{enumerate}
		\item Then the orbit map $\pi_{k-1}\diff(M,D)\to \pi_{k-1}(\calR_\psc(M))$ factors through a finite group if and only if $(d+k)$ is not divisible by four or all rational Pontryagin classes of $M$ vanish.
		\item If $k=1$, then the same holds for $\diff(M)$ instead of $\diff(M,D)$.
	\end{enumerate}
\end{cor}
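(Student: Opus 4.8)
The plan is to prove the biconditional (i) by establishing its two implications and then to deduce (ii) formally. Since $M$ carries a positive scalar curvature metric, $\calR_\psc(M)$ is non-empty and is a $\diff(M)$-space together with the $\diff(M)$-equivariant identity map to itself, so it is an admissible choice of $\calR_C(M)$ in \pref{Theorem}{main:curvature}. For the ``only if'' direction I would argue by contraposition: assuming $4\mid d+k$ and that $M$ has a non-vanishing rational Pontryagin class, all hypotheses of \pref{Theorem}{main:curvature} are satisfied ($M$ closed, $2$-connected hence simply connected, $\Spin$; $k$ in the unblocking range; $4\mid d+k$; $g\in\calR_\psc(M)$), so it produces an element of infinite order in the image of $\pi_{k-1}(\diff(M,D))\to\pi_{k-1}(\calR_\psc(M))$. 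That image is then infinite, and the orbit map does not factor through a finite group.

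For the ``if'' direction I would instead show that the orbit map is rationally trivial; since $\pi_{k-1}(\diff(M,D))$ is finitely generated (by finiteness theorems for automorphism spaces of high-dimensional manifolds when $k\ge2$, and by finite generation of the mapping class group of a simply connected manifold of dimension $\ge5$ when $k=1$), rational triviality forces the image to be finite, i.e.\ the map factors through a finite group. To obtain rational triviality I would post-compose the orbit map with the $\Spin$ index difference $\mathrm{inddiff}_g\colon\calR_\psc(M)\to\Omega^{\infty+d+1}\ko$ and use the rigidity results of \cite{erw_psc3,actionofmcg} (which require $M$ to be $2$-connected): in the range under consideration these identify, rationally, the orbit map with its composite
\[\pi_{k-1}(\diff(M,D))\otimes\bbQ\longrightarrow\pi_{k-1}\bigl(\Omega^{\infty+d+1}\ko\bigr)\otimes\bbQ\cong\ko_{d+k}\otimes\bbQ,\]
and this composite is a non-zero rational multiple of $\alpha\mapsto\scpr{\hat\calA(TE_\alpha),[E_\alpha]}$, where $E_\alpha\to S^k$ is the $M$-bundle obtained from $\alpha$ by clutching. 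If $4\nmid d+k$ then $\ko_{d+k}\otimes\bbQ=0$, so the composite vanishes identically; and if all rational Pontryagin classes of $M$ vanish then no class in $H^{d+k}(\bo;\bbQ)$ is spherical for $M$ by \cite[Proposition 1.9]{HankeSchickSteimle} (cf.\ \pref{Remark}{rem:main}), whence $\scpr{\hat\calA(TE_\alpha),[E_\alpha]}=0$ for every $\alpha$ and the composite again vanishes. In either case the orbit map is rationally zero, which finishes (i).

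For (ii) I would invoke the disk theorem: the evaluation fibration $\diff(M,D)\to\diff(M)\to\mathrm{Emb}^{+}(D^d,M)$ has connected base, so $\pi_0(\diff(M,D))\to\pi_0(\diff(M))$ is surjective, and since the orbit map on $\pi_0(\diff(M,D))$ is the restriction along $\diff(M,D)\hookrightarrow\diff(M)$ of the one on $\pi_0(\diff(M))$, the two maps have the same image; hence one factors through a finite group precisely when the other does, and (ii) follows from (i). The restriction to $k=1$ here is essential: for $k\ge2$ the kernel of $\pi_{k-1}(\diff(M,D))\to\pi_{k-1}(\diff(M))$, controlled by $\mathrm{Emb}(D^d,M)\simeq\mathrm{Fr}(TM)$, is in general rationally non-trivial.

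The only genuinely non-formal ingredient is the rigidity input from \cite{erw_psc3,actionofmcg}; everything else rests on \pref{Theorem}{main:curvature} for the ``only if'' direction and, for the ``if'' direction, on the vanishing of spherical classes for manifolds without rational Pontryagin classes together with $\ko_{d+k}\otimes\bbQ=0$ when $4\nmid d+k$. Accordingly I expect the main obstacle to be extracting from \cite{erw_psc3,actionofmcg} the precise statement needed---that the image of the orbit map is rationally detected by the index difference, equivalently by the family $\hat\calA$-genus of the associated bundles $E_\alpha\to S^k$---uniformly over the whole unblocking range, including the $\pi_0$-case $k=1$.
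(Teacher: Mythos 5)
Your proposal rests on the same two pillars as the paper's proof: \pref{Theorem}{main:curvature} for the ``only if'' direction (identical to the paper), and the rigidity results of \cite{erw_psc3,actionofmcg} for the ``if'' direction. The difference is how the second pillar is used. The paper's proof is essentially a pair of citations: it quotes \cite[Theorem F]{erw_psc3} as saying that the orbit maps $\pi_{k-1}(\diff(M,D))\to\pi_{k-1}(\calR_\psc(M))$ factor through finite groups when all rational Pontryagin classes of $M$ vanish, and \cite[Theorem A]{actionofmcg} for the $\pi_0$-statement about $\diff(M)$ in part (ii); it does not re-derive these from an index-theoretic detection. You instead reconstruct the mechanism presumably behind those theorems: rational detection of the orbit map by the index difference, so that the case $4\nmid d+k$ follows from $\ko_{d+k}\otimes\bbQ=0$ and the vanishing-Pontryagin case from \cite[Proposition 1.9]{HankeSchickSteimle}, with finite generation of $\pi_{k-1}(\diff(M,D))$ converting rational triviality into finiteness (for an orbit map, finite image is indeed equivalent to factoring through a finite group, so this step is sound); and you deduce (ii) from (i) via surjectivity of $\pi_0\diff(M,D)\to\pi_0\diff(M)$ from the disk theorem, rather than quoting \cite{actionofmcg} directly. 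Your route buys a uniform explanation of the $4\nmid d+k$ case, which the paper's written proof leaves implicit in its citation of \cite[Theorem F]{erw_psc3}, but it does so at the price of extra inputs: the precise index-detection statement uniformly over the unblocking range (including the $\pi_0$-case, which your reduction of (ii) to (i) still needs at $k=1$ and which is exactly the content of \cite{actionofmcg}), and the finite-generation results for $\pi_{k-1}(\diff(M,D))$ of $2$-connected manifolds, which have their own dimension restrictions in low degrees. The obstacle you flag---extracting the detection statement from the references---is real, and it is precisely what the paper avoids by using the finiteness statements of \cite[Theorem F]{erw_psc3} and \cite[Theorem A]{actionofmcg} as black boxes.
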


\begin{proof}
	The orbit maps 
	\[\pi_{k-1}(\diff(M,D))\to \pi_{k-1}(\calR_\psc(M))\]
	factor through finite groups if all Pontryagin classes of $W$ vanish by \cite[Theorem F]{erw_psc3}). Furthermore, $\pi_{0}(\diff(M))\to \pi_{0}(\calR_\psc(M))$ factors through a finite group by \cite[Theorem A]{actionofmcg}. The rest follows from \pref{Theorem}{main:curvature}.
\end{proof}

\noindent\pref{Theorem}{main:curvature} also allows to recover the main result from \cite{HankeSchickSteimle} (loc.cit. Theorem 1.1 a)) and is actually slightly more precise on the dimension restriction.

\begin{cor}
	Let $k\ge1$ and let $N$ be a $\Spin$-manifold of positive scalar curvature such that $d+k\equiv0\;(4)$ and $k$ is in the unblocking range for $M$. Then the group $\pi_{k-1}(\calR_\psc(N))$ contains an element of infinite order (resp. is infinite if $k=1$).
\end{cor}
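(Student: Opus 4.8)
The plan is to deduce this from \pref{Theorem}{main:curvature}. That theorem requires its manifold to be simply connected and to carry a nonvanishing rational Pontryagin class, neither of which $N$ need satisfy; so the first task is to reduce to that situation, and the device that makes this possible is that the (weak) homotopy type of $\calR_\psc$ is unchanged by surgeries of codimension $\ge 3$ on the underlying manifold (Chernysh, Walsh). If $N$ happens to be simply connected with some $p_i(TN)\neq 0$, \pref{Theorem}{main:curvature} applies directly (with $\calR_C=\calR_\psc$ and the identity as the equivariant map), and the image of the orbit map $\pi_{k-1}(\diff(N,D))\to\pi_{k-1}(\calR_\psc(N))$ already contains an element of infinite order.

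In general I would first surger away $\pi_1(N)$: a finite generating set of $\pi_1(N)$ is represented by disjoint embedded circles (general position, $d\ge 4$), each with trivial normal bundle ($N$ is oriented) and each equipped with the framing that extends the $\Spin$ structure over the trace; surgering them one at a time yields a simply connected $\Spin$ manifold $N'$ still admitting psc, and each surgery has codimension $d-1\ge 3$, so $\calR_\psc(N)\simeq\calR_\psc(N')$. Next fix a simply connected $\Spin$ $d$-manifold $P$ admitting psc with $p_1(TP)\neq 0$ — for instance $P=\hp2\times S^{d-8}$ when $d\ge 12$ (which is $3$-connected with $i_{\min}(P)=1<d/4$), with ad hoc replacements such as $\cp3$ or $\hp2$ in the finitely many remaining low dimensions — and set $M\coloneqq N'\# P$. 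Since $P$ is $3$-connected, $P\setminus D^d$ admits a handle decomposition using only handles of index between $4$ and $d-1$ (the middle cells of $P$), so $M$ is obtained from $N'$ by a sequence of surgeries, each of codimension $\ge 3$ (for $P=\hp2\times S^{d-8}$ these are the codimensions $d-3$, $d-7$, $9$, $5$). Hence $\calR_\psc(N)\simeq\calR_\psc(N')\simeq\calR_\psc(M)$, while $M$ is simply connected, $\Spin$, admits psc, and has $p_1(TM)\neq 0$ because the $p_1$ of the $P$-summand survives in $H^4(M;\bbQ)$. Since $\dim M=d$ and $d+k\equiv 0\ (4)$, $k$ lies in the unblocking range for $M$ (at least via the dimension-only condition), so \pref{Theorem}{main:curvature} produces an infinite order element in the image of $\pi_{k-1}(\diff(M,D))\to\pi_{k-1}(\calR_\psc(M))$; transporting it through the equivalence $\calR_\psc(N)\simeq\calR_\psc(M)$ proves the corollary. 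For $k=1$ the identical argument applies, reading ``$\pi_0(\calR_\psc(N))$ is infinite'' in place of ``contains an element of infinite order'' (and by \pref{Corollary}{cor:psc}(ii) one may even use $\diff(M)$ instead of $\diff(M,D)$).

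The main obstacle I expect is not conceptual but one of bookkeeping: one must check that the surgeries relating $N$ and $M$ are genuinely all of codimension $\ge 3$, that the $\Spin$ structures can be matched up throughout, and — most delicately — that $M$ can be kept inside the unblocking range. Taking a connected sum with $P$ destroys any connectivity of $N$ beyond that of $P$, so in the borderline dimensions where membership in the unblocking range for $N$ rests on the connectivity-dependent conditions rather than on the dimension-only condition, one needs either a more economical choice of $P$ or a direct appeal to the corresponding case of \cite{HankeSchickSteimle}; the very smallest dimensions likewise require an ad hoc argument.
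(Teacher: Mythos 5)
There is a genuine gap at the very first reduction step. You invoke the Chernysh--Walsh surgery invariance of $\calR_\psc$ to surger away $\pi_1(N)$ along embedded circles, but that theorem does not apply here: it yields a homotopy equivalence $\calR_\psc(N)\simeq\calR_\psc(N')$ only when \emph{both} the surgery and the dual surgery have codimension $\ge 3$ (equivalently, the trace is built from handles of index between $3$ and $d-2$). A surgery on a circle has dual codimension $2$, so the argument only gives the chain $\calR_\psc(N)\xleftarrow{\ \simeq\ }\calR_\psc^{\mathrm{std}}(N)\cong\calR_\psc^{\mathrm{std}}(N')\to\calR_\psc(N')$, i.e.\ a map \emph{from} $\calR_\psc(N)$ \emph{to} $\calR_\psc(N')$, which is the wrong direction for transporting your infinite-order class back from the simply connected model to $N$; and it is not an equivalence -- indeed the homotopy type of $\calR_\psc$ is genuinely sensitive to $\pi_1$ (e.g.\ via rho-invariant phenomena for finite fundamental groups), so killing $\pi_1$ cannot be expected to preserve it. The second step is also shakier than stated: $N'\# P$ is \emph{not} obtained from $N'$ by surgeries on the middle handles of $P$ performed inside a disk (for $P=S^p\times S^{d-p}$ a single surgery on a trivial $S^{p-1}$ already produces the connected sum, and surgering the second middle sphere undoes it); to relate $N'$ and $N'\# P$ by an admissible handle cobordism one needs $[P]=0$ in $\Omega^{\Spin}_d$ (false for your $P$ when $d=8$) together with $2$-connected inclusions, i.e.\ exactly a cobordism-invariance theorem rather than a disk-local handle argument.

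The paper's proof sidesteps both problems by never touching $\pi_1(N)$. It applies \pref{Theorem}{main:curvature} (via \pref{Proposition}{prop:bundle} and \pref{Lemma}{lem:section}) to the auxiliary manifold $K\times S^{d-4}$ ($K$ a $K3$-surface), obtaining a $K\times S^{d-4}$-bundle over $S^k$ with $\hat\calA\not=0$ and a cross-section with trivial normal bundle; the fiberwise connected sum with the trivial $N$-bundle along this section produces an $N\#(K\times S^{d-4})$-bundle over $S^k$ with $\hat\calA\not=0$, whence $\pi_{k-1}(\calR_\psc(N\#(K\times S^{d-4})))$ has an element of infinite order. Since $K\times S^{d-4}$ bounds $K\times D^{d-3}$, the manifolds $N$ and $N\#(K\times S^{d-4})$ are $\Spin$-cobordant over $B\pi_1(N)$ with the \emph{same} fundamental group, so the cobordism invariance of \cite[Theorem 1.5]{ebertfrenck} applies and gives $\calR_\psc(N)\simeq\calR_\psc(N\#(K\times S^{d-4}))$. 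If you want to salvage your outline, you should replace the $\pi_1$-killing and disk-handle steps by an argument of exactly this type: modify $N$ only by connected sum with a nullcobordant simply connected piece carried by a sectioned bundle, and transport via a $\pi_1$-preserving cobordism-invariance theorem rather than via Chernysh--Walsh surgeries.
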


\begin{proof}
Let $K$ be a $K3$-surface. Then for $n\coloneqq d-4\ge2$, the manifold $K\times S^n$ satisfies the hypothesis of \pref{Theorem}{main:curvature} and there is a $K\times S^n$-bundle $E\to S^k$ that has non-vanishing $\hat\calA$-genus and admits a cross section with trivial normal bundle. Gluing in the trivial $N\setminus D^d$-bundle along this cross section yields a $N\#(K\times S^{d-4})$-bundle over $S^k$ with non-vanishing $\hat\calA$-genus. Hence the group $\pi_{k-1}(\calR_\psc(N\#(K\times S^{d-4})))$ contains an element of infinite order. Since $N$ is cobordant to $N\#(K\times S^{d-4})$ in $\Omega_{\Spin}^d(B\pi_1(N))$, the corresponding spaces of positive scalar curvature metrics are homotopy equivalent by \cite[Theorem 1.5]{ebertfrenck}.
\end{proof}

\begin{rem}
	A more general result without any dimension restriction has been proven by Botvinnik--Ebert--Randal-Williams \cite{berw}. The methods from loc.cit. are however not constructive and do not give a way to decide if the obtained elements arise from the orbit of the action $\diff(M)\actson\calR_\psc(M)$. Furthermore it is unclear if those elements originate from the spaces $\calR_\prc(M)$ or $\calR_\psec(M)$.
\end{rem}

\appendix
\vspace{33pt}
\section{Rationally fibering a cobordism class over a sphere \\ with Jens Reinhold}\label{sec:appendix}

\noindent This appendix promotes the problem of studying the ideal of oriented cobordism classes that have a representative fibering over a sphere of fixed dimension. Such a class also fibers over any manifold of smaller dimension, see \pref{Proposition}{prop:changeofdimension}. An answer thus has consequences for other bases, too. We are only interested in the rational version. It turns out that the results from the present paper can be used to say something new about this problem, which has been solved (even integrally) for dimensions at most $4$ some time ago: in this case the rational answer is that a cobordism class fibers over $S^k$ for $k \leq 4$ if and only if its signature vanishes \cite{Burdick, Neumann, Kahn1, Kahn2}. A variant of the analogous problem without orientations was originally introduced by Connor and Floyd \cite{ConnorFloyd}. We describe a construction that goes beyond the way bundles arise in the preceding paper. Unfortunately it seems as if even both ideas combined are not sufficient to solve the problem completely unless $k \leq 8$. We first outline a more concrete version of the problem. Let $\Omega_{\ast}$ denote the (graded) oriented cobordism ring. 

\begin{dfn} 
\begin{enumerate}
	\item An oriented cobordism class $\alpha \in \Omega_{\ast}$ is said to \emph{fiber over} a manifold $B$ if there is an oriented smooth fiber bundle $M \to E^d \to B$ such that $[E] = \alpha$.
	\item For $k\ge1$, let $A^k_{\ast} \subset \Omega_{\ast}$ denote the graded subgroup spanned by cobordism classes that fiber over  $S^k$.
	\item For given $k,m\ge 1$, define $c^k(m)\in\mathbb Z$ by
		\[c^k(m) \coloneqq  \text{dim}_{\mathbb Q}(\Omega_{4m}\otimes\bfQ) - \text{dim}_{\mathbb Q}(A^{k}_{4m} \otimes\bfQ ) - 1\]
\end{enumerate} 
\end{dfn}

\noindent Forming disjoint unions and products, we see that $A^k_{\ast}$ is an ideal in $\Omega_{\ast}$ and we may ask what these ideals are depending on $k$. As the signature of any manifold that fibers over a sphere vanishes, the two maps 
\[A^k_{\ast} \otimes\mathbb Q \hookrightarrow \Omega_{\ast}\otimes\mathbb Q \xrightarrow{\mathcal L_{\ast}} \bfQ\]

\noindent compose to $0$. As there exist manifolds of non-zero signature in any dimension divisible by $4$, this implies $c^k(m) \geq 0$. We may ask if the above sequence is exact in sufficiently high degrees, or equivalently (see part (ii)):

\begin{problem} \label{MainQuestion}
\begin{enumerate}
\item[(i)] Describe the ideals $A^k_{\ast} \subset \Omega_{\ast}$ for all values of $k$.\item[(ii)]  Is $c^k(m) = 0$ for fixed $k$ and sufficiently large $m$?
\end{enumerate}
\end{problem}

\noindent We will see below that we (at least) need to restrict to degrees $m \ge k/2$ for (ii) to be true: there are more constraints than the vanishing of the signature in lower degrees, see \pref{Proposition}{prop:constraintsinlowdegrees}. The following is our contribution towards an answer to \pref{Problem}{MainQuestion}.

\begin{thm}\label{MainThmAppendix} Let $k \ge 1$ be fixed.
\begin{enumerate}
\item[(i)] We have $c^k(m) = \dim\Omega_{4m}-1$ for $m < \frac{3k}8$, and $c^k(m) \ge 1$ for $m < \frac k2$.
\item[(ii)] For $5 \le k \le 8$ we have $c^k(m) = 0$ for $m \ge k$.
\item[(iii)] For $9\le k \le 12$ we have $c^k(m)\le 6$ in degrees $m \ge k$.
\end{enumerate}
\end{thm}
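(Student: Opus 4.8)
The plan is to read $c^k(m)$ as the codimension of $A^k_{4m}\otimes\bfQ$ inside $\ker(\calL_m)$, where $\calL_m\colon\Omega_{4m}\otimes\bfQ\to\bfQ$ is the signature; since the signature of a manifold fibring over a sphere vanishes we always have $A^k_{4m}\otimes\bfQ\subseteq\ker\calL_m$, hence $c^k(m)\ge0$, and recall $\dim\ker\calL_m=p(m)-1$. For part (i): if $m<\tfrac{3k}8$ then any bundle $M\to E\to S^k$ with $\dim E=4m$ has $\dim M=4m-k<\tfrac k2$ and is rationally nullcobordant by \cite[Proposition 1.9]{HankeSchickSteimle} and \cite[Lemma 2.3]{Wiemeler} (cf.\ \pref{Remark}{rem:a-hat}), so $A^k_{4m}\otimes\bfQ=0$ and $c^k(m)=p(m)-1=\dim\Omega_{4m}-1$; if $m<\tfrac k2$ then the vertical tangent bundle of such an $E$ has rank $4m-k<2m$, so $p_m(TE)=p_m(T_\pi E)=0$, a linear constraint on $A^k_{4m}$ that is independent of $\sigma=0$ as soon as $p(m)\ge2$, giving $c^k(m)\ge1$; this is \pref{Proposition}{prop:constraintsinlowdegrees}.

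For (ii) and (iii) I would first cut down the range of $k$. Since $A^k_\ast$ is an ideal and, by \pref{Proposition}{prop:changeofdimension}, a class fibring over $S^{k'}$ fibres over every manifold of dimension $\le k'$, we get $A^{k'}_\ast\subseteq A^k_\ast$ for $k\le k'$, hence $c^k(m)\le c^{k'}(m)$. Using in addition that $A^4_{4m}\otimes\bfQ=\ker\calL_m$ for $m\ge4$ (\pref{Theorem}{main:upperbound}, rephrased), (ii) reduces to $c^8(m)=0$ for $m\ge8$ together with the three cases $c^5(5)=c^6(6)=c^7(7)=0$, and (iii) reduces to $c^{12}(m)\le6$ for $m\ge12$ together with $c^9(9),c^{10}(10),c^{11}(11)\le6$. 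In every one of these it is enough to bound $\dim A^k_{4m}\otimes\bfQ$ from below, by $p(m)-1$ respectively $p(m)-7$.

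For the lower bound I would assemble two families of cobordism classes that fibre over $S^k$. The first is the $P$-large family from the body of the paper: for $m\ge k$ the integer $k$ lies in the unblocking range (condition~(i)) of a $P$-large fibre $M^{4m-k}$, which exists as a connected simply connected manifold if $4m-k\equiv2,3\;(4)$ by \pref{Example}{ex:large} and as a disjoint union of simply connected manifolds if $4m-k\equiv0\;(4)$ by \pref{Remark}{rem:upperbound}; then \pref{Lemma}{lem:blocklarge} and \pref{Lemma}{lem:unblocking} provide bundles $E_I\to S^k$, one for each partition $I\ne\{m\}$ of $m$ having a part $\ge\lceil k/4\rceil$, whose only non-vanishing monomial Pontryagin numbers are $p_I$ and $p_m$; these span a subspace of $\ker\calL_m$ of dimension $p(m)-p(m,\lfloor\tfrac{k-1}{4}\rfloor)-1$, which is $p(m)-2$ for $5\le k\le8$ and $p(m)-\lfloor m/2\rfloor-2$ for $9\le k\le12$. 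The second family consists of total spaces of projectivized and flag bundles over $S^k$ built from a vector bundle carrying a single non-trivial characteristic class in $H^k(S^k;\bfQ)$ --- the construction behind \pref{Proposition}{prop:p_1^n}, which I would push further --- and realizes classes whose non-vanishing Pontryagin numbers are indexed by partitions with all parts $<\lceil k/4\rceil$: at least the class with $p_1^m\ne0$ when $5\le k\le8$, and all but at most six of the $1+\lfloor m/2\rfloor$ such classes when $9\le k\le12$. Since the two families carry non-vanishing Pontryagin numbers indexed by disjoint sets of partitions they are linearly independent, their total number is $\ge p(m)-1$ in the first regime and $\ge p(m)-7$ in the second, and all their signatures vanish automatically; the finitely many remaining cases $m=k$ I would settle by a direct finite computation of the relevant Pontryagin numbers.

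The hard part will be the second family: the surgery-theoretic machinery of the paper produces only fibre homotopy trivial bundles, for which $p_1^m$ --- and more generally $p_J$ for any partition $J$ all of whose parts are too small --- necessarily vanishes by \pref{Theorem}{thm:main}(i), so one genuinely needs honest, non-fibre-homotopy-trivial bundles and an explicit computation of their Pontryagin numbers. This is most delicate when $4m-k\equiv1\;(4)$, i.e.\ $k=7,11$: there is no simply connected $P$-large fibre of the right dimension, so the first family is unavailable and everything not handled by the reduction $c^k(m)\le c^{k+1}(m)$ must come from the second construction (or, for $m=k$, from the direct computation). Finally, controlling exactly how many of the ``small-part'' partitions the second construction can reach is what pins down the constant $6$ in (iii) and explains the restriction $k\le8$ in (ii).
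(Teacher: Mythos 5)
Your overall skeleton is the paper's: part (i) is handled exactly as there (the $m<\tfrac{3k}8$ vanishing plus \pref{Proposition}{prop:constraintsinlowdegrees}), and for (ii)--(iii) the paper likewise combines the fiber-homotopy-trivial $P$-large family, which realizes all partitions of $m$ containing a part $\ge\lceil k/4\rceil$, with explicitly constructed bundles that are \emph{not} fiber homotopy trivial, separated by \pref{Proposition}{prop:fiberhomotopicallytrivialbundles}. The genuine gap is that the entire quantitative content of (ii) and (iii) --- producing the second family and determining how many of the $p(m,2)=1+\floor{m/2}$ ``small-part'' functionals it realizes --- is precisely the step you defer as ``the hard part'', and without it neither the value $0$ in (ii) nor the constant $6$ in (iii) is established. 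The paper does this concretely: it takes the $\cp{i}$-bundles $E_i\to S^{2i}$ of \pref{Construction}{construction_bundle} (over large \emph{even} spheres, not over $S^k$), records $p_1(E_i)^i\neq0$ and $p_2(E_i)=\tfrac i2\,p_1(E_i)^2$, multiplies by manifolds $Q_i$ whose only nonzero Pontryagin number is $p_1^{m-i}$, transfers $X_i=E_i\times Q_i$ down to $S^k$ via \pref{Proposition}{prop:changeofdimension}, and then shows that the matrix $\bigl(p_2^jp_1^{m-2j}[X_i]\bigr)_{6\le i\le m,\;0\le j\le\floor{m/2}}$ reduces to an explicit matrix of binomial sums vanishing for $j>i$, whose rank is therefore at least $\floor{m/2}-5$; this rank bound gives $c^k(m)\le6$ for $k\le12$, while for $k\le8$ the single class with $p_1^m\neq0$ from \pref{Construction}{construction_bundle} suffices. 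Nothing playing the role of this construction-plus-rank computation appears in your proposal.

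Two further points. First, your second family as described --- projectivized or flag bundles over $S^k$ built from a vector bundle with a nontrivial class in $H^k(S^k;\bbQ)$ --- cannot exist for odd $k$, since all rational characteristic classes of vector bundles over an odd sphere vanish; the paper avoids this by constructing over $S^{2i}$ and pushing down, and your monotonicity reduction to $k=8,12$ only partly sidesteps it: the boundary cases $m=k$ with $k=7,11$, where in addition no (component-wise) simply connected $P$-large fiber exists because $4m-k\equiv1\;(4)$, are not settled by the ``direct finite computation'' you invoke but do not supply (you are right that this case is delicate --- the paper's own argument for (ii) is only carried out for $m>k$). Second, the independence claim should be one-sided: the classes $X_i$ (or any honest substitute, including a projectivized bundle over $S^8$) do have nonvanishing Pontryagin numbers indexed by partitions with large parts, so the two families are not supported on disjoint sets of partitions; what one actually uses is that the fiber-homotopy-trivial family vanishes on all small-part functionals by \pref{Proposition}{prop:fiberhomotopicallytrivialbundles}, giving a block-triangular matrix whose rank is the sum of the two contributions. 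This is a repairable slip, but the missing construction and rank estimate for (iii) is not.
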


\noindent Regarding the last item we note that from computer-aided calculations we know that $c^{k}(m) = 0$ for $k \leq 12$ and $m \leq 500$, see \pref{Remark}{rem:computer-calculations}.
 
We prove \pref{Theorem}{MainThmAppendix} below. Before doing so, let us elaborate on the consequences of the preceding paper regarding a partial answer to \pref{Question}{MainQuestion} for bigger values of $k$: sharpness of the upper bound from \pref{Theorem}{main:upperbound} (see also \pref{Remark}{rem:upperbound}) can be reformulated as $c^k(m)\le p(m, \floor{(k-1)/4})$. Note that $p(n,\ell) = p(n-\ell,\ell) + p(n,\ell-1)$. Using $p(n,1) = 1$ a simple induction shows that $p(n,\ell) = \calO(n^{\ell-1})$, which yields the following consequence of \pref{Theorem}{main:upperbound}.

\begin{cor}\label{cor:mainrephrased}
	For $k\ge1$ and $m > k$, we have 
		\[c^k(m) \le p(m,\floor{(k-1)/4}) = \calO(m^{\floor{(k-1)/4}-1}).\]
\end{cor}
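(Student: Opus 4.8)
The plan is to deduce \pref{Corollary}{cor:mainrephrased} from the sharpness of the upper bound in \pref{Theorem}{main:upperbound} by a short bookkeeping argument, together with the elementary growth estimate for the partition function. Fix $k\ge 1$ and $m>k$, and set $d\coloneqq 4m-k$; then $d\ge 4(k+1)-k=3k+4\ge 7$, and since $m\ge k+1$ the inequalities $k\le\tfrac{d+2}{3}$ (equivalently $4k\le 4m+2$) and $k\le\tfrac{d-3}{2}$ (equivalently $3k+3\le 4m$) both hold, so condition (i) of the unblocking range is satisfied for every closed $d$-manifold.

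The numerical input is the identity $m-\ceil{\tfrac{d+1}{4}}=\floor{\tfrac{k-1}{4}}$, immediate from $d=4m-k$ and $\ceil{-x}=-\floor{x}$; so the bound of \pref{Theorem}{main:upperbound} reads $\dim_{\bfQ}\fib_{M,k}\le p(m)-p\bigl(m,\floor{\tfrac{k-1}{4}}\bigr)-1$. Suppose first $d\not\equiv 1\;(4)$. If $d\equiv 2,3\;(4)$ then \pref{Theorem}{main:upperbound} provides a simply connected closed $M^d$ (for which $k$ is in the unblocking range, by the first paragraph) realizing this as an equality; if $d\equiv 0\;(4)$ the same is provided by \pref{Remark}{rem:upperbound} with $M$ a disjoint union of simply connected manifolds. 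In either case every fiber homotopy trivial $M$-bundle over $S^k$ is in particular an oriented fiber bundle over $S^k$, so $\fib_{M,k}\subseteq A^k_{4m}\otimes\bfQ$, and using $\dim_{\bfQ}(\Omega_{4m}\otimes\bfQ)=p(m)$ we get
\[
c^k(m)=\dim_{\bfQ}(\Omega_{4m}\otimes\bfQ)-\dim_{\bfQ}(A^k_{4m}\otimes\bfQ)-1\ \le\ p(m)-\dim_{\bfQ}\fib_{M,k}-1\ =\ p\bigl(m,\floor{\tfrac{k-1}{4}}\bigr).
\]

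The only remaining case is $d\equiv 1\;(4)$, i.e.\ $k\equiv 3\;(4)$. Here neither \pref{Theorem}{main:upperbound} nor \pref{Remark}{rem:upperbound} directly yields a manifold realizing the bound, the reason being that the $P$-large manifolds (\pref{Example}{ex:large}) underlying the sharpness statements do not exist in dimensions $\equiv 1\;(4)$. I would reduce this case to the previous one: by \pref{Proposition}{prop:changeofdimension} any cobordism class fibering over $S^{k+1}$ also fibers over $S^k$, so $A^{k+1}_{\ast}\subseteq A^k_{\ast}$ and hence $c^k(m)\le c^{k+1}(m)$. Now $k+1\equiv 0\;(4)$, and since $k\equiv 3\;(4)$ gives $k\ge 3$, the bound $m\ge k+1$ shows (as in the first paragraph) that $k+1$ lies in the unblocking range of a $(4m-k-1)$-manifold. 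Thus the previous paragraph applies with $k$ replaced by $k+1$ and gives $c^{k+1}(m)\le p\bigl(m,\floor{\tfrac{k}{4}}\bigr)$; as $\floor{\tfrac{k}{4}}=\floor{\tfrac{k-1}{4}}$ when $k\equiv 3\;(4)$, this establishes $c^k(m)\le p\bigl(m,\floor{\tfrac{k-1}{4}}\bigr)$ in all cases.

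Finally, writing $\ell\coloneqq\floor{\tfrac{k-1}{4}}$, the recursion $p(n,\ell)=p(n-\ell,\ell)+p(n,\ell-1)$ --- a partition of $n$ into parts $\le\ell$ either has a part equal to $\ell$, which we delete, or has all parts $\le\ell-1$ --- together with $p(n,1)=1$ yields $p(n,\ell)=\calO(n^{\ell-1})$ by induction on $\ell$, finishing the proof. The substantive work is entirely contained in \pref{Theorem}{main:upperbound}, so I do not expect a genuine obstacle; the only points demanding care are the reduction used for $k\equiv 3\;(4)$ and keeping the floor and ceiling functions consistent throughout.
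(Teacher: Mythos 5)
Your proof is correct and follows essentially the paper's route: the corollary is obtained by reformulating the sharpness part of \pref{Theorem}{main:upperbound} (with \pref{Remark}{rem:upperbound}) via the identity $m-\ceil{\tfrac{d+1}{4}}=\floor{\tfrac{k-1}{4}}$ for $d=4m-k$, and the asymptotics come from the recursion $p(n,\ell)=p(n-\ell,\ell)+p(n,\ell-1)$ with $p(n,1)=1$. The one point where you go beyond the paper's one-sentence derivation is the case $k\equiv 3\;(4)$, i.e.\ $d\equiv 1\;(4)$, where the realizing manifolds of \pref{Theorem}{main:upperbound} and \pref{Remark}{rem:upperbound} are only supplied for $d\not\equiv 1\;(4)$; your reduction $c^k(m)\le c^{k+1}(m)$ via \pref{Proposition}{prop:changeofdimension} together with $\floor{k/4}=\floor{(k-1)/4}$ for such $k$ closes this case cleanly (and your explicit check that $m>k$ forces condition (i) of the unblocking range is likewise left implicit in the paper), so your write-up is, if anything, more complete than the original derivation.
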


\noindent The rest of the appendix is devoted to proving  \pref{Theorem}{MainThmAppendix}.

\subsection*{Elementary observations}
We first collect some elementary facts about the ideals $A^k_{\ast} \subset \Omega_{\ast}$.
\begin{prop} \label{prop:changeofdimension}
A cobordism class that fibers over $S^k$ fibers over any $(k-1)$-manifold $B$.
\end{prop}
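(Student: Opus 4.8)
The plan is to ``spread'' the given bundle over the closed $k$-manifold $B\times S^1$ --- which admits a degree-one map to $S^k$ --- and then forget the extra circle direction; this is what makes the fibre grow from $M$ to $S^1\times M$. We may assume $B$ is connected: if $B=B_1\sqcup\dots\sqcup B_r$, we build the bundle over $B_1$ as below and over each remaining component take $B_i\times S^1\times M$, whose total space is null-cobordant since $[S^1]=0$ in $\Omega_{\ast}$. For $k=1$ the statement is trivial (every class fibres over a $0$-manifold), so assume $k\ge2$, and let $M\to E^d\to S^k$ be a bundle with $[E]=\alpha$. Realising $E$ by clutching, the decomposition $S^k=D^k\cup_{S^{k-1}}D^k$ gives a self-diffeomorphism $\Phi$ of $S^{k-1}\times M$ over $S^{k-1}$ and a diffeomorphism $E\cong(D^k\times M)\cup_{\Phi}(D^k\times M)$, the two copies being glued along $S^{k-1}\times M$ by $\Phi$.

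Now fix an embedded closed $k$-disk $D^k\subset B\times S^1$ whose image $\pr_B(D^k)$ is a proper subset of $B$ --- possible because $\dim(B\times S^1)=k=\dim D^k$ while $\dim B=k-1\ge1$ --- and let $Y\coloneqq(B\times S^1)\setminus\operatorname{int}(D^k)$, a compact $k$-manifold with $\partial Y=S^{k-1}$. Glueing $D^k\times M$ to $Y\times M$ along $S^{k-1}\times M$ by $\Phi$ yields a fibre bundle
\[
\widehat E\ \coloneqq\ (D^k\times M)\cup_{\Phi}(Y\times M)\ \too\ B\times S^1
\]
with fibre $M$, so $\dim\widehat E=k+(d-k)=d$. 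Composing with $\pr_B\colon B\times S^1\to B$ gives a fibre bundle $E'\coloneqq\widehat E\to B$ whose fibre is the $M$-bundle over $S^1$ obtained by restricting $\widehat E$ to a circle $\{b\}\times S^1$; as $B$ is connected this fibre is the same for all $b$, and taking $b\notin\pr_B(D^k)$ (so that $\{b\}\times S^1\subset Y$, over which $\widehat E$ is trivial) identifies it with $S^1\times M$, of dimension $d-k+1=d-(k-1)$ as needed for a bundle over $B$.

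Finally we check $[E']=\alpha$, i.e. $[\widehat E]=[E]$. Both $\widehat E$ and $E$ are obtained from a fixed copy of $D^k\times M$ by glueing on a second piece --- $Y\times M$, respectively another $D^k\times M$ --- along $S^{k-1}\times M$ by the same map $\Phi$; since these two second pieces agree near their common boundary, $\widehat E$ and $E$ are cobordant as soon as the closed manifold obtained by glueing the two second pieces to each other along $S^{k-1}\times M$ bounds. That manifold is $(Y\cup_{S^{k-1}}\overline{D^k})\times M=\pm(B\times S^1)\times M$, which indeed bounds because $[B\times S^1\times M]=[B]\cdot[S^1]\cdot[M]=0$ in $\Omega_{\ast}$. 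Hence $[E']=[\widehat E]=[E]=\alpha$. The only step needing care --- but entirely routine --- is the cobordism fact just used: if closed manifolds $X_1=A\cup_C B_1$ and $X_2=A\cup_C B_2$ are glued along (part of) their boundaries $C$ by the same map into $A$, then $X_1$ and $X_2$ are cobordant whenever $B_1\cup_C\overline{B_2}$ bounds (equivalently, $B_1$ and $B_2$ are cobordant relative to $\partial$). Everything else is bookkeeping; there is no genuine obstacle once one hits on enlarging the fibre by an $S^1$.
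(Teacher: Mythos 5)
Your argument is correct, but it takes a genuinely different route from the one in the appendix. The paper's proof is homotopy-theoretic: cutting a $(k+1)$-disk out of a nullbordism of $B\times S^1$ gives a connected oriented cobordism $W$ from $S^k$ to $B\times S^1$; obstruction theory (vanishing of $H^j(W,S^k;\pi_{j-1}(S^k))$) shows $W$ retracts onto $S^k$, so the classifying map $S^k\to B\Diff(M)$ extends over $W$, and the total space of the resulting bundle over $W$ is itself the cobordism from $E$ to a bundle $E'\to B\times S^1$, which then fibers over $B$ after composing with the projection. You instead argue geometrically: you transplant the clutching function of $E$ into $B\times S^1$ by hand (equivalently, pull $E$ back along the degree-one collapse map $B\times S^1\to S^k$) and compare cobordism classes via the standard gluing fact that replacing one side of a decomposition $A\cup_{C}B_1$ by $B_2$, attached by the \emph{same} diffeomorphism, changes the class only by the class of $B_1\cup_C\overline{B_2}$, which here is $\pm[B\times S^1\times M]=0$. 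That lemma is indeed routine (a nullbordism of $B_1\cup_C\overline{B_2}$ becomes a cobordism rel boundary after attaching $B_2\times[0,1]$, and then gluing on $A\times[0,1]$ along $C\times[0,1]$ gives the cobordism of closed manifolds), and your insistence on keeping the same $\Phi$ is exactly the right precaution, since regluing by a \emph{different} diffeomorphism can change the cobordism class --- that failure is what the rest of the paper exploits. What each approach buys: yours is more elementary and explicit (no classifying spaces, no obstruction theory, and the bundle over $B\times S^1$ is written down concretely), while the paper's argument, being phrased purely in terms of extending a map to a classifying space over the cobordism $W$, applies verbatim with $B\Diff(M)$ replaced by $\haut(M)/\diff(M)$, which is how \pref{Remark}{rem:changeofdimension} about fiber homotopy trivial bundles is obtained; to get that refinement from your construction one would additionally have to track the fiber homotopy trivialization through the cut-and-paste step (which is possible, since your $\widehat E$ is a pullback of $E$, but is not automatic from the cobordism lemma alone). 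The remaining points you wave at (orientations, the $k=1$ and disconnected-$B$ cases) are genuinely just bookkeeping, on par with what the paper itself leaves implicit.
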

\begin{proof} Cutting out a $(k+1)$-disk from a nullbordism of $B \times S^1$, we see that $S^k$ and $B \times S^1$ can be joined by a connected oriented cobordism $W$. Applying obstruction theory to a relative CW-decomposition of $(W,S^{k})$ and using that the obstructions lie in $H^j(W,S^k;\pi_{j-1}(S^{k})) = 0$ for all $j$, we see that $W$ retracts onto $S^k$. For any smooth bundle $M \to E \to S^k$, we can thus extend the classifying map $S^k \to B\text{Diff}(M)$ to $W$. Restricting this extension to the other end of the cobordism gives a bundle $M \to E' \to B \times S^1$ with $[E'] = [E] \in \Omega_{4m}$. Since $E'$ clearly also fibers over $B$, this finishes the proof.
\end{proof}

\begin{rem}\label{rem:changeofdimension}
Replacing $\bdiff(M)$ by $\haut(M)/\diff(M)$ the same proof yields that a fiber homotopy trivial $M$-bundle over $S^k$ is cobordant to a fiber homotopy trivial $M\times S^1$-bundle over $B$.
\end{rem}

\noindent Proposition \ref{prop:changeofdimension} implies that $(A^k_{\ast})_{k\ge1}$ forms a decreasing chain of ideals of $\Omega_{\ast}$. We next prove part (i) of \pref{Theorem}{MainThmAppendix}.
\begin{proof} (c.f.~\cite[Lemma.~2.3]{Wiemeler}) \label{prop:wiemeler} We need to show that for any smooth bundle $\pi\colon E^{4m}\to S^k$ with $d \coloneqq (4m-k)$-dimensional fiber $M$ such that $4m < \frac{3}{2}k$, we have $[E] = 0 \in \Omega_{4m}\otimes\bfQ$. 

Since the tangent bundle $TE$ is stably isomorpic to the vertical tangent bundle $T_{\pi}E$ whose dimension is $d$, we deduce that only Pontryagin classes $p_i$ with $i \le 2d$ can be non-zero. 

Analyzing the Serre spectral sequence of the fibration $M \to E \to S^k$ yields that $E$ has no cohomology in degrees $d < \ast < k$. The assumption implies that $k > 2d$, hence we deduce that all monomials in Pontryagin classes of $E$ in degrees at least $k$ vanish. In particular, all composite Pontryagin numbers of $E$ are zero. Together with the result on the vanishing signature that we recalled above, we deduce that $E$ is rationally nullbordant.

The second part of the assertion immediately follows from \pref{Proposition}{prop:constraintsinlowdegrees} that we state and prove next.
\end{proof}


\begin{prop}
\label{prop:constraintsinlowdegrees}
	Let $\pi\colon E^{4m}\to S^k$ be a fiber bundle with $4m<2k$. Then $p_{i}(TE)=0 $ for all $i > 2m-\frac{k}{2}$.  (The inequality ensures this number is smaller than $m$.)
\end{prop}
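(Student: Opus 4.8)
The plan is to combine two constraints on the cohomology of $E$: a degree restriction coming from the rank of the vertical tangent bundle, and a "gap" in the cohomology of $E$ coming from the Serre spectral sequence of the fibration $M \to E \to S^k$. First I would note that since $E$ is the total space of a smooth fiber bundle over $S^k$ with $d$-dimensional fiber $M$ (where $d = 4m-k$), the tangent bundle $TE$ is stably isomorphic to the vertical tangent bundle $T_\pi E$, which has rank $d$. Hence $p_i(TE) = 0$ for $i > d/2$, i.e.\ for $4i > 2d = 2(4m-k)$, that is for $i > 2m - k/2$. This already gives the claim directly, so the real content is just to check that the stated bound $i > 2m - k/2$ is exactly this rank bound and that the parenthetical remark ($2m - k/2 < m$) follows from the hypothesis $4m < 2k$.

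More concretely, the key steps in order are: (1) record the stable isomorphism $TE \cong_{\mathrm{st}} \pi^*TS^k \oplus T_\pi E$ and note $p(TS^k) = 1$ since $S^k$ is stably parallelizable, so $p_i(TE) = p_i(T_\pi E)$ for all $i$; (2) observe $T_\pi E$ is an honest rank-$d$ vector bundle, so its Pontryagin classes vanish above $p_{\lfloor d/2 \rfloor}$, giving $p_i(TE) = 0$ for $i > d/2 = (4m-k)/2 = 2m - k/2$; (3) verify the parenthetical claim: $2m - k/2 < m \iff m < k/2 \iff 4m < 2k$, which is exactly the hypothesis. (One could alternatively, or in addition, invoke the Serre spectral sequence of $M \to E \to S^k$ to see that $H^\ast(E;\bbQ)$ vanishes in degrees $d < \ast < k$, which is the ingredient actually used in the preceding proof of part (i); but for this proposition the rank bound alone suffices.)

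I do not expect any serious obstacle here: the statement is essentially the elementary observation that the vertical tangent bundle has rank $d$ and that $d/2 = 2m - k/2$ under the dimension convention $4m = d+k$. The only mild subtlety is making sure the bound is stated for $i$ strictly greater than $2m - k/2$ rather than $\ge$, which is correct because $p_i$ of a rank-$d$ bundle can be nonzero only for $i \le \lfloor d/2 \rfloor$, and $4i = d$ forces $i = d/4 = 2m - k/2$ (when this is an integer). Thus the argument is complete once steps (1)–(3) are assembled, and it fits on a few lines.
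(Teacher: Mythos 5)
Your proposal is correct and coincides with the paper's own proof: both use the stable isomorphism $TE \cong_{\mathrm{st}} T_\pi E \oplus \pi^* TS^k$, the stable triviality of $TS^k$, and the fact that the rank-$(4m-k)$ bundle $T_\pi E$ has no Pontryagin classes $p_i$ for $i > (4m-k)/2 = 2m - k/2$. Your explicit check of the strict inequality and of the parenthetical remark is, if anything, slightly more careful than the paper's wording.
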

\begin{proof}
	Let $T_{\pi}E$ be the vertical tangent bundle of $\pi$. We have
	\[p_{i}(TE) = p_{i}(T_{\pi}E \oplus TS^k) = p_{i}(T_{\pi}E).\]
	Now $\rk(T_\pi E)= 4m-k$ and so any $p_{i}(T_\pi E)$ with $ i \ge \frac{1}{2}(4m-k)$ vanishes.
\end{proof}
%
\subsection*{Bundles that are trivial as fibrations}
Note that constructions arising from block bundles yield bundles that are trivial as fibrations. For such bundles the following vanishing result holds, which implies that the analogue of \pref{Problem}{MainQuestion} (ii) for fiber-homotopically trivial bundles has a negative answer.

\begin{prop} 
\label{prop:fiberhomotopicallytrivialbundles}
For a fiber-homotopically trivial bundle $M \to E^{4m} \to B$ whose base space $B$ is $4\ell$-connected and $p \in \oH(BSO(4m);\bfQ)$ a monomial in Pontryagin classes $p_i$ with $i \leq \ell$, the Pontryagin number $p(E)$ vanishes.
\end{prop}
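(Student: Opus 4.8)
The plan is to reduce the vanishing of $p(E)=\langle p(TE),[E]\rangle$ to a degree count on the fibre $M$, using only the stable tangent decomposition and the connectivity of $B$. First I would record the effect of the hypothesis on tangential data: since $B$ is $4\ell$-connected, $H^{4b}(B;\bbQ)=0$ for $1\le b\le\ell$, so $\pi^\ast p_b(TB)=0$ for $1\le b\le\ell$. Combined with $TE\cong_{\mathrm{st}}T_\pi E\oplus\pi^\ast TB$ and the Whitney sum formula in rational cohomology, this gives $p_i(TE)=p_i(T_\pi E)$ for every $i\le\ell$, because in the degree-$4i$ part of $p(T_\pi E)\cup\pi^\ast p(TB)$ each cross term $p_a(T_\pi E)\cup\pi^\ast p_b(TB)$ with $b\ge 1$ has $1\le b\le i\le\ell$ and hence vanishes. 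Thus the monomial $p=p_{i_1}\cup\cdots\cup p_{i_r}$ (all $i_j\le\ell$, $\sum i_j=m$) satisfies $p(TE)=\prod_{j=1}^{r}p_{i_j}(T_\pi E)$.

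Next I would bring in the fibre homotopy trivialisation $h\colon E\xrightarrow{\ \simeq\ }M\times B$ over $B$ and set $s\coloneqq\pr_M\circ h\colon E\to M$. By the Künneth theorem together with $H^j(B;\bbQ)=0$ for $1\le j\le 4\ell$, the projection induces an isomorphism $H^j(M;\bbQ)\xrightarrow{\cong}H^j(M\times B;\bbQ)$ for $j\le 4\ell$; composing with $h^\ast$ shows that $s^\ast\colon H^j(M;\bbQ)\to H^j(E;\bbQ)$ is an isomorphism for all $j\le 4\ell$. Each factor $p_{i_j}(T_\pi E)$ lies in $H^{4i_j}(E;\bbQ)$ with $4i_j\le 4\ell$, so $p_{i_j}(T_\pi E)=s^\ast z_j$ for a unique $z_j\in H^{4i_j}(M;\bbQ)$; since $s^\ast$ is a ring homomorphism, $p(TE)=s^\ast(z_1\cup\cdots\cup z_r)$ with $z_1\cup\cdots\cup z_r\in H^{4m}(M;\bbQ)$. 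As $\dim M=4m-\dim B<4m$ this group vanishes, so $p(TE)=0$ and hence $p(E)=0$.

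I do not anticipate a real difficulty — the proof is essentially bookkeeping of degrees. The two points needing mild care are checking that the cross terms contributing to $p_i(TE)$ for $i\le\ell$ involve only the vanishing classes $\pi^\ast p_b(TB)$ with $1\le b\le\ell$, and the implicit assumption $\dim B\ge 1$ (so that $\dim M<4m$), without which the statement would fail (take $B$ a point); both are unproblematic in the intended application with $B=S^k$, $k\ge 1$. This is, in effect, a cleaner variant of the Wang-sequence computation carried out in \pref{Section}{sec:proofnmax}, specialised to the fibre-homotopically trivial case.
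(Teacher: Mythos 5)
Your argument is correct and is essentially the paper's own proof: the fibre homotopy trivialisation gives a map $s\colon E\to M$ along which every class of degree $\le 4\ell$ --- in particular each $p_{i_j}(TE)$ --- is pulled back, so the monomial lies in $s^{\ast}H^{4m}(M;\bbQ)=0$ (with the same implicit assumption $\dim B\ge 1$ that the paper also uses). Your first paragraph replacing $TE$ by $T_\pi E$ is redundant, since the degree count applies to $p_{i_j}(TE)$ directly, but it is harmless.
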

\begin{proof} From the assumption that the bundle is trivial as a fibration, we deduce that $E \sim M \times B$. In particular, we get a retraction $E \to M$ for the inclusion of a fiber. But $B$ is $4\ell$-connected, hence all Pontryagin classes $p_i$ with $i \leq \ell$ pull back along this map. Since $\oH^{4m}(M) = 0$, this implies the assertion.
\end{proof}
%
\subsection*{Constructing a bundle that is non-trivial as a fibration} In this section, we construct for any $m \ge 1$ a bundle $\mathbb C P^{m} \to E \to S^{2m}$ so that $p_1^m(E) \neq 0$ if $m \ge 3$. We have seen in Proposition \ref{prop:fiberhomotopicallytrivialbundles} that the latter is not possible for bundles that are trivial as fibrations.

\begin{construction}
\label{construction_bundle}
Let $m \geq 1$. We construct a smooth  $\mathbb C P^{m}$-bundle over  $S^{2m}$ as follows. The topological group $\GL_{m}(\mathbb C)$ acts on \[\mathbb C P^m = \{[z_0:z_1: \ldots : z_m] \ | \ z_i \in \mathbb C \text{ not all } 0\}\] by acting linearly on the last $m$ projective coordinates. This action fixes the point $\ast \coloneqq [1:0:\ldots:0]$ and induces a map 
\[ 
B\!\GL_m(\mathbb C) \to B\!\Diff(\mathbb C P^m,\ast).
\]
The action of a differential on the tangent space of this fixed point produces a map
\[ 
B\!\Diff(\mathbb C P^m,\ast) \to B\!\GL_{2m}(\Reals),
\]
and it is evident that the composition of these two maps is the canonical map $B\!\GL_m(\mathbb C) \to B\!\GL_{2m}(\Reals)$ induced from seeing $\mathbb C$ as a $2$-dimensional real vector space. We now choose a complex $m$-dimensional vector bundle over $S^{2m}$, classified by a map 
$
S^{2m} \to B\!\GL_m(\mathbb C),
$
whose underlying $2m$-dimensional real vector bundle $\xi$ has a non-zero Euler number. 
When composed with the previous map, we obtain a map classifying a smooth bundle $\mathbb C P^m \to E \to S^{2m}$. 
\end{construction}

\begin{prop}\label{prop:p_1^n}
	If $m\ge 3$, then the bundle from Construction \ref{construction_bundle} satisfies  and we have:
	\begin{align*}
		p_1(E)^m &\neq 0\\
		p_i(E) &= \binom{m+1}{i}\cdot\left(\frac{1}{m+1}\right)^i\cdot p_1(E)^i\quad\text{ for } 2i<m
	\end{align*}
\end{prop}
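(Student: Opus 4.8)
The plan is to compute the relevant characteristic classes of the bundle $\mathbb{C}P^m \to E \to S^{2m}$ directly from Construction \ref{construction_bundle}. First I would analyze the cohomology of $E$. Since $E$ is the projectivization $\mathbb{P}(\underline{\mathbb{C}} \oplus \eta)$ of the rank-$(m+1)$ complex bundle $\underline{\mathbb{C}} \oplus \eta$ over $S^{2m}$ (where $\eta$ is the chosen complex $m$-plane bundle whose realification $\xi$ has nonzero Euler number), the Leray--Hirsch theorem applies: $H^*(E;\mathbb{Q})$ is a free module over $H^*(S^{2m};\mathbb{Q})$ on $1, t, t^2, \dots, t^m$, where $t = c_1(\mathcal{O}(1))$ is the first Chern class of the tautological line bundle. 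The single relation is the projective bundle relation $\sum_{j=0}^{m+1} c_j(\underline{\mathbb{C}}\oplus\eta)\, t^{m+1-j} = 0$; since $c_j(\eta) = 0$ for $0 < j < m$ (as $\tilde H^{2j}(S^{2m}) = 0$) and $c_m(\eta) = e(\xi) =: a \neq 0$ generates $H^{2m}(S^{2m};\mathbb{Q})$, this relation becomes $t^{m+1} + a\, t\, u = 0$, where $u \in H^{2m}(E;\mathbb{Q})$ is the pullback of the generator of $H^{2m}(S^{2m})$. Note $u^2 = 0$ and $t u$ is (a nonzero multiple of) the top class, so $\langle t^{m+1}, [E]\rangle = -a \cdot(\text{something nonzero}) \neq 0$; more precisely $t^{m+1} = -a\,tu$ and $tu$ evaluates nontrivially on $[E]$.

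Next I would compute the vertical tangent bundle. The vertical tangent bundle of a projective bundle $\mathbb{P}(V) \to B$ fits into the Euler sequence $0 \to \underline{\mathbb{C}} \to \pi^*V \otimes \mathcal{O}(1) \to T_\pi \to 0$, so here $T_\pi E \oplus \underline{\mathbb{C}} \cong (\underline{\mathbb{C}} \oplus \pi^*\eta) \otimes \mathcal{O}(1)$ as complex bundles. Hence the total Chern class is $c(T_\pi E) = (1+t)\prod (1 + t + \pi^* y_i)$ where the $y_i$ are the Chern roots of $\eta$; expanding and using that all elementary symmetric functions in the $y_i$ of degree strictly between $0$ and $m$ vanish, and that products $u^2 = 0$, one gets $c(T_\pi E) = (1+t)^{m+1} + (\text{terms involving } u)$. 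Using $TE \cong_{\mathrm{st}} T_\pi E \oplus \pi^* TS^{2m}$ and $p(TS^{2m}) = 1$, the Pontryagin classes of $E$ are those of $T_\pi E$. The class $p_1(TE)$ works out to $(m+1)t^2 + (\text{multiple of } u)$; working modulo $u$ (valid in degrees $< 2m$ since $u$ has degree $2m$ and $u$-terms only appear in degree $\geq 2m$), $p_i(TE) \equiv p_i((1+t)^{m+1}\text{-type bundle}) $, which is a fixed multiple of $t^{2i}$, giving the stated formula $p_i(E) = \binom{m+1}{i}(m+1)^{-i} p_1(E)^i$ for $2i < m$.

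Finally, for $p_1(E)^m \neq 0$: one has $p_1(TE) = (m+1)t^2 + (\text{correction in degree} \leq 2)$, and the correction vanishes because $H^2(E;\mathbb{Q})$ is one-dimensional, spanned by $t$ (as $2 < 2m$ forces the $S^{2m}$-direction not to contribute), so the correction is actually zero: $p_1(TE) = (m+1)t^2$ exactly when $m \geq 2$. Then $p_1(E)^m = (m+1)^m t^{2m}$, and I must show $t^{2m} \neq 0$ in $H^{4m}(E;\mathbb{Q})$. Using the relation $t^{m+1} = -a\, tu$ repeatedly: $t^{2m} = t^{m-1}\cdot t^{m+1} = -a\, t^{m-1}\cdot tu = -a\, t^m u$; and $t^m u$ is a nonzero multiple of the fundamental class of $E$ by Leray--Hirsch (since $t^m$ generates the top of the fiber and $u$ generates the base). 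This requires $m - 1 \geq 1$, i.e. $m \geq 2$; the hypothesis $m \geq 3$ in the statement is presumably what is needed for the companion formula (the constraint $2i < m$ to be nonvacuous with $i \geq 2$) or matches the application in Theorem \ref{thm:main}(iii). The main obstacle I anticipate is bookkeeping the $u$-correction terms carefully enough to be sure they genuinely vanish in the low-degree range and genuinely don't interfere with the top-degree computation — i.e., making the "work modulo $u$" argument rigorous by tracking degrees — but this is routine once the Leray--Hirsch basis and the relation $t^{m+1} = -a\,tu$ are in hand.
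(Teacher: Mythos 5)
Your proposal is correct, but it takes a genuinely different route from the paper. You first identify the total space of Construction \ref{construction_bundle} explicitly as the projectivization $\mathbb{P}(\underline{\mathbb{C}}\oplus\eta)$ of the chosen complex $m$-plane bundle $\eta$ over $S^{2m}$, and then compute everything via Leray--Hirsch, the projective bundle relation $t^{m+1}+a\,tu=0$ (using $c_j(\eta)=0$ for $0<j<m$ and $c_m(\eta)=e(\xi)=a\cdot u$), and the relative Euler sequence for the vertical tangent bundle; this gives $p_1(E)=(m+1)t^2$, $p_1(E)^m=(m+1)^m t^{2m}=-a(m+1)^m t^m u\neq0$, and the formula for $p_i(E)$ by pure degree bookkeeping. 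The paper instead works with the Serre spectral sequence: it picks the class $\beta\in H^2(E)$ restricting to the fiber generator, argues via Poincar\'e duality that $\beta^{2m}$ evaluates to (a nonzero multiple of) the Euler number of $\xi$ (essentially a self-intersection argument for the fixed-point section, whose normal bundle is $\xi$), and deduces the $p_i$-formula from $j^*p_i(E)=p_i(\mathbb{C}P^m)$ together with injectivity of $j^*$ in degrees below $2m-1$. Your approach buys a completely explicit cohomology ring and Chern/Pontryagin classes of $E$ (and would in fact determine \emph{all} Pontryagin numbers of $E$, not just the ones stated), at the cost of having to justify the identification $E\cong\mathbb{P}(\underline{\mathbb{C}}\oplus\eta)$; the paper's argument is shorter but more implicit. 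One small slip: your claim that $p_1(TE)=(m+1)t^2$ holds ``exactly when $m\ge2$'' is false for $m=2$ (there $c(T_\pi E)=(1+t)^3+au(1+t)$ gives $p_1=3t^2-2au$), and your stated reason (one-dimensionality of $H^2(E)$) addresses the wrong degree --- the potential correction sits in degree $4$, and is excluded because $\deg u=2m>4$, which needs $m\ge3$; since the proposition assumes $m\ge3$, this does not affect the validity of your argument.
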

\begin{proof} Choose a generator $\alpha \in \oH^2(\mathbb C P^m)$. Then there exists a unique class $\beta \in \oH^2(E)$ that pulls back to $\alpha$ along the inclusion $j \colon \mathbb C P^m \to E$ of the fiber. From the Serre spectral sequence of the bundle  $\mathbb C P^m \to E \xrightarrow{\pi} S^{2m}$ which collapses since the $E^2$ page is supported in even degrees, we see that $\beta^m$ is Poincar\'e dual to a non-zero multiple of $\pi^{\ast} [S^{2m}]$, where $[S^{2m}] \in \oH_{2m}(S^{2m})$ denotes the fundamental class. We thus get that $\beta^{2m} \in \oH^{4m}(E)$ is Poincar\'e dual to the Euler number of $\xi$, and hence non-zero. Since $S^{2m}$ has a trivial tangent bundle, we deduce $j^{\ast} p_1(E) = p_1(\mathbb C P^m) = (m+1)\alpha^2$, hence $p_1(E) = (m+1)\beta$. Hence indeed $p_1^m(E) = (m+1)^{2m}\beta^{2m} \neq 0$.

The class $p_i(E)$ is computed as follows:
\begin{align*}
	j^*p_i(E) &= p_i(\cp{2m}) = \binom{m+1}{i} a^{2i} = \binom{m+1}{i}\left(\frac{1}{m+1}\right)^i ((m+1)a^{2})^i\\
		&= \binom{m+1}{i}\left(\frac{1}{m+1}\right)^i p_1(\cp{2m})^i = \binom{m+1}{i}\left(\frac{1}{m+1}\right)^ij^*p_1(E)^i
\end{align*}
Since $j^*$ is injective in degrees smaller than $2m-1$, the claim follows.
\end{proof}

\begin{proof}[Proof of \pref{Theorem}{thm:main} (iii)]
	Consider the bundle $\cp{k}\to E\to S^{2k}$ constructed  in \pref{Proposition}{prop:p_1^n}. Taking the product with $\cp{2\ell}$ we obtain a fiber bundle $\tilde E\coloneqq E\times \cp{2\ell}\to S^{2k}$ with fiber $\cp{2\ell}\times \cp{n}$. We have
	\begin{align*}
		p_1(T\tilde E)^{k+\ell} &= (p_1(T\cp{n})\times 1 + 1\times p_1(TE))^{k+\ell}\\
		&= \binom{k+\ell}{k} p_1(T\cp{n})^\ell\times  p_1(TE)^{k}\not=0
	\end{align*}
\end{proof}

\noindent We next prove part (ii) of \pref{Theorem}{MainThmAppendix}. 

\begin{proof} Assume that $5 \le k \le 8$ and $m > k$. Then \pref{Corollary}{cor:mainrephrased} says that $c^k(m) \le 1$. We want to improve this to $c^k(m) = 0$. To do so, observe that the proof of \pref{Corollary}{cor:mainrephrased}, which was simply a reformulation of (the sharpness of the upper bound of) \pref{Theorem}{main:upperbound}, only involved bundles that are trivial as fibrations. For any such bundle, we know from \pref{Proposition}{prop:fiberhomotopicallytrivialbundles} that $p_1^m(E) = 0$. However, the bundle arising from \pref{Construction}{construction_bundle} satisfies $p_1^m(E) \neq 0$, we have thus found another element in $A^k_{4m}$ which is not fiber homotopy trivial and so we have finished the proof.
\end{proof}

\noindent Finally, we prove part (iii) of \pref{Theorem}{MainThmAppendix}.

\begin{proof}
For $i=1,\dots, m$, let $E_i\to S^{2i}$ denote the $\cp i$-bundle from \pref{Construction}{construction_bundle}. First note that for $i\ge5$, we have
\begin{align*}
	p_2(E_i) = (j^*)^{-1}p_2(\cp{2i}) = \frac i2 (j^*)^{-1}p_1(\cp{2i})^2 = \frac i2 p_1(E_i)^2
\end{align*}
Next, let $Q_i$ be a manifold of dimension $4(m-i)$ such that $p_1^{m-i}(Q_i)\not=0$ is the only non-vanishing Pontryagin number and let $X_i\coloneqq E_i\times Q_i$. Note, that $X_i$ is a fiber bundle with fiber $\cp{2i}\times Q_i$ over $S^{2i}$. We consider the following matrix:
\[B^m\coloneqq\Bigl(p_2^j(X_i)\cdot p_1^{m-2j}(X_i)\Bigr)_{\begin{mysubarray}&i&=6\dots m\\ &j&=0\dots\floor{\frac{m}{2}}\end{mysubarray}}\]
If $\rk(B^m) \ge \floor{\frac m2} +1 - a$, then $c^k(m) \le a$ for $k\le 12$. 
\begin{align*}
	p_2^j(X_i)&\cdot p_1^{m-2j}(X_i) = p_2^j(E_i\times Q_i)\cdot p_1^{m-2j}(E_i\times Q_i)\\
		={}& \Bigl(p_2(E_i) + p_1(E_i)p_1(Q_i) + p_2(Q_i)\Bigr)^j \cdot \Bigl(p_1(E_i) + p_1(Q_i)\Bigr)^{m-2j}
\end{align*}
By our choice of $Q_i$, any product containing $p_2(Q_i)$ will vanish and therefore we can go on with our computation.
\begin{align*}
		={}& \Bigl(p_2(E_i) + p_1(E_i)p_1(Q_i)\Bigr)^j \cdot \Bigl(p_1(E_i) + p_1(Q_i)\Bigr)^{m-2j} \\
		={}& p_1(Q_i)^{m-i}\cdot \sum_{n=0}^{\floor{\frac i2}} p_2(E_i)^n\cdot p_1(E_i)^{j-n}\cdot p_1(E_i)^{i-(j+n)}\cdot \binom{j}{n}\binom{m-2j}{i-(j+n)}\\
		={}&p_1(Q_i)^{m-i}\cdot\sum_{n=0}^{\floor{\frac i2}} \left(\frac{i}{2}\right)^{n}\cdot \binom{j}{n}\binom{m-2j}{i-(j+n)} p_1(E_i)^{2n}\cdot p_1(E_i)^{j-n}\cdot p_1(E_i)^{i-(j+n)}\\
		={}& \underbrace{\frac{p_1(Q_i)^{m-i}\cdot p_{1}(E_i)^i}{2^m}}_{\not=0}\cdot  \sum_{n=0}^{\floor{\frac i2}} 2^{m-n}\ i^{n}\ \binom{j}{n}\binom{m-2j}{i-(j+n)} \\
\end{align*} 
and hence it suffices to compute or estimate the rank of the following matrix:
\[A^m = (A^m_{ij})_{\begin{mysubarray}&i&=6\dots m\\ &j&=0\dots\floor{\frac{m}{2}}\end{mysubarray}} \coloneqq \left(\sum_{n=0}^{\floor{\frac i2}} 2^{m-n}\ i^{n}\ \binom{j}{n}\binom{m-2j}{i-(j+n)}\right)_{\begin{mysubarray}&i&=6\dots m \\ &j&=0\dots\floor{\frac{m}{2}}\end{mysubarray}}\]
Note that for $j>i$ we have $\binom{m-2j}{i-(j+n)}=0$ for all $n\ge0$ and hence $A_{ij}=0$. Therefore the matrix $A$ has the following form, where the asterisks represent non-zero entries.
\[A = \begin{pmatrix}
			\huge * &\dots & * & 0 & \dots & 0\\
			 & & & \ddots & \ddots & \vdots\\
			\vdots & & &  & * & 0\\
			 & & & & & * \\
			\vdots & & & & & \vdots \\
			* & & \dots & \dots  & &  *
	\end{pmatrix}\]
	In the first row, there are $7$ non-zero entries, so the rank of $A$ is at least $\floor{\frac{m}{2}}-5$.
\end{proof}

\begin{rem}\label{rem:computer-calculations}
	Computer calculations, for which we thank Marek Kaluba, have shown that the matrix $A^m$ and hence the matrix $B^m$ as well have rank equal to $\floor{\frac{m}{2}}+1$ for $m\le 500$. This implies that $c^k(m)=0$ for $k\le12$ and $m\le 500$ which can be rephrased in the following way: For every $k\le12$ and any oriented manifold $M$ of dimension at most $2000$ with vanishing signature, there exists a $\lambda\in\bbN$ such that the $\lambda$-fold connected sum of $M$ with itself is cobordant to a fiber bundle $E\to S^k$.
\end{rem}

\vspace{15pt}
{\footnotesize 
\noindent 
Jens Reinhold was supported by the DFG (German Research Foundation) -- SFB 1442 427320536, Geometry: Deformations and Rigidity, as well as under Germany's Excellence Strategy EXC 2044 390685587, Mathematics M\"unster: Dynamics-Geometry-Structure.
\newline \newline
\noindent 
\address{Mathematisches Institut, Einsteinstr.~62, 48149 M{\"u}nster, Germany}
\newline
\noindent 
\texttt{jens.reinhold@uni-muenster.de \\ jens.reinhold@posteo.de}
}

\begin{changemargin}{-2cm}{-2cm}
\bigskip
\printbibliography
\bigskip
\end{changemargin}

\end{document}